\documentclass[reqno,fleqn]{amsart}
\usepackage{amsfonts,amsthm,amssymb}
\usepackage{lipsum}
\usepackage[utf8]{inputenc}
\usepackage[english]{babel}

\usepackage{lmodern,textcomp}
\usepackage{amsmath}
\usepackage{mathtools}
\usepackage{latexsym}
\usepackage{tikz}
\usepackage{fancyvrb}
\usepackage{epsfig}
\usepackage{pstricks,slashbox,multirow}
\usepackage{graphicx}
\usepackage{rotating}
\usetikzlibrary{positioning}
\usepackage{subcaption}
\usepackage{datetime}
\newtheorem{theorem}{Theorem}[section]

\newtheorem{lemma}[theorem]{Lemma}

\newtheorem{cor}[theorem]{Corollary}
\newtheorem{definition}[theorem]{Definition}

\newtheorem{prm}[theorem]{Problem}
\newtheorem{oprm}{Open Problem}

\newtheorem{rem}[theorem]{Remark}

\title[A study on Type-2 isomorphic $C_n(R)$: Part 6: Abelian groups $(V_{n,m}(C_n(R)), \circ)$ and $(T2_{n,m}(C_n(R)), \circ)$]{A study on Type-2 isomorphic circulant graphs. \\ Part 6: Abelian groups $(T2_{n,m}(C_n(R)), \circ)$ and $(V_{n,m}(C_n(R)), \circ)$}

\author{\sc Vilfred Kamalappan} 
\address{Department of Mathematics, Central University of Kerala, Periye, Kasaragod, Kerala, India - 671 316.}
\email{vilfredkamal@gmail.com}  

\subjclass[2010]{05C60, 05C25, 05C75.}

\keywords{Circulant graph, Cayley Isomorphism (CI) property, Type-1 isomorphism, Type-2 isomorphism, Type-1 group of $C_{n}(R)$, Type-2 group of $C_{n}(R)$ w.r.t. $m$, $(V_{n,m}(C_n(R)), ~\circ)$.}

\date{}

\begin{document} 

\begin{abstract} This study is the $6^{th}$ part of a detailed study on Type-2 isomorphic circulant graphs having ten parts \cite{v2-1}-\cite{v2-10}. In this part, we define $V_{n,m}(C_n(R))$ and Type-2 set $T2_{n,m}(C_n(R))$ of $C_n(R)$ and present their properties. We prove that $(V_{n,m}(C_n(R)), \circ)$ is an Abelian group and $(T2_{n,m}(C_n(R)), \circ)$ is a subgroup of $(V_{n,m}(C_n(R)), \circ)$ where $T2_{n,m}(C_n(R))$ =  $\{C_n(R)\}$ $\cup$ $\{C_n(S):$ $C_n(S)$ is Typ-2 isomorphic to $C_n(R)$ w.r.t. $m \}$ and $(T2_{n,m}(C_n(R)), \circ)$ is the Type-2 group of $C_n(R)$ w.r.t. $m$. We also present many examples of Type-1 and Type-2 groups where $T1_{n}(C_n(R))$ = $\{C_n(xR): x\in\varphi_{n}\}$ is the Type-1 set of $C_n(R)$ and $(T1_{n}(C_n(R)), \circ')$ is its Type-1 group. 
\end{abstract}

\maketitle

	
\section{Introduction}

This study is the $6^{th}$ part of a detailed study on Type-2 isomorphic circulant graphs by Vilfred and Wilson \cite{v2-1}-\cite{v2-10} having ten parts. We call Adam's isomorphism as Type-1 isomorphism \cite{ad67}. Vilfred \cite{v17}-\cite{v20} defined set $T1_{n}(C_n(R))$ = $Ad_{n}(C_n(R))$ = $\{T1_{n,x}(C_n(R))$ = $\varphi_{n,x}(C_n(R))$ = $C_n(xR): x\in\varphi_n\}$ and proved that $(T1_{n}(C_n(R)), \circ')$ is an Abelian group under composition of mappings. He also defined Type-2 isomorphism, different from Type-1 isomorphism, of circulant graphs and developed its theory \cite{v2-1}. In this paper, we define $V_{n,m}(C_n(R))$ and Type-2 set $T2_{n,m}(C_n(R))$ of $C_n(R)$ and present their properties. We prove that $(V_{n,m}(C_n(R)), \circ)$ is an Abelian group and $(T2_{n,m}(C_n(R)), \circ)$ is a subgroup of $(V_{n,m}(C_n(R)), \circ)$ where $T2_{n,m}(C_n(R))$ =  $\{C_n(R)\}$ $\cup$ $\{C_n(S):$ $C_n(S)$ is Typ-2 isomorphic to $C_n(R)$ w.r.t. $m \}$ and $(T2_{n,m}(C_n(R)), \circ)$ is the Type-2 group of $C_n(R)$ w.r.t. $m$. We also present many examples of Type-1 and Type-2 groups where $T1_{n}(C_n(R))$ = $\{C_n(xR): x\in\varphi_{n}\}$ is the Type-1 set of $C_n(R)$ and $(T1_{n}(C_n(R)), \circ')$ is its  Type-1 group. For basic definitions and results on isomorphic circulant graphs refer  \cite{v24, v2-1}. 

\section{Preliminaries}

We present here a few definitions and results that are required in the subsequent sections.

\begin{theorem}{\rm\cite{v17}  \quad \label{a1}  In $C_n(R)$, the length of a cycle of period $r$ is $\frac{n}{\gcd(n,r)}$ and the number of disjoint periodic cycles of period $r$ is $\gcd(n,r)$, $r \in R$. \hfill $\Box$}
\end{theorem}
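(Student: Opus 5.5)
We model $C_n(R)$ as the Cayley graph on $\mathbb{Z}_n$ with vertices $0,1,\dots,n-1$, where $i$ and $j$ are adjacent iff $j-i \equiv \pm r \pmod n$ for some $r\in R$. The edges of period $r$ are then exactly the edges $\{i, i+r\}$, $i\in\mathbb{Z}_n$, so we can restrict attention to the spanning subgraph $G_r$ formed by them. The plan is to show that $G_r$ is a disjoint union of cycles, identify each cycle with a coset of a cyclic subgroup of $\mathbb{Z}_n$, and then count.

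First, fix $r\in R$ and let $H=\langle r\rangle$ be the cyclic subgroup of the additive group $\mathbb{Z}_n$ generated by $r$. By the standard fact about cyclic groups, $H=\langle d\rangle$ where $d=\gcd(n,r)$, and so $|H| = n/d$. Next, starting at any vertex $i$, the walk $i, i+r, i+2r,\dots$ visits precisely the coset $i+H$. Since the order of $r$ in $\mathbb{Z}_n$ is $n/d$, it returns to $i$ for the first time after exactly $n/d$ steps, and all vertices before that return are distinct. This gives a closed walk of length $n/d$ with distinct vertices, i.e.\ a cycle of period $r$ of length $n/\gcd(n,r)$.

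Second, the cycles through $i$ and $j$ coincide when $j\in i+H$ and are vertex-disjoint otherwise, because cosets partition $\mathbb{Z}_n$. Hence the number of disjoint periodic cycles of period $r$ equals the index $[\mathbb{Z}_n:H] = n/(n/d) = d = \gcd(n,r)$. Concretely, the cosets $0+H,1+H,\dots,(d-1)+H$ serve as representatives, which matches the fact that $H=\langle d\rangle$.

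The main obstacle is the degenerate case, where we need the convention that the structure counts as a ``cycle.'' If $n/d=2$, i.e.\ $r=n/2$, each ``cycle'' is a single edge traversed back and forth, so the statement should be read with that convention: length $2$, and $n/2$ such components. If $n/d = 1$ then $r\equiv 0$, which is excluded since $R\subseteq\{1,\dots,\lfloor n/2\rfloor\}$. I would note these conventions explicitly, and otherwise the argument reduces to the order of an element in $\mathbb{Z}_n$.
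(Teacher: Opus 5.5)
Your proof is correct and complete. The paper gives no proof of this statement: it is quoted from \cite{v17} and closed with $\Box$, so there is no in-paper argument to compare yours against.

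Your argument is the standard one. The walk $i, i+r, i+2r,\dots$ traces the coset $i+\langle r\rangle$, which has exactly $n/\gcd(n,r)$ elements, and the $\gcd(n,r)$ cosets partition $\mathbb{Z}_n$ into vertex-disjoint cycles. It matches the orbit-under-repeated-addition viewpoint the paper relies on when it invokes Corollary~\ref{a2} in the proof of Lemma~\ref{a4}.

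Your separate treatment of $r=n/2$ is worth keeping. There each ``cycle'' is a single edge, and the $n/2$ of them form a perfect matching, a convention the statement leaves implicit.
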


\begin{cor}{\rm \cite{v17} \quad \label{a2}  In $C_n(R)$, length of a cycle of period $r$ is $n$ if and only if $\gcd(n,r)$ = $1$, $r \in R$. \hfill $\Box$}
\end{cor}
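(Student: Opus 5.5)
Corollary \ref{a2} follows from Theorem \ref{a1} by a short two-way argument. I will use only the cycle-length formula: in $C_n(R)$, a periodic cycle of period $r\in R$ has length $\frac{n}{\gcd(n,r)}$.

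For the forward direction, suppose the cycle of period $r$ has length $n$. Then $\frac{n}{\gcd(n,r)} = n$. Since $n\ge 1$ and $\gcd(n,r)$ is a positive divisor of $n$, dividing through gives $\gcd(n,r)=1$.

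For the converse, suppose $\gcd(n,r)=1$. Then Theorem \ref{a1} gives the length as $\frac{n}{1}=n$. As a consistency check, Theorem \ref{a1} also says there is exactly $\gcd(n,r)=1$ periodic cycle of period $r$. So the edges of period $r$ form a single Hamiltonian cycle $0, r, 2r, \dots, (n-1)r \pmod n$, as expected.

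The only point needing care is the degenerate case $r = n/2$ with $n$ even, where the "cycle" of period $r$ reduces to the single edge $\{0, n/2\}$ traversed back and forth. Theorem \ref{a1} is stated for all $r\in R$, so I will simply use its conventions. Then $\gcd(n,n/2)=n/2$, which equals $1$ only when $n=2$, and that case is consistent with the formula. The main (mild) obstacle is therefore only that the equivalence rests entirely on the formula of Theorem \ref{a1}, which I assume as given.
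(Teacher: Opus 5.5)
Your proposal is correct and follows the intended argument: the paper states this as an immediate corollary of Theorem~\ref{a1} with no separate proof. The equivalence is just that $\frac{n}{\gcd(n,r)} = n$ if and only if $\gcd(n,r)=1$, as you show. Your remark on the case $r=\frac{n}{2}$ is harmless but unnecessary, since only the formula of Theorem~\ref{a1} is used.
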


\begin{theorem}{\rm \cite{v17} \quad \label{a3} If $C_n(R)$ and $C_n(S)$ are isomorphic, then there exists a bijection $f$ from $R \to S$ such that $\gcd(n, r)$ = $\gcd(n, f(r))$ for all $r\in R$.  \hfill $\Box$  }
\end{theorem}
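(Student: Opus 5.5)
The plan is to encode the statement as an equality of counting functions and then obtain that equality from the spectrum of the adjacency matrix.

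\emph{Reduction to counting.} For a divisor $d$ of $n$ put
\[ N_d(R) = |\{r\in R : \gcd(n,r)=d\}|. \]
A bijection $f:R\to S$ with $\gcd(n,r)=\gcd(n,f(r))$ exists if and only if $N_d(R)=N_d(S)$ for every $d\mid n$. In that case we match the $d$-classes of $R$ and $S$ arbitrarily. So the theorem reduces to showing that each $N_d$ is an isomorphism invariant of $C_n(R)$. It is convenient to count in the symmetric set $\pm R\subseteq\mathbb Z_n$ and halve at the end. The one exception is $r=n/2$, which is its own negative; it is the only element of its class $d=n/2$, so this causes no trouble.

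\emph{Spectral encoding.} The adjacency matrix of $C_n(R)$ is circulant, with eigenvalues
\[ \lambda_j(R)=\sum_{r\in \pm R}\omega^{jr},\qquad \omega=e^{2\pi i/n},\ j\in\mathbb Z_n. \]
Isomorphic graphs are cospectral, so the multisets $\{\lambda_j(R)\}$ and $\{\lambda_j(S)\}$ coincide. Group the terms of $\lambda_j$ by $d=\gcd(n,r)$. The elements with a given $d$ are exactly the units of $\mathbb Z_n$ times $d$. Hence
\[ \lambda_j(R)=\sum_{d\mid n} N'_d\, c_{n/d}(j)+E_j, \]
where $N'_d$ counts elements of $\pm R$ with gcd $d$ and $c_{n/d}$ is a Ramanujan sum. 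Here $E_j$ is an error term that vanishes precisely when $\pm R$ is a union of full unit-orbits. That is not true in general, so this formula cannot be used directly.

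\emph{Galois-orbit sums.} Instead, sum $\lambda_j$ over the Galois orbit $\{j:\gcd(j,n)=e\}$. The Galois automorphisms $\sigma_a:\omega\mapsto\omega^a$ with $\gcd(a,n)=1$ permute that orbit, so the orbit sum equals
\[ \sum_{r\in\pm R} c_{n/e}(r)=\sum_{d\mid n}N'_d(R)\,c_{n/e}(d). \]
The matrix $\bigl(c_{n/e}(d)\bigr)_{d,e\mid n}$ is invertible by Möbius inversion. Therefore the vector $(N'_d)$ is determined by the vector of Galois-orbit sums of eigenvalues.

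\emph{Main obstacle.} Cospectrality only gives the multiset of eigenvalues; it does not say which eigenvalue sits at which index $j$. So I must show that an isomorphism preserves the orbit-indexed sums, not just the total multiset. I would try the following. First, conjugate by the permutation matrix $P$ realising the isomorphism, so that $A_S=PA_RP^{T}$. This carries the eigenspace of $A_R$ for each eigenvalue $\theta$ onto that of $A_S$. Second, observe that the $\theta$-eigenspace of a circulant matrix is spanned by characters $\chi_j$. Its intersection with the rational structure singles out the Galois orbits: the projection onto $\bigoplus_{\gcd(j,n)=e}\mathbb C\chi_j$ is a rational matrix, and it lies in the Schur ring generated by $A_R$. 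Third, use Wielandt's rational-closure theorem (the Schur–Wielandt principle) for this Schur ring to conclude that the isomorphism maps the $e$-th rational idempotent of $A_R$ to that of $A_S$. That gives equality of the orbit sums.

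If this step resists, my fallback is direct counting. Since $\gcd(n,r)=d$ means that $r$ generates the subgroup $d\mathbb Z_n$, I would try to count $N_d$ through closed walks restricted to cosets of $d\mathbb Z_n$, proceeding by induction on the divisor lattice of $n$. Either way, once $N_d(R)=N_d(S)$ is established for all $d$, the bijection $f$ is assembled class by class, as in the reduction step.
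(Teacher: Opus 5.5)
Your reduction is correct, but it only reformulates the theorem. The matrix $\bigl(c_{n/e}(d)\bigr)_{d,e\mid n}$ is invertible, so equality of the orbit sums $\Sigma_e=\sum_{\gcd(j,n)=e}\lambda_j$ for $R$ and $S$ is \emph{equivalent} to $N'_d(R)=N'_d(S)$ for all $d$. All of the content therefore sits in the step you call the main obstacle, and the mechanism you propose there fails. The projection $E_e$ onto $\bigoplus_{\gcd(j,n)=e}\mathbb{C}\chi_j$ is rational, but in general it is \emph{not} in the Schur ring (coherent closure) generated by $A_R$. For example, take $n$ composite and $C_n(R)=K_n$. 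That Schur ring is $\mathrm{span}\{I,J\}$, whose only idempotents are $0$, $I$, $J/n$ and $I-J/n$, and none of these equals $E_e$ for any divisor $e\neq n$ of $n$. Wielandt's principle cannot repair this. It gives closure under $X\mapsto aX$ for $\gcd(a,n)=1$, which means the partition of characters induced by the Schur ring is stable under $\chi\mapsto\chi^a$. But a Galois-stable class can mix characters of different orders: for $K_n$, all nontrivial characters form a single class. So nothing you cite separates the $\gcd$-classes of characters.

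The underlying issue is that an isomorphism need not respect the subgroups $d\mathbb{Z}_n$. For instance, the automorphism of $K_4=C_4(1,2)$ that fixes $0$ and swaps $1$ and $2$ sends $\pm R\cap 2\mathbb{Z}_4=\{2\}$ to $\{1\}$. What is true is this: an isomorphism fixing $0$ maps every subgroup $H$ that is a union of basic sets of the Schur ring of $C_n(R)$ onto a subgroup of the same order. Since $\mathbb{Z}_n$ is cyclic, that subgroup is $H$ itself, and you get $|R\cap H|=|S\cap H|$ directly. So in the cases where all $E_e$ do lie in the Schur ring, the spectral detour is unnecessary. In the remaining cases your argument gives no control at all over $|R\cap d\mathbb{Z}_n|$. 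Your fallback has the same defect: closed walks ``restricted to cosets of $d\mathbb{Z}_n$'' are invariants only if the isomorphism carries those cosets to cosets of the same subgroup, and you can guarantee that only in the easy case just described. The missing ingredient is an argument that controls the distribution of $\gcd(n,\cdot)$ inside basic sets that are not unions of unit orbits, for instance via the decomposition theory of Schur rings over cyclic groups. The proposal does not supply it. (The paper itself gives no proof to compare with; the statement is simply quoted from \cite{v17}.)
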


\section{Adam's Isomorphism or Type-1 Isomorphism and Type-1 set of $C_n(R)$}

In this section, we present our study on Adam's isomorphism or Type-1 isomorphism of circulant graph $C_n(R)$ and define Type-1 set of $C_n(R)$. 

\begin{lemma}{\rm \cite{v17} \quad \label{a4} Let $S \subseteq \mathbb{Z}_n$, $S \neq \emptyset$ and $x \in \mathbb{Z}_n.$ Define a mapping $\varphi_{n, x}:$ $S$ $\rightarrow$ $\mathbb{Z}_n$ $\ni$ $\varphi_{n, x}(s)$ = $xs$, $\forall$ $s \in S,$ under multiplication modulo $n$. Then $\varphi_{n, x}$ is bijective if and only if $S = \mathbb{Z}_n$ and $\gcd(n, x) = 1$. }
\end{lemma}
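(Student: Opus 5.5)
We read "bijective" as a bijection from $S$ onto the codomain $\mathbb{Z}_n$, and prove the two directions separately using only finite counting and elementary number theory.

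For the forward direction, suppose $\varphi_{n,x}$ is bijective. Surjectivity onto $\mathbb{Z}_n$ gives $|S| \ge n$. Since $S \subseteq \mathbb{Z}_n$, this forces $|S| = n$ and hence $S = \mathbb{Z}_n$.

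With $S = \mathbb{Z}_n$, surjectivity gives some $s \in \mathbb{Z}_n$ with $xs \equiv 1 \pmod n$. So $xs - 1 = kn$ for some integer $k$, and any common divisor of $n$ and $x$ divides $1$. Therefore $\gcd(n,x) = 1$.

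Alternatively, injectivity alone suffices. If $d = \gcd(n,x) > 1$, then $x \cdot (n/d) = (x/d)\,n \equiv 0 = x \cdot 0 \pmod n$. Here $n/d \neq 0$ in $\mathbb{Z}_n$ because $0 < n/d < n$, so $\varphi_{n,x}$ would fail to be injective.

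For the converse, let $S = \mathbb{Z}_n$ and $\gcd(n,x) = 1$.

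Injectivity: if $xs_1 \equiv xs_2 \pmod n$, then $n \mid x(s_1 - s_2)$. Since $\gcd(n,x) = 1$, Euclid's lemma gives $n \mid s_1 - s_2$, so $s_1 = s_2$ in $\mathbb{Z}_n$.

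Surjectivity: an injective map from the finite set $\mathbb{Z}_n$ to itself is surjective. One could also exhibit the inverse map $s \mapsto x^{-1}s$, where $x^{-1}$ comes from Bézout's identity $ax + bn = 1$.

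There is no real obstacle here. The only point needing care is the interpretation of the codomain. If the intended statement were about a bijection $S \to xS$, the condition $S = \mathbb{Z}_n$ would not be necessary: for example, $S = \{1\}$ with any $x$ gives a bijection onto its image. So we read the codomain as all of $\mathbb{Z}_n$, which is exactly what makes the cardinality argument $|S| = n$ work.
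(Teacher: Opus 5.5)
Your proof is correct. Its skeleton matches the paper's in two respects. Both use a cardinality argument to force $S=\mathbb{Z}_n$; the paper simply notes that a proper subset of $\mathbb{Z}_n$ cannot map bijectively onto $\mathbb{Z}_n$, which confirms your reading of the codomain. Both also reduce the gcd condition to distinctness of the multiples $0, x, 2x, \dots, (n-1)x$ modulo $n$.

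The genuine difference is how that distinctness is justified. The paper gives no number-theoretic argument. It appeals to Corollary \ref{a2} (via Theorem \ref{a1}), treating $0, x, 2x, \dots$ as the vertices of the periodic cycle of jump size $x$ in a circulant graph. That cycle has length $n/\gcd(n,x)$, so it has length $n$ exactly when $\gcd(n,x)=1$. You instead prove the needed facts directly:
\begin{itemize}
\item Euclid's lemma gives injectivity.
\item Finiteness of $\mathbb{Z}_n$ gives surjectivity.
\item Necessity of $\gcd(n,x)=1$ comes either from the witness $xs\equiv 1 \pmod n$ or from the explicit collision $x\cdot\frac{n}{d}\equiv 0 = x\cdot 0$.
\end{itemize}

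Your route is self-contained and makes explicit two steps the paper leaves implicit. First, $n$ distinct values in $\mathbb{Z}_n$ already give surjectivity. Second, the cited cycle-length fact is just the statement that the additive order of $x$ in $\mathbb{Z}_n$ is $n/\gcd(n,x)$. It also avoids the paper's slightly awkward framing, in which $x$ must be a jump size of some $C_n(R)$. The paper's route mainly buys thematic continuity, tying the lemma to the periodic-cycle structure of circulant graphs used throughout the series.
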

\begin{proof}\quad Here, we give a proof using a property of periodic cycles of a circulant graph $C_n(R)$ with jump size $x$. Let $S = \mathbb{Z}_n$. Then the numbers $0,$ $x,$ $2x,$ $3x,$ $\dots,$ $(n-1)x$, under multiplication modulo $n$, are all distinct if and only if $\gcd(n, x)$ = 1 since the cycle of period $x$ in $C_n(R),$ $x \in R$, is of length $n$ if and only if $\gcd(n, x)$ = 1, using Corollary \ref{a2}.
	
Conversely, if $S \neq \mathbb{Z}_n$, then $S$ is a proper subset of $\mathbb{Z}_n$ and so $\varphi_{n,x}$ is not a bijective mapping. Hence the result follows. 
\end{proof} 

Hereafter, unless otherwise it is mentioned in other way, we consider $\varphi_{n,x}$ with $\gcd(n,x)$ = 1 only. Let $\varphi_n$ = $\{ x \in \mathbb{Z}_n : \gcd(n, x) = 1 \}$. Clearly,  $( \varphi_n,~\circ)$ is an abelian group under the binary operation $\lq\circ\rq$,  multiplication modulo $n$.

\begin{definition}{\rm\cite{ad67}} \quad \label{a5} For $R =$ $\{r_1$, $r_2$, $\dots$, $r_k\}$ and $S$ = $\{s_1$, $s_2$, $\dots$, $s_k\}$, circulant graphs $C_n(R)$ and $C_n(S)$ are {\em Adam's isomorphic} if there exists a positive integer $x$ $\ni$ $\gcd(n, x)$ = 1 and $S$ = $\{xr_1$, $xr_2$, $\dots$, $xr_k\}_n^*$ where $<r_i>_n^*$, the {\it reflexive modular reduction} of a sequence $< r_i >$, is the sequence obtained by reducing each $r_i$ under modulo $n$ to yield $r_i'$ and then replacing all resulting terms $r_i'$ which are larger than $\frac{n}{2}$ by $n-r_i'.$  
\end{definition}

The following definition based on Lemma \ref{a4} is same as the above.

\begin{definition} {\rm \cite{v17}} \label{a51} Let $n\in\mathbb{N}$. For a subset $S$ of $\mathbb{Z}_n$ one can define the circulant graph $C_n(S)$ as the graph with vertex set $\mathbb{Z}_n$ and edge set $\{\{x, y\} | x,y\in \mathbb{Z}_n,$ $x-y\in S\}$. For an integer $k$ coprime to $n$, there is a very natural isomorphism between circulants $C_n(S)$ and $C_n(kS)$  namely the map $\varphi_k : x \mapsto kx$ (observe that $\varphi_k$ is an automorphism of the group $\mathbb{Z}_n$) where $kS$ = $\{ks : s\in S\}$. In this case, we say that $C_n(S)$ and $C_n(kS)$ are \emph{Adam's isomorphic} or \emph{Type-1 isomorphic} and we call the isomorphism as \emph{Type-1 isomorphism}.  
\end{definition}

\begin{definition}{\rm \cite{v17}} \label{a6} Let $Ad_n = \{\varphi_{n,x}: x\in \varphi_n\}$, $Ad_n(S) = \{\varphi_{n,x}(S): x\in \varphi_n\}$ = $\{xS: x\in \varphi_n\}$, $Ad_{n,x}(C_n(R))$ = $T1_{n,x}(C_n(R))$ = $\varphi_{n,x}(C_n(R))$ = $C_n(\varphi_{n,x}(R))$ = $C_n(xR)$, $x\in \varphi_n$ and $Ad_n(C_n(R)) = T1_n(C_n(R)) = \{\varphi_{n,y}(C_n(R)) = C_n(yR): y\in \varphi_n\}$ for sets $R,S \subseteq \mathbb{Z}_n$ where $\varphi_{n,x}(R)$ in $C_n(\varphi_{n,x}(R))$ is calculated under the reflexive modulo $n$. We call the set $T1_n(C_n(R))$ as {\em Type-1 set} of $C_n(R)$.  Define $``\circ'"$ in $Ad_n(C_n(R))$ such that $\varphi_{n,x} \circ' \varphi_{n,y}$ = $\varphi_{n,xy}$, $C_n(xR) \circ'  C_n(yR)$ = $C_n((xy)R)$ and $(\varphi_{n,x} \circ' \varphi_{n,y})(C_n(R))$ = $\varphi_{n,x}(C_n(R)) \circ' \varphi_{n,y}(C_n(R))$ $\forall$ $x,y \in \varphi_n$, under arithmetic modulo $n$. 
\end{definition}

Clearly, $Ad_n(C_n(R))$ is the set of all circulant graphs which are Type-1 isomorphic to $C_n(R)$ and $(Ad_n(C_n(R)), \circ' )$ = $(T1_n(C_n(R)), \circ' )$ is an Abelian group and we call it as the {\em Adam's group} or {\em Type-1 group of} $C_n(R)$ under $``\circ'"$. Moreover, $(\varphi_{n,x} \circ' \varphi_{n,y})(C_n(R))$ = $\varphi_{n,x}( \varphi_{n,y}(C_n(R)))$ = $(\varphi_{n,x}(C_n(yR)))$ = $C_n(x(yR))$ = $C_n((xy)R)$ = $C_n(xR)$ $\circ'$  $C_n(yR)$ = $\varphi_{n,x}(C_n(R))$ $\circ'$ $\varphi_{n,y}(C_n(R))$, $\forall$ $x,y\in \varphi_n$, $(xy)R$, $xR$ and $yR$ are calculated under reflexive modulo $n$ and $xy$ is calcuated under multiplication modulo $n$.

\begin{theorem} \quad \label{a7b} {\rm Let $Ad_n(C_n(R))$ = $\{\varphi_{n,x}(C_n(R)) = C_n(xR): x\in\varphi_n \}$. Then, $C_n(S)\in Ad_n(C_n(R))$ if and only if $Ad_n(C_n(R))$ = $Ad_n(C_n(S))$ if and only if $C_n(R)\in Ad_n(C_n(S))$. }
\end{theorem}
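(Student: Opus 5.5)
The plan is to prove the three-way equivalence as a cycle of implications: (i) $C_n(S)\in Ad_n(C_n(R))$ implies $Ad_n(C_n(R)) = Ad_n(C_n(S))$; (ii) $Ad_n(C_n(R)) = Ad_n(C_n(S))$ implies $C_n(R)\in Ad_n(C_n(S))$; (iii) $C_n(R)\in Ad_n(C_n(S))$ implies $C_n(S)\in Ad_n(C_n(R))$.

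Everything rests on two facts. First, $(\varphi_n,\circ)$ is a group under multiplication modulo $n$, so every $x\in\varphi_n$ has an inverse $x^{-1}\in\varphi_n$. Second, the Type-1 action is compatible with composition: $\varphi_{n,x}(\varphi_{n,y}(C_n(R))) = C_n((xy)R)$, as noted after Definition \ref{a6}. The only point needing care is that $xR$ is computed under the reflexive modular reduction of Definition \ref{a5}, i.e. up to sign. I will check once that $x(\pm r) = \pm(xr)$ modulo $n$. Hence replacing $r$ by $n-r$ does not change the reduced set, and $x(yR) = (xy)R$ holds as sets after reduction. This is the one step I expect to require any real verification; the rest is bookkeeping.

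For (i), suppose $S = yR$ with $y\in\varphi_n$. Then $Ad_n(C_n(S)) = \{C_n(x(yR)) : x\in\varphi_n\} = \{C_n((xy)R): x\in\varphi_n\}$. Since $x\mapsto xy$ is a bijection of $\varphi_n$ onto itself (it is right multiplication in a group), this set equals $\{C_n(zR): z\in\varphi_n\} = Ad_n(C_n(R))$.

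For (ii), note $C_n(R) = C_n(1\cdot R)\in Ad_n(C_n(R))$ because $1\in\varphi_n$, so $C_n(R)\in Ad_n(C_n(S))$ by the assumed equality.

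For (iii), suppose $R = xS$ with $x\in\varphi_n$. Then $x^{-1}R = x^{-1}(xS) = (x^{-1}x)S = S$, using the compatibility above. Since $x^{-1}\in\varphi_n$, this gives $C_n(S)\in Ad_n(C_n(R))$, which closes the cycle of implications.
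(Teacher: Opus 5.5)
Your proposal is correct and uses the same two ingredients as the paper's proof: the inverse $x^*\in\varphi_n$ and the compatibility $C_n(x(yR)) = C_n((xy)R)$. The paper proves the symmetric membership equivalence and then a double inclusion between $Ad_n(C_n(R))$ and $Ad_n(C_n(S))$, whereas you use a cycle of implications with the reindexing bijection $x\mapsto xy$ of $\varphi_n$; your explicit check that $x(n-r)\equiv n-xr \pmod{n}$ respects the reflexive reduction is a detail the paper leaves implicit.
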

\begin{proof}\quad Let $C_n(S)\in Ad_n(C_n(R))$. This implies, $\exists$ $x\in\varphi_n$ $\ni$ $C_n(S)$ = $\varphi_{n,x}(C_n(R))$ = $C_n(xR)$. We have, corresponding to $x\in \varphi_n$, $\exists$ $x^*\in \varphi_n$ $\ni$ $xx^*$ = $1\in\varphi_n$ and $\varphi_{n,x^*}(C_n(S))$ = $\varphi_{n,x^*}(C_n(xR))$ = $C_n(x^*(xR))$ = $C_n((x^*x)R)$ = $C_n(R)$. Thus, $C_n(R)$ = $\varphi_{n,x^*}(C_n(S))$ which implies, $C_n(R)\in Ad_n(C_n(S))$, $x^*\in \varphi_n$. Similarly, we can show that if $C_n(R)\in Ad_n(C_n(S))$, then $C_n(S)\in Ad_n(C_n(R))$.

Also, let $C_n(T)\in Ad_n(C_n(R))$ for some $T$. This implies, $\exists$ $y\in\varphi_n$ $\ni$ $C_n(T)$ = $\varphi_{n,y}(C_n(R))$ = $\varphi_{n,y}(\varphi_{n,x^*}(C_n(S)))$ = $\varphi_{n,y}(C_n(x^*S))$ = $C_n(y(x^*S))$ = $C_n((yx^*)S)$, $y,x^*,yx^*\in\varphi_n$. This implies, $C_n(T)\in Ad_n(C_n(S))$. This implies that $Ad_n(C_n(R))$ $\subseteq$ $Ad_n(C_n(S))$. Similarly, we can show that $Ad_n(C_n(S))$ $\subseteq$ $Ad_n(C_n(R))$. 

This implies, $Ad_n(C_n(R))$ = $Ad_n(C_n(S))$. Hence, we get the result.
\end{proof}

From (a2) of problem 2.1 in \cite{v2-4}, we get, $T1_{54}(C_{54}(1,17,18,19))$ = $\{\varphi_{54,x}(C_{54}(1,17,18,19)): x\in \varphi_{54}\}$ = $\{\varphi_{54,x}(C_{54}(1,17,18,19)): x = 1,5,7,11,13,17,19,23,25,29,31,35,37,41,43,47,49,53\}$ = $\{C_{54}(1,17$, $18,19),$ $C_{54}(5,13,18,23)$, $C_{54}(7,11,18,25)\}$ = $T1_{54}(C_{54}(5,13,18,23))$ = $T1_{54}(C_{54}(7,11,18,25))$. 

\vspace{.1cm}
\noindent
{\bf Notations.} \label{2.6} We introduce the following notations under Type-1 isomorphism of circulant graphs.

$C_n(R) \cong_{Ad_{n,x}} C_{n}(S)$ or $C_n(R) \cong_{T1_{n,x}} C_{n}(S)$ when 
$C_n(R)$ and $C_n(S)$ are Type-1 isomorphic and $Ad_{n,x}(C_n(R))$ (= $T1_{n,x}(C_n(R))$ = $\varphi_{n,x}(C_n(R))$ = $C_{n}(xR)$) = $C_n(S)$, $x\in\varphi_n$; and $C_n(R) \cong_{Ad_{n}} C_{n}(S)$ or $C_n(R) \cong_{T1_{n}} C_{n}(S)$ when $C_n(R)$ and $C_n(S)$ are Adam's isomorphic or Type-1 isomorphic. 

\section{Known results on isomorphic circulant graphs of Type-2 w.r.t. $m$ = 2,3,5,7} 

In 1996, Vilfred \cite{v96} defined Type-2 isomorphism of $C_n(R)$ w.r.t. $m$ $\ni$ $m$ = $\gcd(n, r) > 1$, $r\in R$ and $r,n\in\mathbb{N}$ and studied Type-2 isomorphic circulant graphs w.r.t. $m$ = 2  in \cite{v13,v20}. And with Wilson \cite{v24} obtained families of isomorphic circulant graphs of Type-2 w.r.t. $m$ = 3,5,7. In \cite{v2-2-arX}, Vilfred modified the definition of Type-2 isomorphism of circulant graphs $C_n(R)$ w.r.t. $m$ by considering $m > 1$ as a divisor of $\gcd(n, r)$ and $r\in R$ and in \cite{v2-1} and he further modified the definition to $r\in R$ and $m > 1$ and $m^3$ are divisors of $\gcd(n, r)$ and $n$, respectively. In this section, we present known results on isomorphic circulant graphs of Type-2 w.r.t. $m$ = 2,3,5,7.

\begin{lemma}{\rm \cite{v2-1} \quad \label{a7}  Let $m >1$ be a divisor of $n$. Then, for each value of $t$, the mapping $\theta_{n,m,t}: \mathbb{Z}_n \rightarrow \mathbb{Z}_n$ defined by $\theta_{n,m,t}(x)$ = $x+jtm$, $x \in \mathbb{Z}_n$, is bijective under arithmetic modulo $n$ where $x = qm+j,$ $0 \leq j \leq m-1,$ $0 \leq q,t \leq \frac{n}{m}-1$ and $j,m,q,t \in \mathbb{Z}_n$. In particular, the result is true when $m$ = $\gcd(n, r) > 1$ and $r \in \mathbb{Z}_n$. \hfill $\Box$} 
\end{lemma}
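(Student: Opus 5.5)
The approach is to show that $\theta_{n,m,t}$ is injective on the finite set $\mathbb{Z}_n$, which forces it to be bijective; the key observation is that $\theta_{n,m,t}$ fixes residues modulo $m$.

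Fix $t$ with $0\le t\le \frac{n}{m}-1$. Since $m \mid n$, every $x\in\mathbb{Z}_n$ has a unique representation $x = qm+j$ with $0\le j\le m-1$ and $0\le q\le \frac{n}{m}-1$. Here $j$ is just the residue of $x$ modulo $m$, and this is well defined on $\mathbb{Z}_n$ precisely because $m\mid n$. So $\theta_{n,m,t}(x) = x+jtm$ is a well-defined map $\mathbb{Z}_n\to\mathbb{Z}_n$. Note that $jtm$ is a multiple of $m$, and reduction modulo $n$ preserves residues modulo $m$. Hence
\[
\theta_{n,m,t}(x) \equiv x \equiv j \pmod m .
\]
In other words, $\theta_{n,m,t}$ maps each residue class $C_j=\{qm+j : 0\le q\le \frac{n}{m}-1\}$ into itself.

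Next I restrict to a single class $C_j$. On $C_j$, the map acts as the translation $x\mapsto x + (jt)m$ by the fixed amount $jtm$ in $\mathbb{Z}_n$, and translations in $\mathbb{Z}_n$ are injective. Concretely, if $\theta_{n,m,t}(x)=\theta_{n,m,t}(y)$, then reducing modulo $m$ gives $x\equiv y \pmod m$, so $x$ and $y$ share the same $j$. The shift $jtm$ then cancels, leaving $x=y$ in $\mathbb{Z}_n$. Thus $\theta_{n,m,t}$ is injective on $\mathbb{Z}_n$, and since $\mathbb{Z}_n$ is finite it is bijective. If an explicit inverse is wanted, it is $x\mapsto x - jtm$ with the same $j$; this inverse is well defined because $j$ is preserved.

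The special case $m=\gcd(n,r)>1$ with $r\in\mathbb{Z}_n$ follows immediately, since such an $m$ divides $n$.

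The only delicate point is the well-definedness of $j$ on $\mathbb{Z}_n$, which is where $m\mid n$ is essential; everything else is routine.
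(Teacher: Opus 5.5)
The paper gives no proof of this lemma: it is quoted from \cite{v2-1} and closed with $\Box$, so there is no in-paper argument to compare yours against. Your proof is correct and complete. You identify both places where $m\mid n$ is needed: the residue $j$ of $x$ modulo $m$ must be well defined on $\mathbb{Z}_n$, and reducing modulo $n$ must not change residues modulo $m$. With these, $\theta_{n,m,t}$ maps each class $C_j$ into itself, acting there as $q\mapsto q+jt \pmod{\frac{n}{m}}$, and injectivity on the finite set $\mathbb{Z}_n$ gives bijectivity. Your argument uses neither $m>1$ nor the restriction $0\le t\le\frac{n}{m}-1$.

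Your observation that $j$ is preserved is also useful later in the paper, because it gives
\[
\theta_{n,m,t}(\theta_{n,m,t'}(x)) = x+j(t+t')m = \theta_{n,m,t+t'}(x).
\]
Here $t+t'$ may be reduced modulo $\frac{n}{m}$, since $j\cdot\frac{n}{m}\cdot m = jn\equiv 0 \pmod n$. So the law $\theta_{n,m,t}\circ\theta_{n,m,t'}=\theta_{n,m,t+t'}$, which Definition~\ref{a19} simply imposes, is genuine composition of maps. Likewise, your explicit inverse $x\mapsto x-jtm$ is exactly $\theta_{n,m,\frac{n}{m}-t}$, the inverse used in property (c) and Theorem~\ref{ab19}.
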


\begin{definition} \cite{v2-1}\quad  \label{d4.2} Let $V(K_n) = \{u_0,u_1,u_2,...,u_{n-1}\}$, $V(C_n(R)) = \{v_0,v_1,v_2,...,$ $v_{n-1}\}$, $r\in R$, $|R| \geq 3$, $m > 1$ and $m$ and $m^3$ be divisors of $\gcd(n, r)$ and $n$, respectively.  Define one-to-one mapping $\theta_{n,m,t} :$ $V(C_n(R)) \rightarrow V(K_n)$ such that $\theta_{n,m,t}(v_x)$ = $u_{x+jtm}$,  $\theta_{n,m,t}((v_x, v_{x+s}))$ = $(\theta_{n,m,t}(v_x), \theta_{n,m,t}(v_{x+s}))$ under subscript arithmetic modulo $n$ and $\theta_{n,m,t}(C_n(R))$ = $C_n(\theta_{n,m,t}(R))$ for every $x$ = $qm+j \in \mathbb{Z}_n$, $s\in R$, $0 \leq j \leq m-1$, $0 \leq q,t \leq \frac{n}{m} -1$ and $\theta_{n,m,t}(R)$ in $C_n(\theta_{n,m,t}(R))$ is calculated under the reflexive modulo $n$. And for a particular value of $t,$ if  $\theta_{n,m,t}(C_n(R))$ = $C_n(S)$ for some $S$  and  $S \neq yR$ for all $y\in \varphi_n$ under reflexive modulo $n$, then $C_n(R)$ and $C_n(S)$ are called {\em isomorphic circulant graphs of Type-2 w.r.t. $m$} and the isomorphism as {\em Type-2 isomorphism w.r.t. $m$.} 

When $C_n(R)$ and $C_n(S)$ are Type-2 isomorphic w.r.t. $m$, then we also say that $C_{kn}(kR)$  and $C_{kn}(kS)$ are Type-2 isomorphic w.r.t. $m$, $k\in\mathbb{N}$. Here, $k.C_n(T)$ = $C_{kn}(kT)$, $k\in\mathbb{N}$.
\end{definition}

\begin{rem}\quad \label{r11} Following steps are used to establish isomorphism of Type-2 w.r.t. $m$ between circulant graphs $C_n(R)$ and $C_n(S)$. (i) $R \neq S$ and $|R|$ = $|S| \geq 3$; (ii) $\exists$ $r\in R,S$, $m > 1$ $\ni$ $m$ and $m^3$ are divisors of $\gcd(n, r)$ and $n$, respectively, and for some $t$ $\ni$ $1 \leq t \leq \frac{n}{m} -1$, $\theta_{n,m,t}(C_n(R))$ = $C_n(S)$ and (iii) $S \neq xR$ for all $x\in\varphi_n$ under arithmetic reflexive modulo $n$. 
\end{rem} 

\begin{rem} \label{r12} \quad While searching for possible value(s) of $t$ for which the transformed graph $\theta_{n,m,t}(C_n(R))$ is circulant of the form $C_n(S)$ for some $S \subseteq [1, \frac{n}{2}]$,  calculation on $r_i$s which are integer multiples of $m$ need not be done  under the transformation $\theta_{n,m,t}$ as there is no change in these $r_i$s where $m > 1$, $m$ and $m^3$ are divisors of $\gcd(n, r)$ and $n$, respectively, and $r\in R$. Also, for a given circulant graph $C_n(R)$, w.r.t. different values of $m$, we may get different Type-2 isomorphic circulant graphs.
\end{rem}

\begin{theorem}{\rm \cite{v2-1}}\quad \label{a17c} {\rm For $n \geq 2$, $1 \leq 2s-1 \leq 2n-1$, $n \neq 2s-1$, $R$ = $\{2,2s-1, 4n-(2s-1)\}$ and $S$ = $\{ 2,$ $2n-(2s-1)$, $2n+2s-1 \}$, $\theta_{8n,2,n}(C_{8n}(R))$ = $C_{8n}(S)$ = $\theta_{8n,2,3n}(C_{8n}(R)),$ $\theta_{8n,2,n}(C_{8n}(S))$ = $C_{8n}(R)$ = $\theta_{8n,2,3n}(C_{8n}(S))$ and circulant graphs $C_{8n}(R)$ and $C_{8n}(S)$ are Type-2 isomorphic  w.r.t. $m$ = 2. When $n$ = $2s-1$, the two circulant graphs are the same. \hfill $\Box$}
\end{theorem}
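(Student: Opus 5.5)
The plan is to work directly from Definition \ref{d4.2} with $m=2$ and $N=8n$, to use Remark \ref{r11} as the checklist, and to write $a=2s-1$ throughout. Then $R=\{2,a,4n-a\}$ and $S=\{2,2n-a,2n+a\}$. A useful observation is that $S$ has exactly the same shape as $R$ with $a$ replaced by $a'=2n-a$, since $4n-a'=2n+a$. So once $\theta_{8n,2,n}(C_{8n}(R))=C_{8n}(S)$ is proved for every admissible $a$, the reverse identity $\theta_{8n,2,n}(C_{8n}(S))=C_{8n}(R)$ follows by applying the same result to $a'$, because $2n-a'=a$. Here $a'$ is again odd, lies in $[1,2n-1]$, and satisfies $a'\neq n$.

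\emph{Step 1 (action of $\theta$).} For $m=2$ and $x=2q+j$, the map $\theta_{8n,2,t}$ fixes the even vertices and sends each odd vertex $x$ to $x+2t$. For $t=n$ this is a shift by $2n$; for $t=3n$ it is a shift by $6n\equiv -2n \pmod{8n}$.
\begin{itemize}
\item An edge of jump $2$ joins two vertices of the same parity, so its image is again an edge of jump $2$.
\item Take an edge $\{x,x+d\}$ with $d$ odd. If $x$ is even, its image is $\{x,x+d\pm 2n\}$. If $x$ is odd, its image is $\{x\pm 2n,x+d\}$, which has jump $d\mp 2n$.
\end{itemize}
Applying this to $d=a$ and $d=4n-a$ and reducing under reflexive modulo $8n$, every image jump lies in $\{2,\;2n-a,\;2n+a\}=S$. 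For instance, with $t=n$ the jump $4n-a$ goes to $6n-a\equiv 2n+a$ from an even lower endpoint, and to $2n-a$ from an odd one. The computation for $t=3n$ is symmetric.

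\emph{Step 2 (equality of edge sets).} The map $\theta$ is a bijection on vertices by Lemma \ref{a7}, so it maps distinct edges to distinct edges. The condition $a\ne n$ makes the three elements of $R$ distinct and the three elements of $S$ distinct, so $C_{8n}(R)$ and $C_{8n}(S)$ are both $6$-regular and have the same number of edges. By Step 1 the image edge set is contained in $E(C_{8n}(S))$, so the two edge sets are equal. A direct check also works: for each vertex $y$ and each $\sigma\in S$, one can exhibit the preimage of the edge $\{y,y+\sigma\}$ by splitting on the parity of $y$.

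\emph{Step 3 (not Type-1).} This is the step needing the most care. Suppose $S=xR$ for some $x\in\varphi_{8n}$; then $x$ is odd. The element $2$ is the only even element in both $R$ and $S$, so $2x\equiv\pm2 \pmod{8n}$. Hence $x\equiv\pm1 \pmod{4n}$, i.e.\ $x\in\{1,4n-1,4n+1,8n-1\}$.
\begin{itemize}
\item For $x=\pm1$ we have $xR=R$ under reflexive reduction.
\item For $x=4n\pm1$ and any odd $d$, we get $xd\equiv 4n\pm d \pmod{8n}$. So $a\mapsto 4n\pm a$, which is $4n-a$ after reflexive reduction, and $4n-a\mapsto a$. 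Again $xR=R$.
\end{itemize}
It therefore suffices to show $R\ne S$. Since $a$ and $2n-a$ lie in $[1,2n-1]$ while $4n-a$ and $2n+a$ lie in $[2n+1,4n-1]$, equality $R=S$ forces $a=2n-a$, i.e.\ $a=n$, which is excluded. The same comparison shows that $R=S$ precisely when $n=2s-1$, which gives the final sentence of the statement. Finally, $m=2$ divides $\gcd(8n,2)$, and $m^3=8$ divides $8n$, so condition (ii) of Remark \ref{r11} holds; the proposal rests on this reading of the divisibility conditions in Remark \ref{r11}.
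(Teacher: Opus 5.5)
\textbf{Verdict.} Your proof is correct and complete. The paper gives no proof of this statement: it is quoted from \cite{v2-1} and closed with $\Box$. So the natural comparison is with the computational style of the paper's worked examples (Tables 1 and 2, and Claims 1--2 in Problem \ref{p6.4}).

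\textbf{How the paper's examples work.} There $\theta_{n,m,t}$ is applied only to the symmetric jump set $R\cup(n-R)$, that is, to the neighbourhood of the fixed vertex $0$. One checks that the image is again symmetric and equals $S\cup(n-S)$. The step from this to $\theta_{n,m,t}(C_n(R))=C_n(S)$ implicitly relies on the symmetric equidistance criterion (Theorem 5.2 of \cite{v2-1}), quoted before property (d). Type-1 exclusion is done either by listing $T1_n(C_n(R))$ multiplier by multiplier, or, in Problem \ref{p6.4}, by tracking where the jump divisible by $m$ goes.

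\textbf{What your route does differently.}
\begin{enumerate}
\item Your parity split on every edge, together with the edge count, proves the graph identity outright. It needs no appeal to the equidistance criterion.
\item Your Step 3 is the rigorous form of the ``track the jump divisible by $m$'' device. You correctly notice that the multipliers $x\equiv\pm1\pmod{4n}$ do fix the jump $2$, and you dispose of them by showing they fix $R$. This makes the argument uniform in $n$ and $s$ rather than a case-by-case listing.
\item The relabelling $a\mapsto 2n-a$ is efficient. The group law alone gives $\theta_{8n,2,3n}(C_{8n}(S))=C_{8n}(R)$ from $\theta_{8n,2,3n}\circ\theta_{8n,2,n}=\theta_{8n,2,0}$. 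However, $\theta_{8n,2,n}(C_{8n}(S))=\theta_{8n,2,2n}(C_{8n}(R))$ would otherwise need a separate check that $\theta_{8n,2,2n}$ fixes $C_{8n}(R)$, and your symmetry supplies it directly.
\end{enumerate}

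\textbf{One small misattribution.} The three elements of $R$ (and of $S$) are distinct whether or not $a\neq n$:
\begin{itemize}
\item $2$ is the only even element;
\item $a=4n-a$ would force $a=2n$, which is impossible since $a$ is odd;
\item $2n-a\neq 2n+a$ because $a\neq 0$.
\end{itemize}
The hypothesis $a\neq n$ is used only to get $R\neq S$ in Step 3. This does not affect the validity of your argument.
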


\begin{theorem} {\rm \cite{v2-1}} \label{a17d} {\rm Let $n \geq 2$, $k \geq 3$, $1 \leq 2s-1 \leq 2n-1$, $n \neq 2s-1$, $R$ = $\{ 2s-1,$ $4n-(2s-1),$ $2p_1,$ $2p_2,$ $\dots,$ $2p_{k-2} \}$, $S$ = $\{2n-(2s-1),$ $2n+2s-1,$ $2p_1,2p_2,\dots,2p_{k-2}\}$, $2y\in R,S$, $\gcd(4n,y)$ = 1, $p_1,p_2,\dots,p_{k-2} \in \mathbb{N}$ and $\gcd(p_1,p_2,\dots,p_{k-2})$ = 1. Then, (i) $\theta_{8n,2,n}(C_{8n}(R))$ = $C_{8n}(S)$ = $\theta_{8n,2,3n}(C_{8n}(R))$, $\theta_{8n,2,n}(C_{8n}(S))$ = $C_{8n}(R)$ = $\theta_{8n,2,3n}(C_{8n}(S))$, $\theta_{8n,2,2n}(C_{8n}(R))$ = $C_{8n}(R)$, $\theta_{8n,2,2n}($ $C_{8n}(S))$ = $C_{8n}(S)$ and (ii) for given values of $n,s,p_1,p_2,\dots,p_{k-2}$ and $y$, $C_{8n}(R)$ and $C_{8n}(S)$ are isomorphic of either Type-1 or Type-2 w.r.t. $m$ = 2. Moreover, for given $n, s$ and $k$ and for all such possible values of $p_1,p_2,\dots,p_{k-2}$ and $y$, the set $\{ C_n(S)$ = $\theta_{8n,2,n}(C_{8n}(R)):$  $\gcd(p_1,p_2,\dots,p_{k-2})$ = 1, $p_1,p_2,\dots,p_{k-2} \in \mathbb{N}\}$ contains all isomorphic circulant graphs of $C_{8n}(R)$ of Type-2 w.r.t. $m$ = 2.      \hfill $\Box$}
\end{theorem}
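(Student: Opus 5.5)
The plan is to compute $\theta_{8n,2,t}(C_{8n}(R))$ explicitly, edge class by edge class, and then specialise to $t\in\{n,2n,3n\}$.

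\textbf{Part (i).} By Remark \ref{r12}, the even jumps $2p_1,\dots,2p_{k-2}$ (and $2y$) are unchanged by $\theta_{8n,2,t}$. The reason is that for $x=2q+j$ both $x$ and $x+2p_i$ have the same $j$, so both endpoints are shifted by the same amount $2jt$ and the difference is preserved. So only the two odd jumps $r_1=2s-1$ and $r_2=4n-(2s-1)$ need work.

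For an odd jump $a$, I will write the edge set $\{\{x,x+a\}: x\in\mathbb{Z}_{8n}\}$ as
\[
\{\{y,y+a\}: y \text{ even}\}\ \cup\ \{\{y,y-a\}: y \text{ even}\},
\]
which is possible because exactly one endpoint of each edge is even. Under $\theta_{8n,2,t}$ an even vertex $y$ is fixed and an odd vertex $z$ goes to $z+2t$. Hence the edge $\{y,y\pm a\}$ goes to $\{y,\,y\pm a+2t\}$.

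So after the map, the edges at even vertices have jumps $a+2t$ and $-(a-2t)$. The image graph is circulant exactly when, over all odd jumps, the multiset of such signed jumps at even vertices is symmetric under negation. In that case the image is $C_{8n}(S')$, where $S'$ collects the reflexive reductions.

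Now I take $t=n$:
\begin{itemize}
\item $a=2s-1$ gives $2n+2s-1$ and $-(2n-(2s-1))$;
\item $a=4n-(2s-1)$ gives $6n-(2s-1)\equiv -(2n+2s-1)$ and $-(2n-(2s-1))+4n\equiv 2n-(2s-1)$, working modulo $8n$.
\end{itemize}
These four values are $\pm(2n+2s-1)$ and $\pm(2n-(2s-1))$, so the image is $C_{8n}(S)$.

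For $t=3n$ the same computation applies with $6n\equiv-2n$; the roles of the two odd jumps are interchanged, and the image is again $C_{8n}(S)$.

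For $t=2n$ the shift is $4n$, and $2s-1\pm 4n\equiv \mp(4n-(2s-1))$. The two odd jumps of $R$ are therefore swapped with each other, so the image is $C_{8n}(R)$.

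The identical computation starting from $S$ handles $\theta(C_{8n}(S))$. The relevant identities are $(2n-(2s-1))+2n=4n-(2s-1)$ and $(2n+2s-1)-2n=2s-1$, which give back $R$. The hypothesis $n\neq 2s-1$ guarantees $R\neq S$.

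\textbf{Part (ii).} The hypotheses of Definition \ref{d4.2} hold:
\begin{itemize}
\item $|R|\ge 3$;
\item $m=2$ divides $\gcd(8n,2y)$ with $2y\in R$;
\item $2^3=8$ divides $8n$.
\end{itemize}
So $C_{8n}(R)\cong C_{8n}(S)$ by (i). By definition this isomorphism is of Type-2 w.r.t. $2$ unless $S=xR$ for some $x\in\varphi_{8n}$, in which case it is of Type-1. This gives the dichotomy.

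\textbf{The ``moreover'' clause.} This is the main obstacle. The plan is to run the general computation above for arbitrary $t$ with $1\le t\le 4n-1$. The image is circulant iff the set
\[
\{\pm(r+2t),\ \pm(r-2t): r\in\{2s-1,\ 4n-(2s-1)\}\}
\]
reduces to a genuine two-element jump set. I would show this forces
\[
2t\equiv 0,\ \pm 2n,\ \text{or } 4n \pmod{8n},
\]
by comparing the four residues pairwise and using $\gcd$ constraints from Theorem \ref{a3} (the odd jumps must keep their gcd with $8n$). Hence $t\in\{n,2n,3n\}$, and the only non-trivial image is $C_{8n}(S)$.

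Consequently, as $p_1,\dots,p_{k-2}$ and $y$ vary, every Type-2 image of $C_{8n}(R)$ w.r.t. $2$ arises as $\theta_{8n,2,n}(C_{8n}(R))$. The delicate point is ruling out accidental coincidences among the four residues for special $s$. Those cases I would treat separately by congruences modulo $4n$.
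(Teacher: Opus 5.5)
The paper gives no proof of this statement; it is quoted from \cite{v2-1}. Your argument is correct in substance and uses the computation found throughout the series: apply $\theta_{8n,2,t}$ to the neighbourhood $R\cup(8n-R)$ of $v_0$ and test for symmetry, as in Tables 1 and 2. You decompose the odd-jump edges by their unique even endpoint, where $\theta_{8n,2,t}$ is the identity. That gives a self-contained justification of the symmetry test, which the paper instead takes from Theorem 5.2 of \cite{v2-1}.

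Two repairs are needed.

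First, your bullet for $t=n$ has sign errors. The jump $a=2s-1$ yields $2n+(2s-1)$ and $+(2n-(2s-1))$, while $a=4n-(2s-1)$ yields $-(2n+(2s-1))$ and $-(2n-(2s-1))$. The congruence $-(2n-(2s-1))+4n\equiv 2n-(2s-1)$ that you wrote is false, since its left side equals $2n+(2s-1)$. Similarly, at $t=2n$ both $2s-1\pm 4n$ are $\equiv -(4n-(2s-1))$, and the other jump is $4n-(2s-1)$. The four-element unions are still what you claim, so (i) stands.

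Second, the ``moreover'' clause is not delicate, and it needs neither Theorem \ref{a3} nor a case split on $s$. Put $u=2t$ and $a=2s-1$. The even-vertex jump set is $D=\{u+a,\,u-a,\,u+4n-a,\,u+4n+a\}$ modulo $8n$, and circulance requires $-(u+a)\in D$. The option $-(u+a)\equiv u+a$ would make $4n$ divide the odd number $u+a$. The option $-(u+a)\equiv u+4n+a$ would make $4n$ divide the odd number $u+a+2n$. Both are impossible. The remaining two options give $2u\equiv 0$ or $2u\equiv 4n \pmod{8n}$, that is $n\mid t$. Hence $t\in\{0,n,2n,3n\}$, and part (i) identifies these images. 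This finishes your plan.
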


\begin{theorem}{\rm \cite{v24}}\quad \label{t4.8} {\rm For $n \geq 2$, $1 \leq 2s-1 < 2s'-1 \leq [\frac{n}{2}]$, $0 \leq t \leq [\frac{n}{2}]$, $R$ = $\{2,2s-1, 2s'-1\}$ and $n,s,s'\in \mathbb{N}$, if $\theta_{n,2,t}(C_n(R))$ and $C_n(R)$ are  isomorphic circulant graphs of Type-2 w.r.t. $m$ = 2 for some $t$, then $n \equiv 0~(mod ~ 8)$, $2s-1+2s'-1$ = $\frac{n}{2}$, $2s-1 \neq \frac{n}{8}$, $t$ = $\frac{n}{8}$ or $\frac{3n}{8}$, $1 \leq 2s-1 \leq \frac{n}{4}$ and $n \geq 16$. In particular, when $R$ = $\{2, 2s-1, 4n-(2s-1)\}$, $S$ = $\{2, 2n-(2s-1), 2n+2s-1\}$, $n\geq 2$ and $n,s\in \mathbb{N}$, $\theta_{8n,2,n}(C_{8n}(R))$ = $C_{8n}(S)$ = $\theta_{8n,2,3n}(C_{8n}(R))$, $\theta_{8n,2,n}(C_{8n}(S))$ = $C_{8n}(R)$ = $\theta_{8n,2,3n}(C_{8n}(S))$, $\theta_{8n,2,2n}(C_{8n}(R))$ = $C_{8n}(R)$, $\theta_{8n,2,2n}(C_{8n}(S))$ = $C_{8n}(S)$ and $C_{8n}(R)$ and $C_{8n}(S)$ are Type-2 isomorphic w.r.t. $m$ = 2.  \hfill $\Box$}
\end{theorem}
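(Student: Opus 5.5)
The statement to prove is Theorem \ref{t4.8}. It has two parts: a necessity part for $R=\{2,2s-1,2s'-1\}$, and a concrete realization for $R=\{2,2s-1,4n-(2s-1)\}$ in $C_{8n}$.

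\textbf{Realization part.} I would handle this first, since it is a direct computation with $\theta_{8n,2,t}$ from Definition \ref{d4.2}. For $x=2q+j$ with $j\in\{0,1\}$, the map fixes even vertices and sends odd vertices $x\mapsto x+2t$.

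An edge $(v_x,v_{x+2})$ joins vertices of equal parity, so it maps to an edge of jump $2$. This is the content of Remark \ref{r12}.

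An edge with odd jump $r$ joins an even and an odd vertex.
\begin{itemize}
\item If $x$ is even, the image is $(u_x,u_{x+r+2t})$, of jump $r+2t$.
\item If $x$ is odd, the image is $(u_{x+2t},u_{x+r})$, of jump $r-2t$.
\end{itemize}
So each odd jump $r$ produces jumps $\pm(r+2t)$ and $\pm(r-2t)$, each on half of the edges.

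Take $t=n$, $r_1=2s-1$ and $r_2=4n-(2s-1)$. Then:
\begin{itemize}
\item $r_1+2n=2n+2s-1$;
\item $r_1-2n\equiv -(2n-(2s-1))$;
\item $r_2+2n=6n-(2s-1)\equiv-(2n+2s-1)$;
\item $r_2-2n=2n-(2s-1)$.
\end{itemize}
Hence, under reflexive reduction mod $8n$, the image edge set is exactly that of $C_{8n}(S)$. The even–odd halves of the two jumps combine, so every vertex has full degree in both jumps $2n\pm(2s-1)$.

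The cases $t=3n$ and $t=2n$ follow the same way. For $t=2n$ we have $4n\equiv-4n$, so $r\pm 4n$ returns $\{r_1,r_2\}$. The map from $S$ back to $R$ is symmetric.

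For Type-2 one must check $S\neq xR$ for all $x\in\varphi_{8n}$. Since $2\in R\cap S$, any such $x$ satisfies $2x\equiv\pm2$, so $x\equiv\pm1 \pmod{4n}$, that is $x\in\{1,4n-1,4n+1,8n-1\}$. Multiplying $\{2s-1,4n-(2s-1)\}$ by $4n\pm1$ adds $4n$ times an odd number, which preserves the set up to reflection. So $xR=R\neq S$ whenever $n\neq 2s-1$, which matches Theorem \ref{a17c}.

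\textbf{Necessity part.} Suppose $\theta_{n,2,t}(C_n(R))=C_n(T)$ is circulant. By the computation above, each odd jump $a\in\{2s-1,2s'-1\}$ gives half-edges of jumps $a+2t$ and $a-2t$.

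For the image to be circulant, every jump present must be used from both even and odd starting vertices. There are two ways this can happen:
\begin{itemize}
\item $a+2t\equiv\pm(a-2t)$ for each $a$; or
\item the two odd jumps pair up, $a+2t\equiv\pm(b-2t)$ and $b+2t\equiv\pm(a-2t)$, with $a=2s-1$ and $b=2s'-1$.
\end{itemize}

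In the first case, $4t\equiv0$ or $2a\equiv0$. Since $a$ is odd and $a\le n/2$, $2a\equiv 0$ is impossible. So $t\in\{0,n/4,n/2,3n/4\}$, and these give $T=R$ or $T=xR$ with $x=1+n/2$. The latter is Type-1, since $\gcd(n,1+n/2)=1$ when $4\mid n$.

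In the second case, the plus sign would force $a=b$. The minus sign gives $a+b+4t\equiv0$ and $a+b-4t\equiv 0$, so $8t\equiv0$ and $a+b\equiv 4t$. Because $a+b$ is even and nonzero, $4t$ lies in $\{n/2,3n/2\}$, giving $t=n/8$ or $3n/8$ with $8\mid n$, and $a+b=n/2$.

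The remaining conditions then follow:
\begin{itemize}
\item $2s-1<2s'-1$ together with $a+b=n/2$ gives $2s-1\le n/4$.
\item If $2s-1=n/8$, the resulting $T$ coincides with $R$.
\item The smallest admissible case is $n=16$.
\end{itemize}
Setting $n=8n'$ reduces this to the realization part.

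\textbf{Main obstacle.} The hardest step is making rigorous the claim that circulance of the image forces the jump matching above. The half-edge sets of different jumps could in principle combine in other ways, for example if $a+2t$ coincides with the jump $2$. I would exclude such coincidences by a parity argument: $a\pm2t$ is odd, while $2$ is even. I would then use degree counting at even vertices, via Theorem \ref{a3}, to force the pairing.
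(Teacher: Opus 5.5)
The paper gives no proof of this statement; it is quoted from \cite{v24} and closed with $\Box$. So I can only judge your argument on its own terms.

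\textbf{Realization part.} This is correct: the action of $\theta_{8n,2,t}$ on odd jumps, the computations for $t=n,2n,3n$ and for $S\mapsto R$, and the Type-1 exclusion are all right. Two small points on the exclusion. The real reason $2x\equiv\pm2$ is that $2$ is the only even element of $R$ and of $S$, and an odd $x$ preserves parity. You also need $n\neq 2s-1$, which the statement tacitly assumes.

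\textbf{Necessity part.} There is a genuine gap, exactly at the step you flag, and that step is also mis-stated. Read literally, $a+2t\equiv-(a-2t)$ gives $2a\equiv0$, and $a+2t\equiv-(b-2t)$ gives $a+b\equiv 0$; neither is a circulance condition. The relations $a+b\pm4t\equiv0$ that you then use actually come from pairing $a+2t$ with $-(b+2t)$ and $a-2t$ with $-(b-2t)$. More seriously, nothing shows your two cases are the only ones. Your proposed remedy will not supply this: Theorem~\ref{a3} and degree counting hold for every isomorphic image, so they cannot detect whether $\theta_{n,2,t}(C_n(R))$ is literally a circulant.

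\textbf{The missing idea.} The map $\theta_{n,2,t}$ fixes even vertices, shifts odd ones by $2t$, and commutes with $x\mapsto x+2$. So in the image:
\begin{itemize}
\item every even vertex $e$ has neighbours $e\pm2$ and $e+D$, where $D=\{2t+a,\,2t-a,\,2t+b,\,2t-b\}=2t+\{\pm a,\pm b\}$;
\item every odd vertex $o$ has neighbours $o\pm2$ and $o-D$.
\end{itemize}
By parity, the image is a circulant if and only if $D=-D$, i.e.\ $\{\pm a,\pm b\}+4t=\{\pm a,\pm b\}$.

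Now assume $4\mid n$; this holds in particular when $8\mid n$, which Definition~\ref{d4.2} already imposes for $m=2$. Then the elements of $D$ are four distinct odd residues, none equal to $0$ or $n/2$. So negation is a fixed-point-free involution of $D$, hence one of its three perfect matchings:
\begin{itemize}
\item $\{2t+a,2t-a\},\{2t+b,2t-b\}$ gives $4t\equiv0$;
\item $\{2t+a,2t+b\},\{2t-a,2t-b\}$ gives $2(a+b)\equiv0$, so $a+b=n/2$ and $4t\equiv n/2$;
\item $\{2t+a,2t-b\},\{2t-a,2t+b\}$ gives $2(b-a)\equiv0$, impossible since $0<b-a<n/2$.
\end{itemize}
This is your dichotomy, now proved, and the rest of your argument goes through. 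Add one line for $n\ge16$: $n=8$ forces $\{a,b\}=\{1,3\}$ with $a=n/8$, which is excluded.

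If you want to derive $8\mid n$ rather than take it from the definition, you must also handle $n\equiv2\pmod{4}$. There $b=n/2$ is allowed, and $n/2\in D$ can be a fixed point of negation. Your enumeration omits this case, although a short check shows it only produces $T=R$.
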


\begin{theorem} \cite{v24} \label{4.6} {\rm For $R$ = $\{1, 3, 9n-1, 9n+1\}$, $S$ = $\{3, 3n+1, 6n-1, 12n+1\}$, $T$ = $\{3, 3n-1, 6n+1$, $12n-1\}$ and $n\in\mathbb{N}$, $\theta_{27n,3,n}(C_{27n}(R))$ = $C_{27n}(S)$, $\theta_{27n,3,n}(C_{27n}(S))$ = $C_{27n}(T)$, $\theta_{27n,3,n}(C_{27n}(T))$ = $C_{27n}(R)$ and $C_{27n}(R)$, $C_{27n}(S)$ and $C_{27n}(T)$ are Type-2 isomorphic circulant graphs w.r.t. $r$ = 3. \hfill $\Box$}
\end{theorem}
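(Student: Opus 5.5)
The plan is to verify the three equalities $\theta_{27n,3,n}(C_{27n}(R)) = C_{27n}(S)$, $\theta_{27n,3,n}(C_{27n}(S)) = C_{27n}(T)$ and $\theta_{27n,3,n}(C_{27n}(T)) = C_{27n}(R)$ directly from Definition \ref{d4.2}. Then I would check the non-Type-1 condition (iii) of Remark \ref{r11}. Here $m=3$ and $t=n$; the jump $3$ lies in $R,S,T$, $3 \mid \gcd(27n,3)$ and $27 \mid 27n$, so the hypotheses on $m$ hold. By Lemma \ref{a7} the map $\theta = \theta_{27n,3,n}$ is a bijection on vertices, acting by $v_x \mapsto u_{x+3nj}$ with $j = x \bmod 3$.

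\textbf{Step 1: edge differences.} First I record how an edge $(v_x, v_{x+s})$ is transformed. Its image has difference $s + 3n(j'-j)$, where $j = x \bmod 3$ and $j' = (x+s) \bmod 3$.
\begin{itemize}
\item If $s \equiv 0 \pmod 3$, then $j = j'$ and the jump is unchanged, in line with Remark \ref{r12}.
\item If $s \equiv 1 \pmod 3$, the new difference is $s+3n$ when $j \in \{0,1\}$ and $s-6n$ when $j = 2$.
\item If $s \equiv 2 \pmod 3$, the new difference is $s+6n$ when $j = 0$ and $s-3n$ when $j \in \{1,2\}$.
\end{itemize}

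\textbf{Step 2: apply this to each set.} For $R$: the jump $1$ goes to $3n+1$ or $1-6n \equiv -(6n-1)$. The jump $9n+1 \equiv 1$ goes to $12n+1$ or $3n+1$. The jump $9n-1 \equiv 2$ goes to $15n-1 \equiv -(12n+1)$ or $6n-1$. The jump $3$ is fixed. Hence every image edge has difference in $\pm S$, computed under reflexive modulo $27n$. The analogous computation for $S \to T$ and $T \to R$ is routine.

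\textbf{Step 3: equality of edge sets.} Since $\theta$ is a vertex bijection, the image edge set has the same cardinality as $E(C_{27n}(R))$. This equals $|E(C_{27n}(S))|$, because all jumps in question are distinct and none equals $27n/2$. Containment plus equal cardinality then gives equality of edge sets. For small $n$ (e.g. $n=1$) I would double-check that the listed jumps are genuinely distinct after reflexive reduction, so the counting is valid.

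\textbf{Step 4: the Type-2 condition.} This is where I expect the main work. I must show $S \neq xR$ (and similarly for the other pairs) for every $x \in \varphi_{27n}$, under reflexive modulo $27n$.
\begin{itemize}
\item The only multiple of $3$ in $S$ is $3$, and the only multiple of $3$ in $xR$ is $3x$. So $3x \equiv \pm 3 \pmod{27n}$, i.e.\ $x \equiv \pm 1 \pmod{9n}$.
\item This leaves at most $x \in \{\pm 1, \pm(1+9n), \pm(1+18n)\}$.
\item For each such $x$, I would compute $x\cdot 1$, $x(9n-1)$ and $x(9n+1)$ reflexively. The products are congruent to $\pm 1$ or $\pm 1 \pm 9n$ modulo $27n$, which should never produce the elements $3n\pm 1$ or $6n\mp 1$ required in $S$ (or $T$) for $n \ge 1$.
\end{itemize}
The same finite case check handles $S$ versus $T$. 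This elimination, together with guarding the degenerate small-$n$ coincidences, is the step I expect to require the most care.
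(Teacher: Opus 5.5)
Your proposal is correct, and the computations check out, including the cases you call routine. For $S\to T$ the image differences are $\pm(6n+1),\pm(3n-1),\pm(12n-1)$, and for $T\to R$ they are $\pm1,\pm(9n\pm1)$.

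The paper itself gives no proof of this statement; it is quoted from \cite{v24}. The natural comparison is therefore with the verification method used in its worked problems (Tables 1 and 2, Problem \ref{p6.4}). There $\theta_{n,m,t}$ is applied only to the symmetric jump set $R\cup(n-R)$, i.e.\ to the neighbourhood of $v_0$, a vertex with $j=0$. That the image is again circulant is then taken from the symmetric-equidistance theorem of \cite{v2-1}. Type-1 isomorphism is excluded by noting that the jump $m$ is fixed by $\theta$ while $xm$ lies in $xR$.

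Your route is self-contained. By tracking the residue $j$ of the tail vertex you compute the images of \emph{all} edges, not only those at $v_0$. Containment plus the count $|E|=4\cdot 27n$ then replaces the external equidistance theorem.

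Your Type-1 exclusion is also sharper than the paper's style of argument. The condition $3x\equiv\pm 3 \pmod{27n}$ isolates exactly the multipliers $x\equiv\pm1 \pmod{9n}$. For these the ``fixed jump $m$'' remark proves nothing, so they genuinely have to be computed.

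That computation is easier than you anticipate. Since $(1+9n)^2\equiv 1+18n \pmod{27n}$ and the sign is absorbed by reflexive reduction, it suffices to check $x=1+9n$. One finds $(1+9n)R=R$ and $(1+9n)S=S$ reflexively; for instance $(1+9n)(9n-1)\equiv -1$ and $(1+9n)(3n+1)\equiv 12n+1$. The elimination therefore collapses to $R\neq S$, $R\neq T$, $S\neq T$.

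Finally, no small-$n$ degeneracy occurs. For every $n\ge 1$, all elements of $R$, $S$, $T$ are distinct integers in $[1,\frac{27n}{2})$, so reflexive comparisons reduce to plain integer comparisons.
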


\begin{theorem} \label{c42} {\rm Let $k \geq 3$, $R$ = $\{1, 9n-1, 9n+1, 3p_1, 3p_2, \dots, 3p_{k-2}\}$, $S$ = $\{3n+1, 6n-1, 12n+1,$ $3p_1, 3p_2, \dots, 3p_{k-2}\}$, $T$ = $\{3n-1, 6n+1, 12n-1, 3p_1, 3p_2, \dots, 3p_{k-2}\}$, $\gcd(p_1,p_2,\dots,p_{k-2}) = 1$ and $k,n,p_1,p_2,\dots,p_{k-2}\in\mathbb{N}$. Then, $(i)$  $\theta_{27n,3,n}(C_{27n}(R))$ = $C_{27n}(S)$, $\theta_{27n,3,n}(C_{27n}(S))$ = $C_{27n}(T)$ and $\theta_{27n,3,n}(C_{27n}(T))$ = $C_{27n}(R)$ and $(ii)$ for given values of $k,p_1,p_2,\dots,p_{k-2}$ and $n$, $C_{27n}(R)$, $C_{27n}(S)$ and $C_{27n}(T)$ are isomorphic and are either Type-1 or Type-2 w.r.t. $m$ = 3.  \hfill $\Box$}
\end{theorem}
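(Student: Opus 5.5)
The plan is to reduce part $(i)$ to the computation behind Theorem \ref{4.6}: the jumps that are multiples of $3$ stay fixed, and only the three jumps not divisible by $3$ need to be tracked. Part $(ii)$ will then follow from the bijectivity in Lemma \ref{a7} and the dichotomy built into Definition \ref{d4.2}.

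\textbf{Step 1: how $\theta_{27n,3,n}$ acts on one edge.} Here $m=3$ and $t=n$, so $\theta_{27n,3,n}(v_x)=u_{x+3jn}$ with $j=x \bmod 3$. An edge $(v_x,v_{x+s})$ goes to an edge whose difference is $s+3n(j'-j)$, where $j'=(x+s)\bmod 3$.
\begin{itemize}
\item If $3\mid s$, then $j'=j$ and the difference is unchanged. This is the content of Remark \ref{r12}, so every jump $3p_i$ is preserved.
\item If $s\equiv 1 \pmod 3$, the difference becomes $s+3n$ when $x\equiv 0,1 \pmod 3$ and $s-6n$ when $x\equiv 2 \pmod 3$.
\item If $s\equiv 2 \pmod 3$, it becomes $s+6n$ when $x\equiv 0 \pmod 3$ and $s-3n$ when $x\equiv 1,2 \pmod 3$.
\end{itemize}
Since $27n\equiv 0 \pmod 3$, residues mod $3$ are well defined on $\mathbb{Z}_{27n}$.

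\textbf{Step 2: the map $R\to S$.} Apply Step 1 to $1$, $9n+1$ (both $\equiv 1 \pmod 3$) and $9n-1$ ($\equiv 2 \pmod 3$). Using reflexive reduction mod $27n$, this gives:
\begin{itemize}
\item jump $1$ becomes $3n+1$ on classes $0,1$ and $6n-1$ on class $2$;
\item jump $9n+1$ becomes $12n+1$ on classes $0,1$ and $3n+1$ on class $2$;
\item jump $9n-1$ becomes $12n+1$ on class $0$ and $6n-1$ on classes $1,2$.
\end{itemize}

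Since the graphs are undirected, an image edge of difference $-d$ is the same as an edge of difference $+d$ starting at the shifted vertex. The residue class of that starting vertex has to be recomputed: a difference $-d$ from $y$ corresponds to $+d$ from $y-d$, and $y$ is the image vertex $x+3jn$, whose class equals that of $x$.

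The bookkeeping to do is to check that, for each $d\in\{3n+1,6n-1,12n+1\}$, the starting classes of the resulting $+d$ edges are exactly $\{0,1,2\}$, each once. For example, the class-$0$ edges of jump $9n-1$ become edges $\{y,\,y+12n+1\}$ with $y\equiv 2 \pmod 3$.

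If every class is covered once, the image edge set is exactly the edge set of $C_{27n}(S)$. The count agrees as well, since $\theta$ is a bijection on vertices by Lemma \ref{a7} and each jump contributes $27n$ edges; none of the jumps equals $27n/2$.

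\textbf{Step 3: the maps $S\to T$ and $T\to R$.} These use the identical residue-class computation, applied to $\{3n+1,6n-1,12n+1\}$ and then to $\{3n-1,6n+1,12n-1\}$. This is precisely the computation underlying Theorem \ref{4.6}, with the extra jumps $3p_i$ replacing the jump $3$ and being fixed for the same reason.

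\textbf{Step 4: part $(ii)$.} By Lemma \ref{a7}, $\theta_{27n,3,n}$ is a vertex bijection. It carries edges of $C_{27n}(R)$ onto edges of $C_{27n}(S)$, so the two graphs are isomorphic, and likewise for $S\to T$ and $T\to R$.

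The hypotheses of Definition \ref{d4.2} hold with $m=3$:
\begin{itemize}
\item $|R|=k+1\geq 3$;
\item $3^3=27$ divides $27n$;
\item $r=3p_1\in R$ satisfies $3\mid\gcd(27n,3p_1)$.
\end{itemize}
So either $S=xR$ for some $x\in\varphi_{27n}$, in which case the isomorphism is Type-1, or it is not, in which case it is Type-2 w.r.t. $3$. The same applies to $T$.

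\textbf{Main obstacle.} I expect the hard step to be the undirected-edge bookkeeping in Step 2: converting each negative difference into a positive jump and tracking the correct starting residue class, and thereby showing that no jump class is hit twice while another is missed. I would organize it as a $3\times 3$ table (jumps $s$ against classes $x \bmod 3$) listing the resulting pair (jump $d$, starting class), and check that each $d$ appears with all three classes.
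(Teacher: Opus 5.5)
Your proposal is correct and is essentially the paper's intended argument: the paper gives no proof, presenting the result as Theorem \ref{4.6} with the jump $3$ replaced by the jumps $3p_i$, which $\theta_{27n,3,n}$ fixes (Remark \ref{r12}), and with $(ii)$ read off from the dichotomy in Definition \ref{d4.2}. The only difference is that you redo the Theorem \ref{4.6} computation by residue-class bookkeeping instead of citing it, and that bookkeeping closes up for $S\to T$ and $T\to R$ exactly as it does for $R\to S$.
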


\begin{theorem} \cite{v24} \label{4.8} {\rm For $R_i$ = $\{5, d_i, 25n-d_i, 25n+d_i, 50n-d_i, 50n+d_i\}$, $d_i$ = $5n(i-1)+1$, $i,j$ = 1 to 5 and $n\in\mathbb{N}$, $\theta_{125n,5,jn}(C_{125n}(R_i))$ = $C_{125n}(R_{i+j})$ where $i+j$ in $R_{i+j}$ is calculated under addition modulo 5 and $C_{125n}(R_i)$ are Type-2 isomorphc circulant graphs w.r.t. $r$ = 5.  		\hfill $\Box$}
\end{theorem}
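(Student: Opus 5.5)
The plan is to work with the full symmetric jump set rather than the reduced set $R_i$, compute $\theta_{125n,5,jn}$ on edges directly using Lemma \ref{a7} and Definition \ref{d4.2}, and then rule out Type-1 isomorphism separately.

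\textbf{Step 1: the symmetric jump set.} Put $N=125n$, so $m=5$, $m^3=125$ divides $N$ and $5$ divides $\gcd(N,5)$. This is the setting of Definition \ref{d4.2}, with $5\in R_i$ playing the role of $r$. Modulo $N$, the set $\pm R_i$ is $\{\pm 5\}\cup D_i\cup(-D_i)$, where
\[
D_i=\{x\in\mathbb{Z}_N: x\equiv d_i \pmod{25n}\}=\{d_i+25na: 0\le a\le 4\}.
\]
For example, $-(25n-d_i)\equiv 100n+d_i$ and $-(50n+d_i)\equiv 75n-d_i$. Since $d_i=5n(i-1)+1$, every element of $D_i$ is $\equiv 1 \pmod 5$ and every element of $-D_i$ is $\equiv 4\pmod 5$. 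Moreover $d_{i+5}=d_i+25n\equiv d_i \pmod{25n}$, so the index $i$ may be read modulo $5$, as the statement requires.

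\textbf{Step 2: image of each edge.} By Remark \ref{r12}, edges of jump $\pm5$ are fixed, since both endpoints have the same residue $j$ modulo $5$. Let $t=jn$, so $tm=5jn$. (Here $j$ is the shift index from the statement; the residue of a vertex is written $\ell$ instead.) Consider an edge $\{x,x+s\}$ with $s\in D_i$ and $x\equiv \ell \pmod 5$.
\begin{itemize}
\item If $\ell\neq 4$, the endpoint residues are $\ell$ and $\ell+1$, so the image has difference $s+5jn$.
\item If $\ell=4$, the endpoint residues are $4$ and $0$, so the image has difference $s-20jn$.
\end{itemize}
The two values differ by $25jn$, so both lie in $d_i+5jn+25n\mathbb{Z}$, which is exactly $D_{i+j}$. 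Edges with jumps in $-D_i$ are the same edges read in reverse and need no separate treatment. Hence every edge of $C_N(R_i)$ is sent to an edge of $C_N(R_{i+j})$.

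\textbf{Step 3: the image is the whole circulant.} The map $\theta_{N,5,jn}$ is a bijection on vertices by Lemma \ref{a7}, so it is injective on edges. Both graphs are $11$-regular on $N$ vertices, so they have the same number of edges. An injective map from the edge set of $C_N(R_i)$ into the edge set of $C_N(R_{i+j})$ between sets of equal size is onto. Therefore $\theta_{N,5,jn}(C_N(R_i))=C_N(R_{i+j})$. I expect the main care here to be the bookkeeping of the wrap-around residue $\ell=4$. This is the only place where the factor $25n$ in the structure of $D_i$ is used.

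\textbf{Step 4: not Type-1 (Remark \ref{r11}(iii)).} Suppose $i\neq l$ and $yR_i=R_l$ under reflexive modulo $N$ for some $y\in\varphi_N$.
\begin{itemize}
\item The only jump of $R_l$ divisible by $5$ is $5$ itself, and $y$ is a unit, so $y\cdot5\equiv\pm5 \pmod{N}$. This gives $y\equiv\pm1\pmod{25n}$.
\item Then $yD_i=\pm D_i$ as subsets of $\mathbb{Z}_N$, since $D_i$ is a single residue class modulo $25n$.
\item So $D_l$ must equal $D_i$ or $-D_i$. The first would require $d_l\equiv d_i \pmod{25n}$; but $d_l-d_i=5n(l-i)\not\equiv0\pmod{25n}$ for $l\ne i$ in $\{1,\dots,5\}$.
\item The second fails on residues: $D_l$ consists of elements $\equiv 1\pmod 5$, while $-D_i$ consists of elements $\equiv 4\pmod 5$.
\end{itemize}
Thus $R_{i+j}\ne yR_i$ for every unit $y$ whenever $j\not\equiv0\pmod5$. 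Together with $|R_i|=6\ge3$ and $R_i\ne R_{i+j}$, this gives Type-2 isomorphism w.r.t. $m=5$.
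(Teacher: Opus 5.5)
Your proof is correct, but it takes a somewhat different route from the one the paper relies on. The paper records this theorem without proof (citing \cite{v24}). Its working method, visible in the tables of Problems \ref{p1} and \ref{p2} and in Claims 1--2 of Problem \ref{p6.4} for the analogous $p=7$ family, has three steps:
\begin{enumerate}
\item apply $\theta_{N,m,t}$ only to the symmetric jump set $R_i\cup(N-R_i)$, that is, to the edges at $v_0$, via the rule $s\mapsto s+(s \bmod m)\,mt$ (here $D_i\mapsto D_i+5jn$ and $-D_i\mapsto -D_i+20jn$);
\item observe that the result is $R_{i+j}\cup(N-R_{i+j})$;
\item identify the image graph with $C_N(R_{i+j})$, which tacitly rests on the symmetric-equidistance criterion (Theorem 5.2 of \cite{v2-1}).
\end{enumerate}

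You instead follow every edge $\{x,x+s\}$ according to the residue $\ell$ of $x$, isolate the wrap-around case $\ell=4$, and close with vertex-bijectivity plus an edge count. This is self-contained. It also makes explicit why a computation at one vertex suffices here: $D_{i+j}$ is a full residue class modulo $25n$ in $\mathbb{Z}_{125n}$, so the shifts $+5jn$ and $-20jn$ land in the same set.

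Your exclusion of Type-1 isomorphism is more complete than the paper's analogous argument in Problem \ref{p6.4}. That argument only tests multipliers $1<s<6$ against the element $p$. By forcing $y\equiv\pm1\pmod{25n}$ you handle every $y\in\varphi_N$, including units such as $y=1+25n$ that fix $5$.

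One small slip: $\{\pm5\}\cup D_i\cup(-D_i)$ has $12$ elements, so both graphs are $12$-regular, not $11$-regular. Only the equality of the two degrees is used, so Step 3 stands.
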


\begin{theorem} \label{c44} {\rm Let $k \geq 3$, $d_i$ = $5n(i-1)+1$, $1 \leq i \leq 5$, $R_i$ = $\{d_i, 25n-d_i, 25n+d_i, 50n-d_i$, $50n+d_i, 5p_1, 5p_2, \dots, 5p_{k-2}\}$,  $k,n,p_1,p_2,\dots,p_{k-2}\in\mathbb{N}$ and $\gcd(p_1,p_2,\dots,p_{k-2}) = 1$. Then, for given values of $k,p_1,p_2,\dots,p_{k-2}$ and $n$, circulant graphs $C_{125n}(R_i)$ are isomorphic and are either Type-1 or Type-2 w.r.t. $m$ = 5, $1 \leq i \leq 5$.  \hfill $\Box$}
\end{theorem}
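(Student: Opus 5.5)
The plan is to reduce the statement to Theorem \ref{4.8}: the jumps $5p_1,\dots,5p_{k-2}$ are invariant under $\theta_{125n,5,jn}$, and the remaining jumps of $R_i$ are exactly the non-multiples of $5$ already handled there. Fix $i$ and $j$ with $1\le i,j\le 5$, and write $A_i=\{d_i,25n-d_i,25n+d_i,50n-d_i,50n+d_i\}$, so that $R_i=A_i\cup\{5p_1,\dots,5p_{k-2}\}$. By Definition \ref{d4.2}, $\theta_{125n,5,jn}$ acts edge by edge. An edge $(v_x,v_{x+s})$ of $C_{125n}(R_i)$ is sent to $(u_{x+\alpha jn\cdot 5},\,u_{x+s+\beta jn\cdot 5})$, where $\alpha\equiv x$ and $\beta\equiv x+s \pmod 5$. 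Hence the image of the edge depends only on $x$ and on the jump $s$; it does not depend on which other jumps are present in $R_i$.

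First I will treat the multiples of $5$. If $s=5p_l$, then $\beta=\alpha$, so the edge is sent to $(u_{x'},u_{x'+5p_l})$ with $x'=\theta_{125n,5,jn}(x)$. Since $\theta_{125n,5,jn}$ is a bijection of $\mathbb{Z}_{125n}$ by Lemma \ref{a7}, the full family of edges of jump $5p_l$ is carried onto itself. This is the observation of Remark \ref{r12}. The same argument shows that the jump $5$ in Theorem \ref{4.8} is inert under $\theta$.

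Next I will treat the jumps in $A_i$. Because images of edges are computed jump by jump, the edges of $C_{125n}(R_i)$ with jumps in $A_i$ are exactly the edges of $C_{125n}(\{5\}\cup A_i)$ with jumps in $A_i$, and they are mapped identically in the two graphs. Theorem \ref{4.8} says $\theta_{125n,5,jn}(C_{125n}(\{5\}\cup A_i))=C_{125n}(\{5\}\cup A_{i+j})$. The jump-$5$ edges are mapped onto the jump-$5$ edges. The set of jump-$5$ edges is disjoint from the set of edges whose jumps lie in $A_i$ or $A_{i+j}$, because no element of these sets is $\equiv\pm5 \pmod{125n}$. Therefore the $A_i$-edges are carried exactly onto the $A_{i+j}$-edges. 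Combining this with the previous paragraph gives
\[\theta_{125n,5,jn}(C_{125n}(R_i))=C_{125n}(R_{i+j}),\]
with $i+j$ taken modulo $5$. By Lemma \ref{a7} the map is a bijection on vertices that carries edges onto edges, so it is a graph isomorphism. Hence all the graphs $C_{125n}(R_i)$ are isomorphic.

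Finally I will classify the isomorphism. If $R_{i'}=xR_i$ under reflexive modulo $125n$ for some $x\in\varphi_{125n}$, the pair is Type-1 isomorphic. Otherwise I check the conditions of Remark \ref{r11}. We have $|R_i|\ge 3$, and $m=5$ with $r=5p_1\in R_i\cap R_{i'}$, so $5\mid\gcd(125n,r)$ and $5^3\mid 125n$. The pair is then Type-2 isomorphic w.r.t. $m=5$ by Definition \ref{d4.2}.

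The main obstacle is the decoupling step: justifying that Theorem \ref{4.8}, stated for graphs containing jump $5$, transfers to the graphs with jumps $5p_l$. I will handle it by the edge-by-edge computation above together with the disjointness of the jump classes. Care is needed when some $5p_l$ coincides modulo $125n$ with an element of $A_i$; this is impossible, since every element of $A_i$ is $\equiv 1$ or $4 \pmod 5$.
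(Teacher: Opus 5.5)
Your proposal is correct and follows the route the paper intends. The statement is given here without proof as an extension of Theorem \ref{4.8}. The intended argument is that jumps which are multiples of $m=5$ are left unchanged by $\theta_{125n,5,jn}$ (Remark \ref{r12}), so Theorem \ref{4.8} carries over to give $\theta_{125n,5,jn}(C_{125n}(R_i))=C_{125n}(R_{i+j})$, and the Type-1/Type-2 dichotomy then follows directly from Definition \ref{d4.2}. Your edge-by-edge decoupling makes rigorous the transfer step that the paper only asserts through Remark \ref{r12}; it relies on the injectivity of the induced edge map and on the jump classes being separated by their residues mod $5$.
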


\begin{theorem} \cite{v24} \label{4.10} {\rm For $R_i$ = $\{7, d_i, 49n-d_i, 49n+d_i, 98n-d_i, 98n+d_i, 147n-d_i, 147n+d_i\}$, $d_i$ = $7n(i-1)+1$, $i,j$ = 1 to 7 and $n\in\mathbb{N}$, $\theta_{343n,7,jn}(C_{343n}(R_i))$ = $C_{343n}(R_{i+j})$ where $i+j$ is calculated under addition modulo 7 and $C_{343n}(R_i)$ are Type-2 isomorphic circulant graphs w.r.t. $r$ = 7. 	\hfill $\Box$}
\end{theorem}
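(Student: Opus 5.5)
The plan is to verify the identity $\theta_{343n,7,jn}(C_{343n}(R_i))$ = $C_{343n}(R_{i+j})$ by direct computation with Definition \ref{d4.2} and Lemma \ref{a7}, in the same way as for Theorems \ref{4.6} and \ref{4.8}. Throughout, write $N$ = $343n$ and $m$ = 7; then $m \mid 7$, $m^3 \mid N$, and $7\in R_i$, so the hypotheses of Definition \ref{d4.2} hold.

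\textbf{Step 1: describe $\theta$ on an edge.} By Remark \ref{r12}, the jump $7$ is unchanged, since every edge of period 7 joins $v_x$ to $v_{x+7}$ with the same residue $j$, and both endpoints are shifted by $jtm$. For an edge $(v_x, v_{x+s})$ with $x = 7q+j$ and $s \equiv a \pmod 7$, the image is $(u_{x+jtm}, u_{x+s+j'tm})$, where $j' \equiv j+a \pmod 7$. The new difference is therefore
\[ s + (j'-j)tm \equiv s + \epsilon\, tm \pmod N, \]
where $\epsilon \in \{a, a-7\}$ depends on whether $j+a < 7$.

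\textbf{Step 2: compute the images of the jumps.} Every non-multiple of 7 in $R_i$ is congruent to $\pm 1 \pmod 7$, because $d_i \equiv 1 \pmod 7$ and $49n,98n,147n \equiv 0 \pmod 7$. Take $t = jn$, so $tm = 7jn$. A jump $s \equiv 1$ maps to $s + 7jn$ when $j_0 \le 5$ and to $s + 7jn - 49jn$ when $j_0 = 6$; here $j_0$ is the residue of the starting vertex, renamed to avoid a clash with the exponent $j$. A jump $s \equiv -1$ maps to $s - 7jn$ or to $s - 7jn + 49jn$.

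The key observation is that $49jn$ is a multiple of $49n$. The ``anomalous'' images, those with $j_0 = 6$, therefore stay inside the family $\{d, 49n\pm d, 98n\pm d, 147n\pm d\}$ after reflexive reduction modulo $343n$. Using $d_{i+j} = d_i + 7jn$, each element of the orbit $\{d_i + 49kn,\ 49kn - d_i\}$ maps into the corresponding orbit for $d_{i+j}$. Note that $\pm d + 49kn$ for $k = 0,\dots,6$, reduced reflexively, is exactly the seven-element set $R_{i}\setminus\{7\}$.

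\textbf{Step 3: check bijectivity on edges.} This step shows that the image edge set is exactly the edge set of $C_N(R_{i+j})$. Since $\theta$ is a bijection on vertices by Lemma \ref{a7} and $|R_i| = |R_{i+j}|$, it is enough to show that every image edge has a difference in $R_{i+j}\cup\{7\}$. Equality then follows by counting edges: each jump $s$ with $s \neq N/2$ contributes $N$ edges.

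\textbf{Step 4: Type-2 and the index convention.} Finally, one shows $R_{i+j} \neq xR_i$ for every $x\in\varphi_N$, so that the isomorphism is Type-2. For this, note that $x$ must fix $7$ up to sign, so $x \equiv \pm 1 \pmod{49n}$. Then check directly that multiplication by such an $x$ permutes $\{\pm d_i + 49kn\}$ in a way that preserves the residue $d_i \bmod 49n$ up to sign. Because $d_{i+j} \bmod 49n$ differs from $\pm d_i$ for $j \not\equiv 0$, the two sets cannot coincide.

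The main obstacle is the bookkeeping in Step 2, which has to handle the wraparound at $j_0 = 6$ and the reflexive reduction consistently, and which explains why indices are taken modulo 7 (because $d_{i+7} = d_i + 49n$ lies in the same family).
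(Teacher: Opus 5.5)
Your proposal is correct and follows essentially the route the paper relies on. The paper itself only quotes this result from \cite{v24}, but its worked computation for $C_{1715}(G_i)$ in Problem~\ref{p6.4} uses the same method: split the symmetric jump set by residue modulo $7$, shift each class by $a\cdot 7t$, and rule out Type-1 through the unique jump pair $\pm 7$ divisible by $7$. Your version is tighter than that sketch in two respects. First, you account explicitly for the wraparound shift $\pm 49jn$, which keeps every image jump congruent to $\pm d_{i+j}$ modulo $49n$. Second, you treat units $x\equiv \pm 1 \pmod{49n}$ (for example $x=1+49n$, where $7x\equiv 7$); the paper's argument ``$7s\notin R_i$'' does not cover these, while your observation that such $x$ fixes the classes $\pm d_i \bmod 49n$ does.
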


\begin{theorem} \label{c46} {\rm Let $k \geq 3$, $d_i$ = $7n(i-1)+1$, $1 \leq i \leq 7$, $R_i$ = $\{d_i, 49n-d_i, 49n+d_i, 98n-d_i$, $98n+d_i$, $147n-d_i,$ $147n+d_i,$ $7p_1, 7p_2, \dots, 7p_{k-2}\}$, $k,n,p_1,p_2,\dots,p_{k-2}\in\mathbb{N}$ and $\gcd(p_1,p_2,\dots,p_{k-2}) = 1$. Then for given values of $k,p_1,p_2,\dots,p_{k-2}$ and $n$, circulant graphs $C_{343n}(R_i)$ are isomorphic and are either Type-1 or Type-2 w.r.t. $m$ = 7, $1 \leq i \leq 7$.   \hfill $\Box$}
\end{theorem}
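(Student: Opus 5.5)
We read the statement (Theorem \ref{c46}) as having two parts: the $R_i$ are $\theta$-images of one another, and consequently the graphs $C_{343n}(R_i)$ are isomorphic, with the isomorphism being Type-1 or Type-2 w.r.t. $m=7$. The plan is to reduce the first part to the computation behind Theorem \ref{4.10}, after observing that the added jumps $7p_1,\dots,7p_{k-2}$ are fixed by every $\theta_{343n,7,t}$. For the latter we use Remark \ref{r12}. Concretely, if $s = 7p$, then along an edge $(v_x,v_{x+s})$ with $x=7q+j$ the vertex $x+s$ has the same residue $j$ modulo $7$. Hence
\[
\theta_{343n,7,t}(v_{x+s})-\theta_{343n,7,t}(v_x) = (x+s+jtm)-(x+jtm)=s ,
\]
so jumps that are multiples of $7$ are unchanged under $\theta_{343n,7,t}$.

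With this, $\theta_{343n,7,jn}$ acts on $R_i$ as follows. On the part $\{d_i, 49n\pm d_i, 98n\pm d_i, 147n\pm d_i\}$ it acts exactly as in Theorem \ref{4.10}, giving the corresponding part of $R_{i+j}$. The element $7$ that appears in Theorem \ref{4.10} is fixed, as are all the $7p_l$. Note that $d_i\equiv 1 \pmod 7$ and $49n$ is divisible by $7$, so the residue bookkeeping used in Theorem \ref{4.10} involves only the non-multiples of $7$ and goes through unchanged. One point needs checking: in Theorem \ref{4.10} the jump $7$ might have been used to make the image a circulant graph. I will verify this directly. The image edges from a vertex with residue $j$ under jump $d$ have length $d+((j+d)\bmod 7 - j)\,tm$ modulo $343n$, taken in reflexive form. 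This expression depends only on $d$ and the residue class, and the set of lengths obtained is the same for every $j$, because the $\pm d_i$ pairing symmetrizes over $j$. That symmetry is the Theorem \ref{4.10} computation, and it involves no multiple of $7$. Therefore $\theta_{343n,7,jn}(C_{343n}(R_i)) = C_{343n}(R_{i+j})$ for all $i,j$. Since $\theta$ is a bijection on vertices (Lemma \ref{a7}) that maps edges to edges, the seven graphs are pairwise isomorphic.

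It remains to show that each isomorphism is either Type-1 or Type-2 w.r.t. $7$. For $i\neq i'$, either $R_{i'} = xR_i$ under reflexive modulo $343n$ for some $x\in\varphi_{343n}$, in which case the pair is Type-1 isomorphic, or no such $x$ exists. In the second case we check the conditions of Remark \ref{r11}. Condition (i) holds because $|R_i|=|R_{i'}|\ge 3$ and $R_i\neq R_{i'}$, since $d_i\neq d_{i'}$ and the $d$'s lie in distinct residue classes modulo $49n$. Condition (ii) holds with $r=7p_l$, provided $7 \mid \gcd(343n,r)$ and $7^3 \mid 343n$, both of which are clear. The difficulty is that the element $7$ itself need not belong to $R_i$ here. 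We therefore need some $r$ in both sets with $7\mid r$. The $7p_l$ serve this purpose, and the hypothesis $\gcd(p_1,\dots,p_{k-2})=1$ is what ensures the graph is connected in the intended way. Condition (iii) is exactly the negation of the Type-1 alternative. This gives the dichotomy.

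The main obstacle is the careful verification that dropping the jump $7$ from Theorem \ref{4.10} does not break the circulant structure of the image. I would settle it by the explicit residue-class computation above, done for the seven residues $j$, noting that $jtm = 7jn\cdot j'$ shifts only by multiples of $49n$. These shifts permute the set $\{d_i,49n\pm d_i,98n\pm d_i,147n\pm d_i\}$ (mod $343n$, reflexively) into the corresponding set for $R_{i+j}$.
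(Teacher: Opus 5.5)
Your proposal is correct and follows essentially the route the paper intends: the paper gives no separate proof and treats the result as Theorem~\ref{4.10} plus Remark~\ref{r12} (jumps divisible by $m$ are unchanged by $\theta_{n,m,t}$) plus the definitional dichotomy that a $\theta$-image is either some $C_n(xR)$ or, by Definition~\ref{d4.2}, Type-2, which is exactly what you do. Two side remarks are inaccurate but not load-bearing. First, with $t=sn$ the shift applied to a jump $d\equiv\pm1 \pmod 7$ is $\pm7sn$ on six residue classes and $\mp42sn$ on the seventh, not ``only multiples of $49n$''; these two shifts agree modulo $49n$, and that, together with the jumps $\equiv 1 \pmod 7$ forming the entire class $d_i+49n\mathbb{Z}$ modulo $343n$, is what makes the image circulant. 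Second, the hypothesis $\gcd(p_1,\dots,p_{k-2})=1$ plays no role in connectivity, since every $d_i=7n(i-1)+1$ is already coprime to $343n$.
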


\section{On Abelian groups $(V_{n,m}(C_n(R)), \circ)$ and $(T2_{n,m}(C_n(R)), \circ)$}

In this section, based on the modified definition \ref{d4.2} for isomorphic circulant graphs of Type-2 w.r.t. $m$, we define $V_{n,m}(C_n(R))$ and $T2_{n,m}(C_n(R))$, similar to $Ad_n(C_n(R))$ = $T1_n(C_n(R))$ for any circulant graph $C_n(R)$ with $r\in R$, $m > 1$ divises $\gcd(n, r)$ and $m^3$ divides $n$. We prove that  $(V_{n,m}(C_n(R)), \circ)$ and $(T2_{n,m}(C_n(R)), \circ)$ are Abelian groups and $T2_{n,m}(C_n(R))$ $\subseteq$ $V_{n,m}(C_n(R))$. We call $(T2_{n,m}(C_n(R)), \circ)$ as {\em Type-2 group of $C_n(R)$} w.r.t. $m$ and the group $(Ad_{n}(C_n(R)), \circ')$ = $(T1_{n}(C_n(R)), \circ')$ as the {\em Type-1 group of $C_n(R)$} or {\em Adam's group of $C_n(R)$}.

\begin{definition} \quad \label{a19} Let $V(C_n(R)) = \{v_0, v_1, \dots, v_{n-1}\}$, $V(K_n)$ = $\{u_0, u_1,\dots, u_{n-1}\}$, $x = qm+j,$ $0 \leq j \leq m-1$, $m > 1$ divide $\gcd(n, r)$, $m^3$ divide $n$, $0 \leq q,t,t' \leq \frac{n}{m}-1$, $m,q,t,t',x\in \mathbb{Z}_n$ and $r \in R$. Define $\theta_{n,m,t}:$ $V(C_n(R))$ $\rightarrow$  $V(K_n)$ $\ni$ $\forall$  $x\in \mathbb{Z}_n$, $\theta_{n,m,t}(v_x)$ = $u_{x+jmt}$ and $\theta_{n,m,t}((v_x, v_{x+s}))$ = $(\theta_{n,m,t}(v_x), \theta_{n,m,t}(v_{x+s}))$  and $s\in R,$ under subscript arithmetic modulo $n$. Let $V_{n,m}$ = $\{\theta_{n,m,t}:$ $t$ = $0,1,\dots,\frac{n}{m}-1\}$, $V_{n,m}(v_s)$ = $\{\theta_{n,m,t}(v_s):$ $t = 0,1,\dots, \frac{n}{m}-1\}$, $s \in \mathbb{Z}_n$ and $V_{n,m}(C_n(R))$ = $\{\theta_{n,m,t}(C_n(R)) = C_n(\theta_{n,m,t}(R)):$ $t$ = $0,1,\dots,\frac{n}{m}-1\}$ where $\theta_{n,m,t}(R)$ in $C_n(\theta_{n,m,t}(R))$  is calculated under reflexive modulo $n$. Define $'\circ'$ in $V_{n,m} \ni \theta_{n,m,t} \circ  \theta_{n,m,t'}$ =  $\theta_{n,m,t+t'}$ and $(\theta_{n,m,t} ~\circ ~ \theta_{n,m,t'})(C_n(R))$ = $\theta_{n,m,t}(C_n(R)) ~\circ ~ \theta_{n,m,t'}(C_n(R))$ = $\theta_{n,m,t+t'}(C_n(R))$, $\forall$ $\theta_{n,m,t},\theta_{n,m,t'}\in V_{n,r}$ where $t+t'$ is calculated under arithmetic modulo ~$\frac{n}{m}$.
\end{definition}

$V_{n,m}(C_n(R))$ = $\{\theta_{n,m,t}(C_n(R)): t = 0,1,\dots, \frac{n}{m}-1\}$ and for $t$ = 0 to $\frac{n}{m}-1$, the $\frac{n}{m}$ graphs $\theta_{n,m,t}(C_n(R))$ are isomorphic and 
$V_{n,m}(C_n(R))$ contains all isomorphic circulant graphs of Type-2 of $C_n(R)$ w.r.t. $m$, if exist, under the transformation $\theta_{n,m,t}$ on $C_n(R)$ where $r\in R$, $m > 1$ divides $\gcd(n, r)$ and $m^3$ divides $n$. Now, consider an algebraic property of $V_{n,m}(C_n(R))$. 

\begin{theorem}{\rm  \quad \label{ab19}  Under the above definition of $`\circ'$, $(V_{n,m}(C_n(R)), \circ)$ is an Abelian  group.}
\end{theorem}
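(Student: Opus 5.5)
The plan is to reduce everything to the cyclic group $(\mathbb{Z}_{n/m},+)$ via the index $t$. First, I will check that the operation $\circ$ of Definition \ref{a19} agrees with genuine composition of the vertex maps. Take $x=qm+j$ with $0\le j\le m-1$. Then $\theta_{n,m,t'}(v_x)=v_{x+jmt'}$, and $x+jmt'=(q+jt')m+j$, so modulo $n$ this subscript has the same residue $j$ modulo $m$; this uses $m\mid n$. Applying $\theta_{n,m,t}$ next therefore adds $jmt$, and the composite is $v_x\mapsto v_{x+jm(t+t')}$, which is $\theta_{n,m,t+t'}$. Moreover, $jm(t+t')$ modulo $n$ depends only on $t+t'$ modulo $\frac{n}{m}$, since $jm\cdot\frac{n}{m}\equiv 0 \pmod n$. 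So $\theta_{n,m,t}\circ\theta_{n,m,t'}=\theta_{n,m,t+t'}$ with the index read modulo $\frac{n}{m}$, exactly as the definition prescribes. Since edges are mapped through the vertex map, the same identity holds at the level of graphs: $\theta_{n,m,t}(\theta_{n,m,t'}(C_n(R)))=\theta_{n,m,t+t'}(C_n(R))$.

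Next, I will verify the group axioms on $V_{n,m}(C_n(R))$.
\begin{itemize}
\item Closure holds because $t+t'$ reduced modulo $\frac{n}{m}$ again lies in $\{0,\dots,\frac{n}{m}-1\}$.
\item Associativity and commutativity are inherited from addition in $\mathbb{Z}_{n/m}$, since both sides of each identity equal $\theta_{n,m,t+t'+t''}(C_n(R))$ and $\theta_{n,m,t+t'}(C_n(R))=\theta_{n,m,t'+t}(C_n(R))$ respectively.
\item The identity element is $\theta_{n,m,0}(C_n(R))=C_n(R)$, because $\theta_{n,m,0}$ fixes every vertex.
\item The inverse of $\theta_{n,m,t}(C_n(R))$ is $\theta_{n,m,\frac{n}{m}-t}(C_n(R))$ for $t\neq 0$, and $C_n(R)$ is its own inverse.
\end{itemize}
Lemma \ref{a7} ensures each $\theta_{n,m,t}$ is a bijection of $\mathbb{Z}_n$, so every map involved is legitimate.

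The main obstacle is well-definedness. The elements of $V_{n,m}(C_n(R))$ are graphs, and distinct indices $t\neq s$ may give the same graph, as in Theorem \ref{a17d} where $t=n$ and $t=3n$ both give $C_{8n}(S)$. So I must show that $\theta_{n,m,t}(C_n(R))=\theta_{n,m,s}(C_n(R))$ implies $\theta_{n,m,t+t'}(C_n(R))=\theta_{n,m,s+t'}(C_n(R))$ for every $t'$. I will use the composition identity from the first step: $\theta_{n,m,t+t'}(C_n(R))=\theta_{n,m,t'}(\theta_{n,m,t}(C_n(R)))$. The image of a graph under the vertex map $\theta_{n,m,t'}$ is determined by the graph's edge set alone. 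Hence equal inputs give equal outputs, and the same argument works in the first factor by commutativity. An equivalent route is to note that $t\mapsto\theta_{n,m,t}(C_n(R))$ is a surjective action-compatible map from $\mathbb{Z}_{n/m}$, so the operation is the one induced on a quotient. Either way, $(V_{n,m}(C_n(R)),\circ)$ is then a homomorphic image of the cyclic group $\mathbb{Z}_{n/m}$, and hence an Abelian group.
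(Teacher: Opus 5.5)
Your proof is correct. Its skeleton is the paper's: closure, associativity, commutativity, the identity $\theta_{n,m,0}(C_n(R))=C_n(R)$ and the inverse $\theta_{n,m,\frac{n}{m}-t}(C_n(R))$ are all read off from addition in $\mathbb{Z}_{n/m}$.

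The difference is in how the operation is justified. The paper simply \emph{declares} $\theta_{n,m,t}(C_n(R))\circ\theta_{n,m,t'}(C_n(R))=\theta_{n,m,t+t'}(C_n(R))$ and argues as if $t\mapsto\theta_{n,m,t}(C_n(R))$ were injective; the text before the theorem even speaks of ``the $\frac{n}{m}$ graphs''. You instead derive the index rule from genuine composition of the vertex bijections and then check well-definedness. That check is not cosmetic. Collisions do occur, e.g.\ $\theta_{54,3,6}(C_{54}(R_1))=C_{54}(R_1)$ in Table 1, so the paper's argument alone does not show that $\circ$ is well defined on the \emph{set} of graphs.

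Your orbit viewpoint also gives more. It identifies $V_{n,m}(C_n(R))$ with $\mathbb{Z}_{n/m}$ modulo the stabilizer of $C_n(R)$, so the group is cyclic of order dividing $\frac{n}{m}$: order $6$ rather than $18$ in Problem \ref{p1}, consistent with the paper's own listing there.

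One point is worth stating explicitly: your argument needs the elements of $V_{n,m}(C_n(R))$ to be the image graphs under the vertex bijection, as you implicitly take them. These may be non-circulant, as in the NS rows of Table 1. They should not be read as $C_n$ of a reflexively reduced image of $R$ alone. Since $\theta_{n,m,t}(n-s)$ and $n-\theta_{n,m,t}(s)$ differ by $m^2t$ modulo $n$, the composition identity fails under that reading. For example, reducing $\theta_{54,3,2}(\{2,3,16,20\})$ to $\{3,14,22\}$ and then applying $\theta_{54,3,1}$ gives $\{3,20,25\}$, whereas the reduced $\theta_{54,3,3}(\{2,3,16,20\})$ is $\{3,16,20,25\}$.
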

\begin{proof}   We have, $V_{n,m}(C_n(R))$ = $\{\theta_{n,m,t}(C_n(R)): t = 0,1,\dots, \frac{n}{m}-1\}$ and for $t$ = 0 to $\frac{n}{m}-1$, the $\frac{n}{m}$ graphs $\theta_{n,m,t}(C_n(R))$ are isomorphic where $r\in R$, $m > 1$ divides $\gcd(n, r)$ and $m^3$ divides $n$.  Also, $\forall t,t'\in \{0,1,\dots,$ $\frac{n}{m}-1\}$, $\theta_{n,m,t}(C_n(R)) \circ  \theta_{n,m,t'}(C_n(R))$ = $\theta_{n,m,t+t'}(C_n(R))\in V_{n,m}$ where $t+t'$ is calculated under addition modulo $\frac{n}{m}.$ This implies, $V_{n,m}(C_n(R))$ is closed under $`\circ'$. $\theta_{n,m,0}(C_n(R))$ = $C_n(R)$ is the identity element. $`\circ'$ is abelian since $\theta_{n,m,t}(C_n(R)) \circ \theta_{n,m,t'}(C_n(R))$ = $\theta_{n,m,t+t'}(C_n(R))$ = $\theta_{n,m,t'+t}(C_n(R))$ = $\theta_{n,m,t'}(C_n(R)) \circ \theta_{n,m,t}(C_n(R))$, $\forall t,t'\in\{0,1,\dots,\frac{n}{m}-1\}$. The associative law can be proved very easily.  $\theta_{n,m,\frac{n}{m}-t}(C_n(R))$ is the inverse of $\theta_{n,m,t}(C_n(R))$, $\forall t\in \{0,1,\dots, \frac{n}{m}-1\}$ since $\theta_{n,m,\frac{n}{m}-t}(C_n(R)) \circ \theta_{n,m,t}(C_n(R))$ = $\theta_{n,m,(\frac{n}{m}-t)+t}(C_n(R))$ = $\theta_{n,m,0}(C_n(R))$. Hence the result. 
\end{proof}

\begin{theorem}  \label{a16} {\rm Let $n,r\in \mathbb{N}$, $m > 1$ divide $\gcd(n, r)$, $m^3$ divide $n$, $r \notin R$, $\theta_{n,m,t}(C_n(R \cup \{ r \}))$ = $C_n(S)$ and $C_n(R \cup \{ r \})$ and $C_n(S)$ be Type-2 isomorphic w.r.t. $m$, 	$1 \leq t \leq \frac{n}{m} - 1$. Then, there doesn't exist circulant graph that is isomorphic of Type-2 w.r.t. $m$ to $C_n(R)$. That is $C_n(R)$ and $\theta_{n, m,t}(C_n(R))$ = $C_n(S \setminus\{r\})$ are isomorphic but not of Type-2 w.r.t. $m$ where $m > 1$ divides $\gcd(n, r)$, $m^3$ divides $n$ and $0 \leq t \leq \frac{n}{m} - 1$.}
\end{theorem}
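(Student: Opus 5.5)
The plan is to exploit two facts about $\theta_{n,m,t}$. First, it is a vertex bijection by Lemma \ref{a7}. Second, it fixes every jump that is a multiple of $m$ (Remark \ref{r12}). With these, the whole statement reduces to the definition of Type-2 isomorphism, Definition \ref{d4.2} and Remark \ref{r11}(ii). Throughout I read the hypothesis in the intended way: $r$ is the (only) element of $R\cup\{r\}$ whose gcd with $n$ is divisible by $m$. So every element of $R$ fails the divisibility condition required in Definition \ref{d4.2}.

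\emph{Step 1: the $r$-edges are fixed.} Take an edge $(v_x,v_{x+r})$ with $x=qm+j$. Since $m\mid r$, the vertex $x+r$ has the same residue $j$ modulo $m$. Hence $\theta_{n,m,t}(v_x)=u_{x+jmt}$ and $\theta_{n,m,t}(v_{x+r})=u_{x+r+jmt}$, so the image edge again has jump $r$. Thus $\theta_{n,m,t}$ maps the set of $r$-edges of $C_n(R\cup\{r\})$ bijectively onto the set of $r$-edges of $K_n$. In particular $r\in S$.

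\emph{Step 2: the image of $C_n(R)$ is $C_n(S\setminus\{r\})$.} The edge set of $C_n(R\cup\{r\})$ is the disjoint union of the edges of $C_n(R)$ and the $r$-edges. Since $\theta_{n,m,t}$ is injective on vertices, it is injective on edges. Its image is the edge set of $C_n(S)$, by hypothesis. Removing the image of the $r$-edges, which by Step 1 is exactly the $r$-edge class of $C_n(S)$, shows that the image of the edges of $C_n(R)$ is the edge set of $C_n(S\setminus\{r\})$. Therefore $\theta_{n,m,t}(C_n(R))=C_n(S\setminus\{r\})$, and $\theta_{n,m,t}$ is a graph isomorphism between them. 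The case $t=0$ is trivial, since then $\theta_{n,m,0}$ is the identity.

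\emph{Step 3: this isomorphism is not Type-2.} By Definition \ref{d4.2} and Remark \ref{r11}(ii), a Type-2 isomorphism w.r.t.\ $m$ of $C_n(R)$ requires some $r'\in R$ with $m\mid\gcd(n,r')$. By the reading of the hypothesis above, no such $r'$ exists once $r$ is removed. Hence no transformation $\theta_{n,m,t}$, for any $0\le t\le\frac{n}{m}-1$, can yield a Type-2 isomorphism of $C_n(R)$ w.r.t.\ $m$. So no circulant graph is Type-2 isomorphic to $C_n(R)$ w.r.t.\ $m$.

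The main obstacle is Step 2. One must check that removing the $r$-edges on both sides is legitimate: that $r$-edges of $C_n(S)$ cannot also arise as images of $R$-edges. The injectivity of $\theta_{n,m,t}$ on edges (via Lemma \ref{a7}), together with Step 1 showing that the $r$-edge class maps onto the entire $r$-edge class of $K_n$, should settle this. A second, interpretive point is that Step 3 relies on reading the hypothesis as making $r$ the sole jump of $R\cup\{r\}$ divisible by $m$. If that reading is not intended, the claim would instead have to be derived from the condition $S\neq xR$ for all $x\in\varphi_n$, and I would attempt it by transporting any multiplier $x$ between $R$ and $S\setminus\{r\}$ to one between $R\cup\{r\}$ and $S$.
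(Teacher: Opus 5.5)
Your proposal is correct and is essentially the paper's proof. The paper assumes $m\nmid\gcd(n,r_i)$ for every $r_i\in R$ (your reading), uses $\theta_{n,m,t}(r)=r$ to strip off the $r$-jump and get $\theta_{n,m,t}(C_n(R))=C_n(S\setminus\{r\})$, and observes that no remaining jump meets the divisibility condition of Definition~\ref{d4.2}; your edge-level Step~2 makes its one-line set-union step rigorous. That assumption is genuinely needed, so your $S\neq xR$ fallback could not work: with $r=8$, $\theta_{16,2,2}$ sends $C_{16}(1,2,4,7,8)$ to $C_{16}(2,3,4,5,8)$ and $C_{16}(1,2,4,7)$ to $C_{16}(2,3,4,5)$, which is Type-2 (not Type-1) isomorphic to it w.r.t.\ $m=2$.
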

\begin{proof}\quad Let $R = \{r_1,r_2,\dots,r_k\}$, $m > 1$ divide $\gcd(n, r)$, $m^3$ divide $n$ and $\gcd(n,r_i) \neq lm$ for $i = 1,2,\dots,k$ and $l\in\mathbb{N}$.
	
	Given, $\theta_{n,m,t}(C_n(R \cup \{ r \}))$ = $C_n(S)$ $\neq$ $C_n(x(R\cup \{r\}))$, $\forall$ $x\in \varphi_n$  and $C_n(R \cup \{ r \})$ and $C_n(S)$ are Type-2 isomorphic w.r.t. $m$, $1 <x < n-1.$
	
	This implies, $\theta_{n,m,t}(C_n(R))$ $\cup$ $\theta_{n,m,t}(C_n(r))$ = $C_n(S)$ and $r\in S$ $\ni$ $m > 1$ divides $\gcd(n, r)$ and $m^3$ divides $n$. This implies, $\theta_{n,m,t}(C_n(R))$ = $C_n(S \setminus \{r\})$ and $r\notin R,S \setminus \{r\}$ \hfill $(a)$
\\
 since $\theta_{n,m,t}(r)$ = $r$ and $\theta_{n,m,t}(C_n(r))$ = $C_n(\theta_{n,m,t}(r))$ = $C_n(r)$. 
	
	Now, for every $s\in S \setminus \{r\}$, either $m > 1$ does't divide $\gcd(n, s)$ or $m^3$ is not a divisor of $n$. And so $C_n(S \setminus \{r\})$ can not be Type-2 isomorphic w.r.t. {$m$ to any} circulant graph $C_n(R)$ where $m > 1$ divides $\gcd(n, r)$ and $m^3$ divides $n$.
	
	Also, if $\theta_{n,m,t}(C_n(U))$ = $C_n(W)$ for some $t$, then 
	
	\hspace{.7cm} $\theta_{n,m,\frac{n}{m}-t}(C_n(W)) = \theta_{n,m,\frac{n}{m}-t}(\theta_{n,m,t}(C_n(U))) = \theta_{n,m,\frac{n}{m}-t+t}(C_n(U))  = C_n(U)$. \hfill $(b)$
	
	Using $(b)$ in $(a)$, we get, $\theta_{n,m,\frac{n}{m}-t}(C_n(S \setminus \{r\}))$ = $C_n(R)$ which implies that $C_n(S \setminus \{r\})$ and $C_n(R)$ are isomorphic circulant graphs but they are not of Type-2 w.r.t. $m$ since either $m$ does't divide $\gcd(n, s)$ or $m^3$ is not a divisor of $n$ for every $s\in S \setminus \{r\}$. This implies, $C_n(R)$ and $\theta_{n, m,t}(C_n(R))$ = $C_n(S \setminus\{r\})$ are isomorphic but not of Type-2 w.r.t. m, $0 \leq t \leq \frac{n}{m} - 1$.
\end{proof}

Some properties of $\theta_{n,m_i,t}(C_n(R))$ are given below. 

\vspace{.1cm} 
\noindent
{\bf Properties of $\theta_{n,m,t}(C_n(R)):$}	
\begin{enumerate}
	\item [\rm (a)]  \label{r11a}  Let $\theta_{n,m,t}(C_n(R)) = C_n(S)$, $m > 1$ divide $\gcd(n, r)$, $m^3$ divide $n$, $r\in R$ and $r_i \in [1, \frac{n}{2}]$ $\ni$ $\gcd(n, r_i)$ = $\gcd(n,r)$. Then, $r_i \in R$ if and only if $r_i \in S$ follows from the definition of $\theta_{n,m,t}$.
	
	\item [\rm (b)]  \label{r11b} For a given circulant graph $C_n(R)$ and a particular value of $t$ if $\theta_{n,m,t}(C_n(R))$ = $C_n(S)$ for some $S$, then $\forall$ $t'$ $\ni$ $0 \leq t,t' \leq \frac{n}{m} -1$, $\theta_{n,m,t+t'}(C_n(R))$ = $\theta_{n,m,t'}(C_n(S))$ since $\theta_{n,m,t+t'}(C_n(R))$ = 
	\\
 $\theta_{n,m,t'+t}(C_n(R))$ = $\theta_{n,m,t'}(\theta_{n,m,t}(C_n(R)))$ = $\theta_{n,m,t'}(C_n(S))$ where $m$ divides $\gcd(n,r)$ and $m^3$ divides $n$.
	
	\item [\rm (c)]  \label{r11c} Let $C_n(R)$ $\cong$ $C_n(S)$ and $0 \leq t \leq \frac{n}{m}-1$. Then, for some $t$, $C_n(S)$ = $\theta_{n,m,t}(C_n(R))$  if and only if $C_n(R)$ = $\theta_{n,m,\frac{n}{m}-t}(C_n(S))$. This follows from $\theta_{n,m,\frac{n}{m}-t}(C_n(S))$ = $\theta_{n,m,\frac{n}{m}-t}(\theta_{n,m,t}(C_n(R)))$ = $\theta_{n,m,(\frac{n}{m}-t)+t}(C_n(R))$ = $\theta_{n,m,\frac{n}{m}}(C_n(R))$ = $\theta_{n,m,0}(C_n(R))$ = $C_n(R)$.

\vspace{.1cm} 
Circulant graph	$C_n(R)$ is vertex-transitive and  for a particular value of $t,$ $\theta_{n,m,t}(C_n(R))$ is the resultant of uniform rotation of $m$ copies of circulant subgraphs $\Gamma_j$ of $C_n(R)$ and by Theorem 5.2 in \cite{v2-1}, $\theta_{n,m,t}(C_n(R))$ satisfies the symmetric equidistance condition w.r.t. each of its vertices if and only if $\theta_{n,m,t}(C_n(R))$ = $C_n(S')$ for some $S' \subseteq [1, \frac{n}{2}]$, $r \in R$, $m > 1$ divides $\gcd(n,r)$, $m^3$ divides $n$, $0 \leq j \leq m-1$ and $0 \leq t \leq \frac{n}{m}-1$. Based on the above, we obtain some more properties of $\theta_{n,m,t}(C_n(R))$ as given below. 

\vspace{.1cm} 
\item [\rm (d)] \label{r11d} 	If $t_1$ is the smallest positive integer value of $t$ such that $\theta_{n,m,t_1}(C_n(R))$ = $C_n(S_1)$ for some $S_1 \neq R$, $q_1t_1$ = $\frac{n}{m}$ and $q_1\in\mathbb{N}$, then $\forall i\in\mathbb{N}_0 \ni$ $0 \leq it_1 \leq \frac{n}{m}-1$, $\theta_{n,m,it_1}(C_n(R))$ = $C_n(S_i)$ for some $S_i \subseteq [1, \frac{n}{2}]$, $S_0$ = $R$, $r\in R$, $m$ divides $\gcd(n,r)$, $m^3$ divides $n$, $0 \leq i \leq q_1$ and $1 \leq t_1 \leq \frac{n}{m}$. 

\item [\rm (e)]  \label{r11e}  If $t_2$ is the smallest positive integer value of $t$ such that $\theta_{n,m,t_2}(C_n(R))$ = $C_n(S_1)$ for some $S_1 \neq R$, $C_n(S_1)\in T1_n(C_n(R))$ = $Ad_n(C_n(R))$, $q_2t_2$ = $\frac{n}{m}$ and $q_2\in\mathbb{N}$, then $\forall$ $i\in\mathbb{N}_0$ such that $0 \leq it_2 \leq \frac{n}{m}-1$, $\theta_{n,m,it_2}(C_n(R))$ = $C_n(S_i)$ and $C_n(S_i)\in T1_n(C_n(R))$ for some $S_i \subseteq [1, \frac{n}{2}]$, $S_0$ = $R$, $r\in R$, $m > 1$ divides $\gcd(n,r)$, $m^3$ divides $n$, $0 \leq i \leq q_2$ and $1 \leq t_2 \leq \frac{n}{m}$. 
\end{enumerate}

\begin{definition}\quad Let $T2_{n,m}(C_n(R))$ = $\{C_n(R)\}$ $\cup$ $\{C_n(S):$ $C_n(S)$ is Type-2 isomorphic to $C_n(R)$ w.r.t. $m\}$ where $r\in R$, $m > 1$ divides $\gcd(n,r)$ and $m^3$ divides $n$. We call $T2_{n,m}(C_n(R))$ as {\em the Type-2 set of $C_n(R)$
w.r.t. $m$}.

That is, {\em the Type-2 set of $C_n(R)$ w.r.t. $m$} denoted by $T2_{n,m}(C_n(R))$ is $\{C_n(R)\}$ $\cup$ $\{\theta_{n,m,t}(C_n(R)):$ $\theta_{n,m,t}(C_n(R))$ = $C_n(S)$ and $C_n(S)$ is Type-2 isomorphic to $C_n(R)$ w.r.t. $m$, $1 \leq t \leq \frac{n}{m}-1\}$ = $\{\theta_{n,m,0}(C_n(R))\}$ $\cup$ $\{\theta_{n,m,t}(C_n(R)):$ $\theta_{n,m,t}(C_n(R))$ = $C_n(S)$ and $C_n(S)$ is Type-2 isomorphic to $C_n(R)$ w.r.t. $m$, $1 \leq t \leq \frac{n}{m}-1\}$ where $r\in R$, $m > 1$ divides $\gcd(n,r)$ and $m^3$ divides $n$.
\end{definition}

 We continue to present different properties related to $T2_{n,r}(C_n(R))$.

\begin{enumerate}
\item [\rm (f)]  \label{r11f}  If $t_3$ is the smallest positive integer value of $t$ $\ni$ $\theta_{n,m,t_3}(C_n(R))$ = $C_n(S_1)$ for some $S_1 \neq R$, $C_n(S_1)\in T2_{n,m}(C_n(R))$, $q_3t_3$ = $\frac{n}{m}$ and $q_3\in\mathbb{N}$, then $\forall i\in\mathbb{N}_0 \ni 0 \leq it_3 \leq \frac{n}{m}-1$, $\theta_{n,m,it_3}(C_n(R))$ = $C_n(S_i)$ for some $S_i \subseteq [1, \frac{n}{2}]$, $C_n(S_i)\in T2_{n,m}(C_n(R))$, $S_0$ = $R$, $m > 1$ divides $\gcd(n,r)$, $m^3$ divides $n$, $0 \leq i \leq q_3$ and $1 \leq t_3 \leq \frac{n}{m}-1$. 

\item [\rm (g)]  \label{r11g}  If $t_4$ is the smallest positive integer value of $t$ $\ni$ $\theta_{n,m,t_4}(C_n(R))$ = $C_n(R)$ = $\theta_{n,m,0}(C_n(R))$, $q_4t_4$ = $\frac{n}{m}$, $1 \leq t_4 \leq \frac{n}{m}-1$ and $q_4\in\mathbb{N}$, then $\forall i\in\mathbb{N}_0 \ni 0 \leq it_4 \leq \frac{n}{m}-1$, $\theta_{n,m,it_4}(C_n(R))$ = $C_n(R)$, $r\in R$, $m > 1$ divides $\gcd(n,r)$, $m^3$ divides $n$, $0 \leq i \leq q_4$ and $1 \leq t_4 \leq \frac{n}{m}-1$.  
\end{enumerate}

\vspace{.1cm}
The following result related to $T2_{n,m}(C_n(R))$ is a result similar to Theorem \ref{a7b} related to $T1_n(C_n(R))$.

 \begin{theorem}{\rm \cite{v24}} \quad {\rm  Let $C_n(R)$ $\cong$ $C_n(S)$, $R \neq S$ and $|R| = |S| \geq 3$. Then, $C_n(S)\in$ $T2_{n,r}($ $C_n(R))$ if and only if  $T2_{n,r}(C_n(R))$ = $T2_{n,r}(C_n(S))$ if and only if  $C_n(R)\in T2_{n,r}(C_n(S))$.  }
 \end{theorem}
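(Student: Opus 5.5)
I will follow the template of the proof of Theorem \ref{a7b}, with the group $(V_{n,m}(C_n(R)), \circ)$ of Theorem \ref{ab19} playing the role of $(\varphi_n, \circ)$. Write $T2_{n,m}(C_n(R)) = \{\theta_{n,m,t}(C_n(R)) : t \in A_R\}$, where $A_R \subseteq \{0,1,\dots,\frac{n}{m}-1\}$ is the set of those $t$ with either $t=0$ or $\theta_{n,m,t}(C_n(R))$ Type-2 isomorphic to $C_n(R)$ w.r.t. $m$. Since $C_n(R)\cong C_n(S)$, $R\neq S$ and $|R|=|S|\geq 3$, membership $C_n(S)\in T2_{n,m}(C_n(R))$ means there is $t\in A_R$, $t\neq 0$, with $C_n(S)=\theta_{n,m,t}(C_n(R))$ and $S\neq xR$ for all $x\in\varphi_n$. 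The fixed elements $r\in R$ divisible by $m$ are unchanged by $\theta_{n,m,t}$ (Remark \ref{r12} and Property (a)), so $r\in S$ as well. Hence the hypotheses on $m$ ($m\mid\gcd(n,r)$, $m^3\mid n$) hold for $S$ as well, and $T2_{n,m}(C_n(S))$ is defined with the same $m$.

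\textbf{Step 1: symmetry.} I will show that $C_n(S)\in T2_{n,m}(C_n(R))$ implies $C_n(R)\in T2_{n,m}(C_n(S))$. By Property (c), $C_n(R)=\theta_{n,m,\frac{n}{m}-t}(C_n(S))$. Type-1 non-isomorphism is symmetric: if $R = yS$ for some $y\in\varphi_n$, then $S = y^{*}R$ with $yy^{*}=1$, as in the proof of Theorem \ref{a7b}. The reverse implication holds by interchanging $R$ and $S$.

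\textbf{Step 2: equality of the sets.} I will prove $T2_{n,m}(C_n(R)) \subseteq T2_{n,m}(C_n(S))$; the reverse inclusion follows by symmetry. Take $C_n(T)\in T2_{n,m}(C_n(R))$, say $C_n(T)=\theta_{n,m,t'}(C_n(R))$. By Property (b) and the group law,
\[ C_n(T)=\theta_{n,m,t'}(\theta_{n,m,\frac{n}{m}-t}(C_n(S)))=\theta_{n,m,t'-t}(C_n(S)), \]
so $C_n(T)\in V_{n,m}(C_n(S))$. It remains to check that $C_n(T)$ is either $C_n(S)$ itself or Type-2 isomorphic to $C_n(S)$. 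Suppose $C_n(T)\neq C_n(S)$ and that $C_n(T)$ is Type-1 isomorphic to $C_n(S)$, i.e.\ $C_n(T)\in Ad_n(C_n(S))$. By Theorem \ref{a7b}, $Ad_n(C_n(T))=Ad_n(C_n(S))$. If $T=R$ (the case $t'\in\{0\}$ or $\theta_{n,m,t'}$ fixing $R$), this gives $C_n(S)\in Ad_n(C_n(R))$, contradicting the Type-2 hypothesis on $S$. Otherwise $C_n(T)$ is Type-2 to $C_n(R)$, and I still need to rule out $C_n(T)$ being Type-1 to $C_n(S)$.

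\textbf{Main obstacle.} The hard case is when $C_n(R)$, $C_n(S)$, $C_n(T)$ are pairwise distinct, $S$ and $T$ are each not Type-1 to $R$, but $T = yS$ for some $y\in\varphi_n$. Transitivity of ``not Type-1'' fails in general, so the group structure alone will not decide this. My first attempt is to use that the elements of $R$ which are multiples of $m$ are fixed by every $\theta_{n,m,t}$ (Property (a)). Then $T=yS$ forces $y$ to permute that fixed subset, and I try to transport $y$ back through $\theta_{n,m,t}$, i.e.\ show that $\varphi_{n,y}$ commutes with $\theta_{n,m,t}$ up to a change of $t$, to derive $T=y'R$ and hence a contradiction. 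If that fails, I will instead read the definition of $T2_{n,m}$ as closed under such identifications (as the paper's examples suggest, e.g.\ Theorem \ref{4.6}, where all members lie in one $V$-orbit), and conclude directly from Step 1 and the orbit equality $V_{n,m}(C_n(R))=V_{n,m}(C_n(S))$.
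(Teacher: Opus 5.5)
\textbf{There is a genuine gap, and it is the case you call the main obstacle.} Step 2 never proves $T2_{n,m}(C_n(R))\subseteq T2_{n,m}(C_n(S))$ when some $C_n(T)\in T2_{n,m}(C_n(R))$, distinct from $C_n(R)$ and $C_n(S)$, satisfies $T=yS$ for some $y\in\varphi_n$. Your fallback proves a different statement, since it redefines $T2_{n,m}$. Your first attempt cannot succeed, because this configuration really occurs. The implication ``$C_n(S)\in T2_{n,m}(C_n(R))\Rightarrow T2_{n,m}(C_n(R))=T2_{n,m}(C_n(S))$'' is false in general once $m$ is composite, which the paper's definition permits.

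The paper's proof has the same hole, unacknowledged. After writing $C_n(T)=\theta_{n,m,t+(\frac nm-t')}(C_n(S))$, it simply asserts that $C_n(T)$ and $C_n(S)$ are Type-2 isomorphic. In effect it treats Type-2 isomorphism as transitive and never checks that $T\neq yS$.

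Your Step 1 is correct: it shows the first and third conditions are equivalent, and the second trivially implies both.

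\textbf{Counterexample.} Take $n=64$, $m=r=4$ (so $m\mid\gcd(64,4)$ and $m^3\mid 64$), and $R=\{1,4,31\}$.
On vertices, $\theta_{64,4,t}(x)=x+4jt$ for $x\equiv j\pmod 4$. Hence $\theta_{64,4,4}$ is $x\mapsto 17x$, because $16x\equiv 16j\pmod{64}$.
Apply $\theta_{64,4,t}$ to $R\cup(64-R)=\{1,4,31,33,60,63\}$ as in Table 1:
odd $t$ give non-symmetric sets, and $\theta_{64,4,8}$ fixes $C_{64}(R)$.
The values $t=2,4,6$ give $S=\{4,9,23\}$, $17R=\{4,15,17\}$ and $17S=\{4,7,25\}$.
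Any $y\in\varphi_{64}$ with $4\in yR$ or $4\in yS$ must satisfy $4y\equiv\pm4$, so $y\equiv\pm1\pmod{16}$. Such $y$ give only $yR\in\{R,17R\}$ and $yS\in\{S,17S\}$ (reflexively).
Therefore $T2_{64,4}(C_{64}(R))=\{C_{64}(1,4,31),C_{64}(4,9,23),C_{64}(4,7,25)\}$, whereas $T2_{64,4}(C_{64}(S))=\{C_{64}(4,9,23),C_{64}(1,4,31),C_{64}(4,15,17)\}$.
So $C_{64}(S)\in T2_{64,4}(C_{64}(R))$, yet the two Type-2 sets differ, and $T=17S$ is exactly your obstacle.
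Note that $\varphi_{64,17}=\theta_{64,4,4}$ does commute with every $\theta_{64,4,t}$. But that only yields $C_{64}(T)=\theta_{64,4,2}(C_{64}(17R))$, never $T=y'R$.

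\textbf{What your commuting idea does prove.} It gives a transport rule: if $\theta_{n,m,v}(C_n(R))$ is circulant, then $\theta_{n,m,v}(C_n(xR))=\varphi_{n,x}(\theta_{n,m,v}(C_n(R)))$.
Let $K$ be the stabilizer of $C_n(R)$, and $L$ the subgroup of those $t$ for which $\theta_{n,m,t}(C_n(R))$ is circulant. By the transport rule, the $u$ with $\theta_{n,m,u}(C_n(R))\in T1_n(C_n(R))$ form a subgroup $M$ with $K\le M\le L$.
The implication holds for $C_n(R)$ exactly when $M=K$ or $M=L$.
Since $mL\subseteq K$, the quotient $L/K$ is cyclic of order dividing $m$. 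So the condition is automatic when $m$ is prime, which covers all of the paper's examples.
It fails for $m=4$ above, where $|L/K|=4$ and $|M/K|=2$.
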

\begin{proof} \quad When $T2_{n,m}(C_n(R))$ = $T2_{n,m}(C_n(S))$, $C_n(R)\in T2_{n,m}(C_n(S))$ and $C_n(S)\in T2_{n,m}($ $C_n(R))$. On the other hand, assume that $C_n(S)\in T2_{n,m}(C_n(R))$, $S \neq R$. Our aim is to prove that $T2_{n,m}(C_n(R))$ = $T2_{n,m}(C_n(S))$. That is to prove $T2_{n,m}(C_n(R))$ $\subseteq$ $T2_{n,m}(C_n(S))$ and $T2_{n,m}(C_n(S))$ $\subseteq$ $T2_{n,m}(C_n(R))$. Let $C_n(T)\in T2_{n,m}(C_n(R))$ for some $T$, $T \neq R$. Given, $C_n(S)\in T2_{n,m}(C_n(R))$, $S \neq R$. This implies, $C_n(T)$ = $\theta_{n,m,t}(C_n(R))$, $C_n(S)$ = $\theta_{n,m,t'}(C_n(R))$ (which implies, $C_n(R)$ = $\theta_{n,m,\frac{n}{m}-t'}(C_n(S))$ using property $(c)$,  $0 \leq t' \leq \frac{n}{m}-1$) for some $t$ and $t'$,  $C_n(T)$ and $C_n(R)$ are Type-2 isomorphic w.r.t. $m$ as well as $C_n(S)$ and $C_n(R)$, $S,T \neq R$ and $1 \leq t,t' \leq \frac{n}{m}-1$. This implies that $C_n(T)$ = $\theta_{n,m,t}(C_n(R))$ = $\theta_{n,m,t}(\theta_{n,m,\frac{n}{m}-t'}(C_n(S)))$ = $\theta_{n,m,t+(\frac{n}{m}-t')}(C_n(S))$ = $\theta_{n,m,t_2}(C_n(S))$ for some $t$,$t_1$ = $\frac{n}{m}-t'$ and $t_2$ = $t+(\frac{n}{m}-t')$ and $C_n(S)$, $C_n(T)$ and $C_n(R)$ are isomorphic of Type-2 w.r.t. $m$, $S,T \neq R$ and $1 \leq t,t',t_1,t_2 \leq \frac{n}{m}-1$. This implies, $C_n(T)$ = $\theta_{n,m,t_2}(C_n(S))$ for some $t_2$ and $C_n(T)$ and $C_n(S)$ are Type-2 isomorphic w.r.t. $m$, $1 \leq t_2 \leq \frac{n}{m}-1$. This implies, $C_n(T) \in T2_{n,m}((C_n(S)))$. This implies, $T2_{n,m}(C_n(R))$ $\subseteq$ $T2_{n,m}(C_n(S))$. Similarly, we can prove that $T2_{n,m}(C_n(S))$ $\subseteq$ $T2_{n,m}(C_n(R))$. This implies, $T2_{n,m}(C_n(R))$ = $T2_{n,m}(C_n(S))$.
\end{proof} 

\begin{enumerate}
\item [\rm (h)]  \label{r11i}  $T2_{n,m}(C_n(R))$ is closed under $`\circ'$ follows from property $(f)$ where $\circ$ is defined as $\forall$ $C_n(S)$ = $\theta_{n,m,t_1}(C_n(R)), C_n(T) = \theta_{n,m,t_2}(C_n(R))\in T2_{n,m}(C_n(R))$, $\theta_{n,m,t_1}(C_n(R)) \circ \theta_{n,m,t_2}(C_n(R))$ = $\theta_{n,m,t_3}(C_n(R))$, $t_3$ = $t_1+t_2$, $0 \leq t_1,t_2,t_3 \leq \frac{n}{m}-1$ and $t_3$ in $\theta_{n,m,t_3}$ is calculated under arithmetic modulo $\frac{n}{m}$. 
		
\item [\rm (i)]  \label{r11j} Let $C_n(R)$ $\cong$ $C_n(S)$, $R \neq S$ and $|R| = |S| \geq 3$. Then, either $T2_{n,m}(C_n(R))$ $\cap$ $T2_{n,m}(C_n(S))$ = $\emptyset$ or $T2_{n,m}(C_n(R))$ = $T2_{n,m}(C_n(S))$. This follows from property $(h)$.		
\end{enumerate}

$C_n(R)$ has Type-2 isomorphic circulant graph w.r.t. $m$ if and only if $T2_{n,m}(C_n(R))$ $\neq$ $\{C_n(R)\}$ if and only if $T2_{n,m}(C_n(R$ $)) \setminus \{C_n(R)\} \neq \emptyset$ if and only if $|T2_{n,m}(C_n(R))| > 1$ \cite{v20}. In the next theorem, we  prove that $(T2_{n,m}(C_n(R)), \circ)$  is a subgroup of $(V_{n,m}(C_n(R)), \circ)$. 

\begin{theorem} \label{t5.6} {\rm $(T2_{n,m}(C_n(R)),  \circ)$ is a subgroup of $(V_{n,m}(C_n(R)), \circ)$ where $r\in R$, $m > 1$ divides $\gcd(n,r)$ and $m^3$ divides $n$.}
\end{theorem}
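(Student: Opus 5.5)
The approach is the standard finite subgroup test. By Theorem \ref{ab19}, $(V_{n,m}(C_n(R)), \circ)$ is a finite Abelian group. It therefore suffices to show that $T2_{n,m}(C_n(R))$ is a nonempty subset of $V_{n,m}(C_n(R))$ that is closed under $\circ$; closure alone then gives a subgroup, since every element of a finite group has finite order. For robustness I would also check the identity and inverses directly. Associativity and commutativity are inherited from $V_{n,m}(C_n(R))$.

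\textbf{Inclusion and nonemptiness.} By the definition of the Type-2 set, every element of $T2_{n,m}(C_n(R))$ is either $C_n(R) = \theta_{n,m,0}(C_n(R))$ or $\theta_{n,m,t}(C_n(R))$ for some $1 \leq t \leq \frac{n}{m}-1$. Both kinds lie in $V_{n,m}(C_n(R))$ by Definition \ref{a19}. In particular $C_n(R) \in T2_{n,m}(C_n(R))$, so the set is nonempty and contains the identity of $V_{n,m}(C_n(R))$.

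\textbf{Closure.} This is property $(h)$, which the paper derives from property $(f)$. Concretely, let $t_3$ be the smallest positive $t$ with $\theta_{n,m,t}(C_n(R)) \in T2_{n,m}(C_n(R)) \setminus \{C_n(R)\}$. I would argue that the set of admissible parameters $t$ is exactly the set of multiples $it_3$ modulo $\frac{n}{m}$, where $t_3$ divides $\frac{n}{m}$. The product $\theta_{n,m,t_1}(C_n(R)) \circ \theta_{n,m,t_2}(C_n(R)) = \theta_{n,m,t_1+t_2}(C_n(R))$ then has parameter $(i_1+i_2)t_3 \bmod \frac{n}{m}$, which is again a multiple of $t_3$, so by $(f)$ the product lies in $T2_{n,m}(C_n(R))$. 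If $T2_{n,m}(C_n(R)) = \{C_n(R)\}$, it is the trivial subgroup and there is nothing to prove.

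\textbf{Inverses.} Property $(c)$ and Theorem \ref{ab19} identify the inverse of $\theta_{n,m,t}(C_n(R))$ as $\theta_{n,m,\frac{n}{m}-t}(C_n(R))$. If $t = it_3$, then $\frac{n}{m}-t = (q_3-i)t_3$, again a multiple of $t_3$, so the inverse lies in $T2_{n,m}(C_n(R))$ by $(f)$. Alternatively, one can appeal to the preceding theorem ($C_n(S) \in T2$ iff $C_n(R) \in T2(C_n(S))$) together with property $(b)$.

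\textbf{Main obstacle.} The delicate step is closure, since Type-2 membership is defined negatively: $\theta_{n,m,t}(C_n(R))$ must equal some circulant $C_n(S)$ with $S \neq xR$ for all $x \in \varphi_n$. A sum of two admissible parameters could a priori give a non-circulant image, or a Type-1 image of $C_n(R)$. I would lean on property $(f)$, which asserts that the admissible $t$ form the cyclic set generated by $t_3$. If needed, I would supplement it with this observation: by $(b)$, $\theta_{n,m,t_1+t_2}(C_n(R)) = \theta_{n,m,t_2}(C_n(S_1))$, which is circulant. If it coincided with some $C_n(xR)$ with $S \ne R$, this would conflict with the minimality structure in $(d)$–$(f)$. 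I would take $(f)$ and $(h)$ as given results of the paper at this step.
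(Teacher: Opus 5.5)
Your proposal has a genuine gap at closure, and the gap cannot be filled for general $m$. Your reduction to closure via the finite subgroup test is fine. But closure is exactly the step you take on trust: property (f), and (h) which is derived from it, asserts that every multiple of $t_3$ again gives a Type-2 image. The paper never proves this. Your remark that a Type-1 image at $t_1+t_2$ would ``conflict with the minimality structure in (d)--(f)'' does not work either, because minimality of $t_3$ among Type-2 parameters says nothing about $2t_3$.

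In fact (f), and the statement itself, fail for composite $m$. Take $n=64$, $m=4$, $R=\{1,8,15,17,31\}$, so $r=8$, $4\mid\gcd(64,8)$ and $4^3\mid 64$. Under $\theta_{64,4,t}$, jumps $\equiv 1 \pmod 4$ shift by $4t$, jumps $\equiv 3$ shift by $12t$, and $8$ is fixed. The odd elements of $\pm R$ form the two cosets $\pm1+16\mathbb{Z}_{64}$, so every image is circulant:
\begin{itemize}
\item $\theta_{64,4,1}(C_{64}(R))=C_{64}(5,8,11,21,27)$;
\item $\theta_{64,4,2}(C_{64}(R))=C_{64}(7,8,9,23,25)=C_{64}(9R)$;
\item $\theta_{64,4,3}(C_{64}(R))=C_{64}(3,8,13,19,29)$;
\item $\theta_{64,4,4}(C_{64}(R))=C_{64}(R)$.
\end{itemize}
For $t=1,3$ no $x\in\varphi_{64}$ gives a Type-1 match. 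Indeed $8x\equiv\pm 8 \pmod{64}$ forces $x\equiv\pm1\pmod 8$. But $x=x\cdot 1$ would then have to be an odd jump of the image, and all of those are $\equiv\pm3\pmod 8$.

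Hence $T2_{64,4}(C_{64}(R))=\{C_{64}(R),\,C_{64}(5,8,11,21,27),\,C_{64}(3,8,13,19,29)\}$ is a three-element subset of the group $V_{64,4}(C_{64}(R))$, which has order $4$. Explicitly, $\theta_{64,4,1}(C_{64}(R))\circ\theta_{64,4,1}(C_{64}(R))=C_{64}(9R)\notin T2_{64,4}(C_{64}(R))$. Here $t_3=1$, so (f) fails already at $2t_3$.

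The same example breaks the ``iff'' theorem you cite as a fallback: $9\cdot\{5,8,11,21,27\}\equiv\{3,8,13,19,29\}$, so the two Type-2 images are Type-1 isomorphic to each other. The paper's own proof invokes (f) in exactly the way you do, so it has the same gap.

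What is true, and what a correct argument must use in place of (f), is the following. Write $R_\sigma$ for the elements of $\pm R$ that are $\equiv\sigma\pmod m$.
\begin{itemize}
\item \textbf{When the image is circulant.} $\theta_{n,m,t}(C_n(R))$ is circulant iff $R_\sigma+tm^2=R_\sigma$ for every $\sigma\neq 0$. In that case it equals $C_n(S_t)$ with $\pm S_t=\bigcup_\sigma(R_\sigma+\sigma tm)$.
\item \textbf{The circulant parameters form a subgroup.} They form a subgroup $K\le\mathbb{Z}_{n/m}$ containing the stabilizer $H$, with $mK\subseteq H$, so $K/H$ is cyclic of order dividing $m$. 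Your worry about non-circulant images is therefore unfounded.
\item \textbf{Type-1 parameters are closed under multiples.} If $S_a=xR$, then $S_{ka}=x^kR$ for all $k$. So the Type-1 parameter set $L$, with $H\subseteq L\subseteq K$, is closed under multiples.
\item \textbf{When the Type-2 set is a subgroup.} The parameters of $T2_{n,m}(C_n(R))$ are $H\cup(K\setminus L)$. This is a subgroup iff $L=H$ or $L=K$.
\end{itemize}
For prime $m$ this is automatic, since $|K/H|\in\{1,m\}$. So the theorem, and an argument along these lines, holds for prime $m$, which covers every example in the paper ($m=2,3,5,7$). For composite $m$ it fails, as above, where $K/H\cong\mathbb{Z}_4$ and $L/H$ is its subgroup of order $2$.
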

\begin{proof}\quad $T2_{n,m}(C_n(R))$ is closed under $`\circ'$ by property $(i)$, $\theta_{n,m,0}(C_n(R))$ = $C_n(R)\in T2_{n,m}(C_n(R))$ is the identity element in $T2_{n,m}(C_n(R))$, $T2_{n,m}(C_n(R)) \subseteq V_{n,m}(C_n(R))$ and $(V_{n,m}(C_n(R)), \circ)$ is an Abelian group by Theorem \ref{ab19}. When $T2_{n,m}(C_n(R))$ = $\{ \theta_{n,m,0}(C_n(R)) = C_n(R) \},$ $(T2_{n,m}(C_n(R)), \circ)$ is the trivial subgroup of $(V_{n,m}(C_n(R)), \circ)$. When $T2_{n,m}(C_n(R))$ $\neq$ $\{ C_n(R) = \theta_{n,m,0}(C_n(R))\}$, then by property $(f)$, there exists $t_1$, the smallest positive integer value of $t$ in $\theta_{n,m,t}(C_{n}(R))$ $\ni$ $\theta_{n,m,t_1}(C_{n}(R))$ = $C_{n}(S_1)$ and $C_n(S_{1})\in T2_{n,m}(C_n(R))$ for some $S_1 \neq R$, $0 \leq t,t_1 \leq \frac{n}{m}-1$, $q_1t_1$ = $\frac{n}{m}$, $r\in R,S_1$,  $m > 1$ divides $\gcd(n,r)$ and $m^3$ divides $n$. Let $C_n(S),C_n(T)\in T2_{n,m}(C_n(R))$. Then by property $(f)$, there exists $i$ and $j$ $\ni$ $C_n(S)$ = $\theta_{n,m,it_1}(C_n(R))$ and $C_n(T)$ = $\theta_{n,m,jt_1}(C_n(R))$, $0 \leq t_1 \leq \frac{n}{m}-1$, $q_1t_1$ = $\frac{n}{m}$, $0 \leq i,j \leq q_1-1$ and $0 \leq it_1,jt_1 \leq \frac{n}{m}-1$. This implies, for $0 \leq t_1 \leq \frac{n}{m}-1$, $q_1t_1$ = $\frac{n}{m}$, $0 \leq i,j \leq q_1-1$ and $0 \leq it_1,jt_1 \leq \frac{n}{m}-1$, $\theta_{n,m,it_1}(C_n(R)), \theta_{n,m,jt_1}(C_n(R))\in T2_{n,m}(C_n(R))$. This implies, $\theta_{n,m,it_1}(C_n(R)), \theta_{n,m,(q_1-j)t_1}(C_n(R))\in T2_{n,m}(C_n(R))$ using property $(f)$, $q_1t_1$ = $\frac{n}{m}$, $0 \leq i,j \leq q_1-1$ and $0 \leq it_1,jt_1 \leq \frac{n}{m}-1$. This implies, $\theta_{n,m,it_1}(C_n(R)) \circ \theta_{n,m,(q_1-j)t_1}(C_n(R)) = \theta_{n,m,it_1+(q_1-j)t_1}(C_n(R)) = \theta_{n,m,kt_1}(C_n(R))\in T2_{n,m}(C_n(R))$ by properties $(f)$ and $(i)$ where $\theta_{n,m,(q_1-j)t_1}$ $(C_n(R))$ is the inverse element of $\theta_{n,m,jt_1}(C_n(R))$ = $C_n(T)$ in $T2_{n,m}(C_n(R))$, $q_1t_1$ = $\frac{n}{m}$, $k$ = $q_1-j+i$, $0 \leq i,j,k \leq q_1-1$ and $0 \leq it_1,jt_1,kt_1 \leq \frac{n}{m}-1$. This implies, $(T2_{n,m}(C_n(R)),  \circ)$ is a subgroup of $(V_{n,m}(C_n(R)), \circ)$ where $r\in R$, $m > 1$ divides $\gcd(n,r)$, $m^3$ divides $n$ and $m,n,r\in\mathbb{N}$.
\end{proof} 

\begin{definition} \quad \label{a21} With usual notation, group $(T2_{n,m}(C_n(R)), \circ)$ is called the Type-2 group of $C_n(R)$ w.r.t.  $m$.  
\end{definition}

In \cite{v2-1}, it is proved that circulant graphs $C_{16}(1,2,7)$ and $C_{16}(2,3,5)$ are Type-2 isomorphic w.r.t. $m$ = 2, $Ad_{16}(C_{16}(1,2,7))$ $\neq$ $Ad_{16}(C_{16}(2,3,5)),$ $V_{16,2}(C_{16}(1,2,7))$ = $V_{16,2}(C_{16}(2,3,5))$ and $T2_{16,2}(C_{16}(1,2,7))$ = $\{\theta_{16,2,t}(C_{16}(1,2,7)): t = 0,2\}$ = $\{\theta_{16,2,0}(C_{16}(2,3,5))$ = $C_{16}(2,3,5)$, $\theta_{16,2,2}(C_{16}(2,3,5))$ = $C_{16}(1,2,$ $7)\}$ = $\{\theta_{16,2,t}(C_{16}(2,3,5)): t = 0,2\}$ = $T2_{16,2}(C_{16}(2,3,5)).$ This also implies, $(T2_{16,2}(C_{16}(1,2,7)), \circ)$ = $(T2_{16,2}(C_{16}(2,3,5)), \circ)$ is the Type-2 group on $C_{16}(1,2,7)$ or on $C_{16}(2,3,5)$ w.r.t. $m$ = 2. In the following, we present more results on Type-2 groups on Type-2 isomorphic circulant graphs of $C_n(R)$ w.r.t. $m$ = 2,3,5,7.
	
Corresponding to Theorems \ref{a17c}, \ref{4.6}, \ref{4.8} and \ref{4.10}, we obtain Theorems \ref{t5.8}, \ref{t5.9}, \ref{t5.10} and \ref{t5.11} on Type-2 groups, respectively. These results are easy to prove using the definition of $\theta_{n,m,t_1} \circ \theta_{n,m,t_2}$ = $\theta_{n,m,t_1+t_2}$ where $t_1+t_2$ is calculated under $mod~ \frac{n}{m}$. Theorem \ref{t5.12} is a more general result on the existence of Type-2 isomorphic circulant graphs and its Abelian group.

\begin{theorem}{\rm \quad \label{t5.8} For $n \geq 2$, $k \geq 3$, $1 \leq 2s-1 \leq 2n-1$, $n \neq 2s-1$, $R$ = $\{2, 2s-1, 4n-(2s-1)\}$ and $S$ = $\{2, 2n-(2s-1), 2n+2s-1\}$, $C_{8n}(R)$ and $C_{8n}(S)$ are Type-2 isomorphic w.r.t. $m$ = 2,  $T2_{8n,2}(C_{8n}(R))$ = $T2_{8n,2}(C_{8n}(S))$ = $\{C_{8n}(R), C_{8n}(S)\}$ and $(T2_{8n,2}(C_{8n}(R)), \circ)$ = $(T2_{8n,2}(C_{8n}(S)), \circ)$ is a Type-2 group, $n,s\in\mathbb{N}$. \hfill $\Box$ }
\end{theorem}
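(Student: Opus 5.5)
The plan is to compute the whole orbit $V_{8n,2}(C_{8n}(R))$ explicitly and then keep the non-identity members that are Type-2 isomorphic to $C_{8n}(R)$.

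\textbf{Known transforms.} By Theorem \ref{a17c} (equivalently Theorem \ref{t4.8}) we already have
$\theta_{8n,2,n}(C_{8n}(R)) = C_{8n}(S) = \theta_{8n,2,3n}(C_{8n}(R))$, $\theta_{8n,2,2n}(C_{8n}(R)) = C_{8n}(R)$, and $C_{8n}(R)$, $C_{8n}(S)$ are Type-2 isomorphic w.r.t. $m=2$. Thus $C_{8n}(S)\in T2_{8n,2}(C_{8n}(R))$, and by the preceding theorem $T2_{8n,2}(C_{8n}(R)) = T2_{8n,2}(C_{8n}(S))$.

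\textbf{Determining the full $T2$ set.} Here $\frac{n}{m} = 4n$, so $t$ ranges over $\{0,\dots,4n-1\}$. I will show that $\theta_{8n,2,t}(C_{8n}(R))$ is circulant only when $t \in \{0, n, 2n, 3n\}$. Since $2$ is a multiple of $m$, it is fixed (Remark \ref{r12}). The odd jumps $a = 2s-1$ and $4n-a$ must be mapped by $\theta_{8n,2,t}$ to jump sizes that do not depend on the starting vertex. An odd vertex $x = 2q+1$ is shifted by $2t$ and an even vertex is fixed, so an edge of odd length $a$ from an even vertex becomes length $a+2t$, and from an odd vertex becomes $a-2t$.

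The symmetric equidistance condition therefore requires $\{\pm(a+2t), \pm(4n-a+2t)\} = \{\pm(a-2t), \pm(4n-a-2t)\}$ modulo $8n$. Solving this forces $4t \equiv 0$ or $2a+4t$-type relations modulo $8n$, i.e.\ $t \in \{0, n, 2n, 3n\}$; this is exactly the computation behind the necessity part of Theorem \ref{t4.8}, which I will invoke rather than redo, taking $s' $ so that $2s-1+2s'-1 = 4n$ in modulus $8n$. This case analysis is the main obstacle. If I must redo it rather than cite it, I would split according to whether $a+2t \equiv \pm(a-2t)$ or $a+2t \equiv \pm(4n-a-2t)$, and use $1\le a\le 2n-1$ together with $a\ne n$ to exclude the extra solutions.

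\textbf{Group conclusion.} Combining the two steps gives $T2_{8n,2}(C_{8n}(R)) = \{\theta_{8n,2,t}(C_{8n}(R)) : t = 0, n, 2n, 3n\} = \{C_{8n}(R), C_{8n}(S)\}$, and $C_{8n}(R)\ne C_{8n}(S)$ because $n\ne 2s-1$. The group structure then follows from Theorem \ref{t5.6}. Concretely, under $\theta_{8n,2,t_1}\circ\theta_{8n,2,t_2} = \theta_{8n,2,t_1+t_2 \pmod{4n}}$, the graph $C_{8n}(R)$ is the identity and $C_{8n}(S)\circ C_{8n}(S) = \theta_{8n,2,2n}(C_{8n}(R)) = C_{8n}(R)$. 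Hence $(T2_{8n,2}(C_{8n}(R)),\circ)$ is the group of order $2$, equal to $(T2_{8n,2}(C_{8n}(S)),\circ)$.
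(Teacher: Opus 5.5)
\textbf{Overall route.} Your outline is the paper's route. The paper takes $\theta_{8n,2,n}(C_{8n}(R))=C_{8n}(S)=\theta_{8n,2,3n}(C_{8n}(R))$ and $\theta_{8n,2,2n}(C_{8n}(R))=C_{8n}(R)$ from Theorem~\ref{a17c}, and reads off the group from $\theta_{n,m,t_1}\circ\theta_{n,m,t_2}=\theta_{n,m,t_1+t_2}$. It never argues that no other $t$ yields a Type-2 image.

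Your extra exhaustiveness step is the right thing to add. Citing the necessity half of Theorem~\ref{t4.8} does close it: that result forces $t\in\{n,3n\}$ for any Type-2 image. This is enough for $T2$, though it does not give the stronger circulance claim you state. However, the computation you call ``exactly the computation behind'' that theorem is wrong, and your fallback of redoing it would fail.

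\textbf{The displayed condition is not the circulance criterion.} Take $n=2$, $a=1$, $t=2$. Your identity $\{\pm(a+2t),\pm(4n-a+2t)\}=\{\pm(a-2t),\pm(4n-a-2t)\}$ then reads $\{5,11\}=\{3,13\}$ in $\mathbb{Z}_{16}$, which is false. Yet $\theta_{16,2,2}(C_{16}(1,2,7))=C_{16}(2,3,5)$ is circulant.

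In general your identity leaves only $t\in\{0,2n\}$. Indeed, $a+2t$ can only equal $a-2t$:
\begin{itemize}
\item $a+2t\equiv -(a-2t)$ needs $2a\equiv 0$, which fails;
\item $a+2t\equiv 4n-a-2t$ fails by parity;
\item $a+2t\equiv -(4n-a-2t)$ needs $4n\equiv 0$, which fails.
\end{itemize}
So your planned case split would contradict Theorem~\ref{a17c}.

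The error is a sign placement. At an even vertex the jumps $\pm a$ become $2t\pm a$, not $\pm(a+2t)$.

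\textbf{Correct criterion.} Put $A=\{\pm a,\,4n\pm a\}\subset\mathbb{Z}_{8n}$, using $-(4n-a)\equiv 4n+a$.
\begin{itemize}
\item An even vertex $x$ has its odd-jump neighbours at $x+2t+A$.
\item The image of an odd vertex has its odd-jump neighbours at offsets $-2t+A$.
\item Jump $2$ is unchanged.
\end{itemize}
Hence $\theta_{8n,2,t}(C_{8n}(R))$ is circulant if and only if $A+4t=A$.

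\textbf{Solving it.} The four elements of $A$ are distinct, and $A+4n=A$. So the stabiliser $H$ of $A$ is a subgroup containing $\{0,4n\}$ whose order divides $4$. If $|H|=4$, then $H=\{0,2n,4n,6n\}$ and $A$ is a single coset of it. That forces $2a\in\{0,2n,4n,6n\}$, which for odd $a$ with $1\le a\le 2n-1$ means $a=n$, excluded by hypothesis. Therefore $H=\{0,4n\}$, so $4t\in\{0,4n\}$ and $t\in\{0,n,2n,3n\}$.

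This is exactly your claim, now with a valid proof that does not need Theorem~\ref{t4.8}. The rest of your argument, including the order-$2$ group computation, then goes through.
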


\begin{theorem}  \label{t5.9} {\rm For $R$ = $\{1, 3, 9n-1, 9n+1\}$, $S$ = $\{3, 3n+1, 6n-1, 12n+1\}$, $T$ = $\{3, 3n-1, 6n+1$, $12n-1\}$ and $n\in\mathbb{N}$, $C_{27n}(R)$, $C_{27n}(S)$ and $C_{27n}(T)$ are Type-2 isomorphic circulant graphs w.r.t. $m$ = 3, $T2_{27n,3}(C_{27n}(R))$ = $T2_{27n,3}(C_{27n}(S))$ = $T2_{27n,3}(C_{27n}(T))$ = $\{C_{8n}(R), C_{8n}(S), C_{8n}(T)\}$ and $(T2_{27n,3}(C_{27n}(R)), \circ)$ = $(T2_{27n,3}(C_{27n}(S)), \circ)$ = $(T2_{27n,3}(C_{27n}(S)), \circ)$ is a Type-2 group. \hfill $\Box$}
\end{theorem}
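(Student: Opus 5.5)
We will derive Theorem \ref{t5.9} from Theorem \ref{4.6}, the group structure of $V_{27n,3}$ given by Definition \ref{a19} and Theorem \ref{ab19}, and the subgroup result Theorem \ref{t5.6}. Here $m=3$ and the ambient order is $27n$, so $m^3=27$ divides $27n$. The element $3$ lies in $R$, $S$ and $T$, and $3$ divides $\gcd(27n,3)$. Hence the standing hypotheses of this section hold for all three graphs. (The set written as $\{C_{8n}(R), C_{8n}(S), C_{8n}(T)\}$ in the statement is evidently meant to be $\{C_{27n}(R),C_{27n}(S),C_{27n}(T)\}$, and we prove that version.)

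First we determine the orbit of $C_{27n}(R)$ under $\theta_{27n,3,t}$ for $t$ a multiple of $n$. Theorem \ref{4.6} gives $\theta_{27n,3,n}(C_{27n}(R))=C_{27n}(S)$, $\theta_{27n,3,n}(C_{27n}(S))=C_{27n}(T)$ and $\theta_{27n,3,n}(C_{27n}(T))=C_{27n}(R)$. Using property (b) with $\theta_{n,m,t}\circ\theta_{n,m,t'}=\theta_{n,m,t+t'}$ (indices taken mod $\frac{27n}{3}=9n$), we get $\theta_{27n,3,2n}(C_{27n}(R))=C_{27n}(T)$ and $\theta_{27n,3,3n}(C_{27n}(R))=C_{27n}(R)$. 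Property (g) then shows that the orbit is periodic in steps of $3n$. So along $\{0,n,2n,\dots,8n\}$ exactly the three graphs $C_{27n}(R),C_{27n}(S),C_{27n}(T)$ occur, in cyclic order.

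Next we show that these three graphs are pairwise Type-2 isomorphic and that no other graph is. Theorem \ref{4.6} already asserts that they are pairwise Type-2 isomorphic w.r.t. $3$, which gives $\{C_{27n}(R),C_{27n}(S),C_{27n}(T)\}\subseteq T2_{27n,3}(C_{27n}(R))$. For the reverse inclusion we must check that $\theta_{27n,3,t}(C_{27n}(R))$ is not a circulant other than these three for every $t$, including $t$ not a multiple of $n$. The plan is to compute directly the images of the non-multiples of $3$ in $R$, namely $1,\,9n-1,\,9n+1$. Under $\theta_{27n,3,t}$ an edge of jump $r$ starting at a vertex with residue $j$ becomes a jump $r+3t(j'-j)$, where $j'$ is the residue of the endpoint. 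For the image to be circulant, the resulting jump must be independent of $j$, up to sign. We would impose this independence on the jumps $1$ and $9n\pm1$ and read off the constraint that $3t$ be a multiple of $3n$. This is the same kind of symmetric-equidistance argument that Theorem \ref{t4.8} uses for $m=2$. Note that $27n/3 = 9n$, so the admissible $t$ should come out as $0,n,2n,\dots,8n$.

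This verification for general $t$ is the main obstacle. If it becomes too messy, we would instead appeal to property (f): if $t_1$ is the least positive $t$ giving a Type-2 image, then $t_1$ divides $9n$ and the images cycle with period $t_1$. We would then show that no $t<n$ works by evaluating the image of jump $1$ at residues $j=0,1,2$, namely $1$, $1+3t$ and $1-6t$. These must coincide up to sign mod $27n$, which already forces $9t\equiv0$ or similar and so excludes small $t$.

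With $T2_{27n,3}(C_{27n}(R))=\{C_{27n}(R),C_{27n}(S),C_{27n}(T)\}$ established, the theorem preceding property (h), the analogue of Theorem \ref{a7b}, gives equality of the three Type-2 sets. Theorem \ref{t5.6} then shows that this set is a subgroup of $(V_{27n,3}(C_{27n}(R)),\circ)$, and it is cyclic of order $3$, generated by $\theta_{27n,3,n}$.
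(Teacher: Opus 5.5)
Your orbit computation and the group-theoretic conclusion match the paper. Its entire argument is Theorem \ref{4.6} plus $\theta_{n,m,t_1}\circ\theta_{n,m,t_2}=\theta_{n,m,t_1+t_2}$ with indices mod $9n$, and it never addresses the reverse inclusion $T2_{27n,3}(C_{27n}(R))\subseteq\{C_{27n}(R),C_{27n}(S),C_{27n}(T)\}$. You are right that this inclusion needs an argument, but the criterion you propose for it is false.

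The image $\theta_{27n,3,t}(C_{27n}(R))$ is circulant when the whole set of jumps seen from a vertex of residue $j$ is the same for $j=0,1,2$. It is not required that each individual jump have a $j$-independent image up to sign. Already at $t=n$ the jump $1$ becomes $3n+1$ at residues $0,1$ but $1-6n\equiv-(6n-1)$ at residue $2$, and yet the image is $C_{27n}(S)$: images of different jumps trade places across residue classes. Your fallback shows the problem concretely. The condition $1+3t\equiv\pm(1-6t)\pmod{27n}$ forces either $3t\equiv 2$ (impossible) or $9t\equiv 0\pmod{27n}$, i.e. $3n\mid t$. That would exclude $t=n$ and $t=2n$ and contradict Theorem \ref{4.6}. (Also, the image of jump $1$ at $j=0$ is $1+3t$, not $1$.)

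You can repair the step by working with whole cosets. Let $H=\{0,9n,18n\}\subseteq\mathbb{Z}_{27n}$; the jumps of $C_{27n}(R)$ other than $\pm3$ are exactly $(1+H)\cup(-1+H)$. Under $\theta_{27n,3,t}$, a jump $\equiv1\pmod 3$ taken from a vertex of residue $j=0,1,2$ is shifted by $3t,3t,-6t$ respectively. A jump $\equiv2\pmod 3$ is shifted by $6t,-3t,-3t$. Put $A=1+3t+H$ and $B=6t-1+H$. Then the jump sets at residues $0,1,2$ are $A\cup B$, $A\cup(-A)$ and $(-B)\cup(-A)$, each together with $\pm3$. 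The sets $A$ and $-B$ lie in residue class $1$ and $B$ and $-A$ in residue class $2$ mod $3$. Hence the image is circulant iff $B=-A$, i.e. iff $9t\equiv0\pmod{9n}$, i.e. iff $n\mid t$.

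Combined with your orbit computation, this yields exactly the three graphs. The same holds for $S$ and $T$ because $\theta_{27n,3,t}(C_{27n}(S))=\theta_{27n,3,t+n}(C_{27n}(R))$. The Type-2 set is then the image of the subgroup $\{0,n,\dots,8n\}$ of $\mathbb{Z}_{9n}$, so it is cyclic of order $3$ with no need for property (f) or Theorem \ref{t5.6}.
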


\begin{theorem}  \label{t5.10} {\rm For $i$ = 1 to 5, $R_i$ = $\{5, d_i, 25n-d_i, 25n+d_i, 50n-d_i, 50n+d_i\}$, $d_i$ = $5n(i-1)+1$ and $n\in\mathbb{N}$, $C_{125n}(R_{i})$ are Type-2 isomorphic circulant graphs w.r.t. $m$ = 5, $T2_{125n,5}(C_{125n}(R_i))$ = $\{C_{125n}(R_j): j = 1,2,3,4,5\}$ and $(T2_{125n,5}(C_{125n}(R_i)), \circ)$ is a Type-2 group.  		\hfill $\Box$}
\end{theorem}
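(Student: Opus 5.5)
The proof rests on Theorem \ref{4.8}, which gives $\theta_{125n,5,jn}(C_{125n}(R_i)) = C_{125n}(R_{i+j})$ for $i,j = 1,\dots,5$, with indices taken modulo 5. The remaining ingredients are the definitions of $V_{n,m}(C_n(R))$ and $T2_{n,m}(C_n(R))$, the composition rule $\theta_{n,m,t}\circ\theta_{n,m,t'} = \theta_{n,m,t+t'}$ (indices modulo $\frac{n}{m} = 25n$), and Theorem \ref{t5.6}.

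\emph{Step 1 (hypotheses).} Take $m=5$ and $r=5\in R_i$. Then $5$ divides $\gcd(125n,5)$ and $5^3$ divides $125n$, so $\theta_{125n,5,t}$, $V_{125n,5}(C_{125n}(R_i))$ and $T2_{125n,5}(C_{125n}(R_i))$ are defined for every $i$.

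\emph{Step 2 ($\supseteq$).} By Theorem \ref{4.8}, for each $j\neq 0 \pmod 5$ the graph $\theta_{125n,5,jn}(C_{125n}(R_i)) = C_{125n}(R_{i+j})$ is Type-2 isomorphic to $C_{125n}(R_i)$. Together with $j=0$, this places all five graphs $C_{125n}(R_1),\dots,C_{125n}(R_5)$ in $T2_{125n,5}(C_{125n}(R_i))$. The theorem following property (g) (the Type-2 analogue of Theorem \ref{a7b}) then shows that all five Type-2 sets coincide.

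\emph{Step 3 ($\subseteq$, the main obstacle).} We must show that no other $t$ with $0\le t\le 25n-1$ produces a circulant graph of the form $C_{125n}(S)$ different from these five. For $t\notin n\mathbb{Z}$, write $t = an + b$ with $0<b<n$ and compute $\theta_{125n,5,t}$ on the jumps in $R_i$ that are not multiples of $5$. The element $d_i\equiv 1 \pmod 5$ gives $j=1$. By Remark \ref{r12}, the jump $5$ is unaffected. Edges of period $d_i$ starting at vertices $x$ with residues $j$ and $j+1$ are shifted by $5t$ and $5(j+1)t - 5jt = 5t$ respectively, so the jump sizes become $d_i + 5t$ or $d_i - 20t$ depending on whether $j+1$ wraps past $4$. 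For the image to be circulant, the symmetric equidistance condition (Theorem 5.2 of \cite{v2-1}, quoted before property (d)) forces the resulting differences $\pm 25t$ to permute the set $\{d_i, 25n\pm d_i, 50n\pm d_i\}$ modulo $125n$. That set is invariant only under shifts by multiples of $25n$, i.e.\ $t\equiv 0 \pmod n$.

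\emph{Step 4 (remaining cases and group structure).} For $t = jn + 5n\ell$, the orbit structure of property (g) (with $\theta_{125n,5,5n}$ fixing each $C_{125n}(R_i)$) reduces the analysis to the case already covered by Theorem \ref{4.8}. Hence $T2_{125n,5}(C_{125n}(R_i)) = \{C_{125n}(R_j): j = 1,\dots,5\}$. Finally, Theorem \ref{t5.6} shows that $(T2_{125n,5}(C_{125n}(R_i)),\circ)$ is a subgroup of the Abelian group $(V_{125n,5}(C_{125n}(R_i)),\circ)$; explicitly, it is cyclic of order $5$, generated by $\theta_{125n,5,n}(C_{125n}(R_i))$. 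I expect Step 3 to take the most effort. The first attempt would be to track only the image of the single jump $d_i=1$ when $i=1$, and then handle general $i$ by the same residue argument.
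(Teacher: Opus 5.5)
Your argument is correct, and its skeleton matches the paper's. The paper only remarks that the result is ``easy to prove'' from Theorem~\ref{4.8} and the rule $\theta_{n,m,t_1}\circ\theta_{n,m,t_2}=\theta_{n,m,t_1+t_2}$ (indices mod $\frac{n}{m}$), with Theorem~\ref{t5.6} supplying the subgroup property.

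What you add is Steps~3--4, the inclusion $T2_{125n,5}(C_{125n}(R_i))\subseteq\{C_{125n}(R_j): 1\le j\le 5\}$. The paper never argues this; it tacitly treats the five graphs of Theorem~\ref{4.8} as the only Type-2 images. Your residue argument does settle it, but it should be stated on the correct set.

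\begin{itemize}
\item Since $\theta_{125n,5,t}$ preserves residues mod $5$, write $H=25n\mathbb{Z}_{125n}$. The residue-$1$ part of $R_i\cup(125n-R_i)$ is exactly the coset $d_i+H$.
\item Hence the residue-$1$ neighbour differences of an image vertex are $d_i+5t+H$ at vertices $\not\equiv 4$, and $d_i-20t+H$ at vertices $\equiv 4 \pmod 5$.
\item For the image to be circulant these cosets must coincide. That forces $25t\in H$, i.e.\ $25t\equiv 25nk \pmod{125n}$, i.e.\ $n\mid t$.
\end{itemize}

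The five-element set $\{d_i,25n\pm d_i,50n\pm d_i\}$ you wrote is not itself invariant under translation by $25n$: for example $50n-d_i\mapsto 75n-d_i$, which is not in it. So phrase Step~3 with the full coset $d_i+H$. This direct comparison of neighbourhoods also makes the appeal to Theorem~5.2 of \cite{v2-1} unnecessary.

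Step~4 is fine: $\theta_{125n,5,5n}$ fixing each $C_{125n}(R_i)$ is just the case $j=5$ of Theorem~\ref{4.8}. Your identification of the group as cyclic of order $5$, generated by $\theta_{125n,5,n}(C_{125n}(R_i))$, is correct and agrees with Theorem~\ref{t5.12}.
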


\begin{theorem}  \label{t5.11} {\rm For $i$ = 1 to 7, $R_i$ = $\{7, d_i, 49n-d_i, 49n+d_i, 98n-d_i, 98n+d_i, 147n-d_i, 147n+d_i\}$, $d_i$ = $7n(i-1)+1$ and $n\in\mathbb{N}$, $C_{343n}(R_{i})$ are Type-2 isomorphic circulant graphs w.r.t. $m$ = 7, $T2_{343n,7}(C_{343n}(R_i))$ = $\{C_{343n}(R_j): j = 1,2,\dots,7\}$ and $(T2_{343n,7}(C_{343n}(R_i)), \circ)$ is a Type-2 group. \hfill $\Box$}
\end{theorem}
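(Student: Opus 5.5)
The plan for Theorem \ref{t5.11} is to mirror the arguments behind Theorems \ref{t5.9} and \ref{t5.10}: read off the orbit of $C_{343n}(R_1)$ under $V_{343n,7}$ from Theorem \ref{4.10}, and then apply Theorem \ref{t5.6} to get the group structure. First I check the hypotheses of Definition \ref{a19}. We have $7\in R_i$, $m=7$ divides $\gcd(343n,7)=7$, and $m^3=343$ divides $343n$. So $\theta_{343n,7,t}$ is defined for $0\le t\le 49n-1$, and by Theorem \ref{ab19} the set $V_{343n,7}(C_{343n}(R_1))$ is an Abelian group under $\circ$, with $t$ taken modulo $49n$.

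Next I identify the relevant elements. Theorem \ref{4.10} gives $\theta_{343n,7,jn}(C_{343n}(R_i))=C_{343n}(R_{i+j})$, with the index taken modulo $7$. Property (b), i.e.\ $\theta_{343n,7,t}\circ\theta_{343n,7,t'}=\theta_{343n,7,t+t'}$, then gives
\[
\theta_{343n,7,(j-1)n}(C_{343n}(R_1))=C_{343n}(R_j)\quad (1\le j\le 7),
\qquad
\theta_{343n,7,7n}(C_{343n}(R_1))=C_{343n}(R_1).
\]
By property (g), the same identities hold with $(j-1)n$ replaced by $(j-1)n+7kn$ for every $k$. Theorem \ref{4.10} also states that the $C_{343n}(R_j)$ are Type-2 isomorphic to one another w.r.t. $m=7$. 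In particular, they are pairwise distinct and none of them is of the form $C_{343n}(xR_1)$ with $x\in\varphi_{343n}$. Hence $\{C_{343n}(R_j):1\le j\le 7\}\subseteq T2_{343n,7}(C_{343n}(R_1))$.

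The main obstacle is the reverse inclusion: every $t$ for which $\theta_{343n,7,t}(C_{343n}(R_1))$ is a circulant graph Type-2 isomorphic to $C_{343n}(R_1)$ must be a multiple of $n$. To handle it I would compute $\theta_{343n,7,t}$ directly on $d_1=1$ and on the jumps $49n\pm1$, $98n\pm1$, $147n\pm1$, each of which is $\equiv \pm1 \pmod 7$. Adding $jt\cdot 7$ to the endpoints of these edges, with $j\in\{1,6\}$ the residue class of the starting vertex, produces image edges of lengths $1+7t$, $1-7t$ and the like. A circulant image requires the symmetric equidistance condition (Theorem 5.2 of \cite{v2-1}), which forces the shifted set of lengths to close up modulo $343n$. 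That in turn forces $7t$ to be a multiple of $7n$, i.e.\ $n\mid t$, so the only circulant images are the seven graphs $C_{343n}(R_j)$. If this direct computation becomes messy, a fallback is to invoke the completeness statement for $m=7$ implicit in \cite{v24}, analogous to the one recorded in Theorem \ref{a17d}.

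With the orbit determined, $T2_{343n,7}(C_{343n}(R_i))=\{C_{343n}(R_j):1\le j\le 7\}$, and by Theorem \ref{a7b}-type reasoning (the Type-2 analogue proved above) this set is the same for every $i$. Finally, Theorem \ref{t5.6} shows that $(T2_{343n,7}(C_{343n}(R_i)),\circ)$ is a subgroup of $V_{343n,7}(C_{343n}(R_i))$, so it is a Type-2 group. Concretely it is cyclic of order $7$, generated by $\theta_{343n,7,n}(C_{343n}(R_i))$, and its composition law is $C_{343n}(R_a)\circ C_{343n}(R_b)=C_{343n}(R_{a+b-1})$ with the index taken modulo $7$.
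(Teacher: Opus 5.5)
Your route is the paper's: it gets this theorem in one line from Theorem~\ref{4.10} and the rule $\theta_{n,m,t_1}\circ\theta_{n,m,t_2}=\theta_{n,m,t_1+t_2}$, with Theorem~\ref{t5.6} supplying the subgroup. The paper never addresses the reverse inclusion $T2_{343n,7}(C_{343n}(R_1))\subseteq\{C_{343n}(R_j):1\le j\le 7\}$, which you rightly single out, so that step is a genuine addition. As written, however, its computation does not support the conclusion you draw.

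Here is the correct computation. Let $\langle 49n\rangle=\{0,49n,\dots,294n\}\subseteq\mathbb{Z}_{343n}$. The symmetrised jump set of $C_{343n}(R_1)$ is $\{\pm7\}\cup(1+\langle 49n\rangle)\cup(-1+\langle 49n\rangle)$. The map $\theta_{343n,7,t}$ fixes $v_0$ and moves $v_x$ by $7jt$, where $j\equiv x \pmod 7$. So the neighbours of $v_0$ in class $1$ shift by $7t$, while those in class $6$ shift by $42t$, not by $-7t$. The neighbourhood of $u_0$ in the image is therefore $\{\pm7\}\cup(1+7t+\langle 49n\rangle)\cup(-1+42t+\langle 49n\rangle)$, and the lengths to be matched are $1+7t$ and $1-42t$.

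Closure under negation is necessary for the image to be some $C_{343n}(S)$. It gives $49t\in\langle 49n\rangle$, i.e.\ $n\mid t$. Matching $1+7t$ against $1-7t$, as you wrote, would instead require $14t\in\langle 49n\rangle$. That already fails at $t=n$, contradicting Theorem~\ref{4.10}. In the edge picture the relevant split is between edges of jump $s\equiv1$ leaving class $6$ (they wrap into class $0$ and become $s-42t$) and all other such edges (which become $s+7t$). It is not a split between starting classes $1$ and $6$.

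Conversely, for $t=ln$ the two cosets become $d_{l+1}+\langle 49n\rangle$ and $-d_{l+1}+\langle 49n\rangle$, with the index taken mod $7$. So the circulant images are exactly the seven $C_{343n}(R_j)$. With this in place you need no fallback to an unstated completeness result in \cite{v24}. The subgroup property is then immediate, since $T2$ is the image of the subgroup $\{ln\}$ of $\mathbb{Z}_{49n}$.

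There are also two smaller slips.
\begin{enumerate}
\item $C_{343n}(R_1)=C_{343n}(1\cdot R_1)$, so your remark that none of the graphs is of the form $C_{343n}(xR_1)$ must exclude $j=1$.
\item The law $C_{343n}(R_a)\circ C_{343n}(R_b)=C_{343n}(R_{a+b-1})$ holds only for $T2_{343n,7}(C_{343n}(R_1))$. In $(T2_{343n,7}(C_{343n}(R_i)),\circ)$ the identity is $\theta_{343n,7,0}(C_{343n}(R_i))=C_{343n}(R_i)$, and the law is $C_{343n}(R_a)\circ C_{343n}(R_b)=C_{343n}(R_{a+b-i})$. The underlying sets agree for all $i$, but the operations do not.
\end{enumerate}
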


\begin{theorem} \cite{v2-9}  \label{t5.12} {\rm Let $p$ be an odd prime number, $1 \leq i \leq p$, $1 \leq x \leq p-1$, $y\in\mathbb{N}_0$, $0 \leq y \leq np - 1$, $1 \leq x+yp \leq np^2-1$, $d^{np^3, x+yp}_i$ = $(i-1)xpn+$ $x+yp$ and $R^{np^3, x+yp}_i$ = $\{p$, $d^{np^3,x+yp}_i$, $np^2-d^{np^3,x+yp}_i$, $np^2+d^{np^3, x+yp}_i$, $2np^2-$ $d^{np^3, x+yp}_i$, $2np^2+d^{np^3, x+yp}_i$, $3np^2-d^{np^3, x+yp}_i$, $3np^2+d^{np^3, x+yp}_i$, . . . , $(p-1)np^2$ - $d^{np^3, x+yp}_i$, $(p-1)np^2+d^{np^3, x+yp}_i$, $np^3-d^{np^3, x+yp}_i$, $np^3-p\}$. Then, for $i$ = 1 to $p$, $T2_{np^3, p}(C_{np^3}(R^{np^3, x+yp}_i))$ = $\{\theta_{np^3,p,jn}(C_{np^3}(R^{np^3,x+yp}_i))$ = $C_{np^3}(R^{np^3,x+yp}_{i+j}) :$ $j$ = $0,1,...,p-1$ and $i+j$ in $C_{np^3}(R^{np^3,x+yp}_{i+j})$ is calculated under addition modulo $p \}$ and $(T2_{np^3, p}(C_{np^3}(R^{np^3, x+yp}_i)), \circ)$ is a Type-2 group of order $p$.  \hfill $\Box$   }
\end{theorem}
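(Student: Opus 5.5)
The plan is to compute $\theta_{np^3,p,jn}$ explicitly on the generic jump $a = (\text{multiple of } p) + x$ and reduce everything to a shift of the index $i$ modulo $p$. Write $N = np^3$ and $d_i = d^{np^3,x+yp}_i = (i-1)xpn + x + yp$. Since $\gcd(x,p)=1$, we have $d_i \equiv x \pmod p$. Hence every element of $R_i := R^{np^3,x+yp}_i$ other than $p$ and $np^3-p$ has the form $kp\cdot np \pm d_i$ (or $np^3 - d_i$). Such an element is congruent to $\pm x$ modulo $p$, so it is not a multiple of $p$. By Remark \ref{r12}, only these jumps change under $\theta_{N,p,t}$; $p$ and $N-p$ are fixed.

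The first step is to understand the edges. By Definition \ref{a19}, an edge $(v_z, v_{z+a})$ with $z = qp + j_0$ goes to $(u_{z+j_0 pt}, u_{z+a+j_1 pt})$, where $j_1 \equiv j_0 + a \pmod p$. The new difference is therefore $a + (j_1 - j_0)pt$. Here $j_1 - j_0$ equals $a \bmod p$, possibly minus $p$, and the extra $p\cdot pt$ vanishes modulo $N$ exactly when $p^2 t \equiv 0 \pmod{N}$. So I take $t = jn$, which gives $p^2 t = jnp^2$. This is a multiple of $np^2$ but not of $np^3$, so the wrap-around term $\pm np^2\cdot j$ does not vanish. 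It does, however, only permute the family $\{knp^2 \pm d_i\}$, since those jumps are exactly the translates of $\pm d_i$ by multiples of $np^2$. I would verify this carefully: for the jump $a = knp^2 + d_i$ with residue $x$, starting vertices with $j_0 < p - x$ give difference $a + xpjn$, and those with $j_0 \ge p - x$ give $a + xpjn - np^2 j$. Since $d_{i+j} = d_i + jxpn$, both differences lie in the set $\{k'np^2 \pm d_{i+j}\}$ for $k'$ ranging over $0,\dots,p-1$, taken under reflexive modulo $N$. The negative jumps $knp^2 - d_i$ have residue $p - x$ and are handled symmetrically, giving $k'np^2 - d_{i+j}$. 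The union over all $k$ reproduces the full family for index $i+j$, and reducing $d_{i+j}$ when $i+j > p$ works out because $d_{i+p} = d_i + xp^2 n$ is again a translate by a multiple of $np^2$. This yields $\theta_{N,p,jn}(C_N(R_i)) = C_N(R_{i+j})$ with the index taken modulo $p$.

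The second step is to conclude the group structure from $\theta_{N,p,jn}\circ\theta_{N,p,j'n} = \theta_{N,p,(j+j')n}$ (Definition \ref{a19}, property (b)). The map $j \mapsto C_N(R_{i+j})$ identifies the set with $\mathbb{Z}_p$, so by Theorem \ref{t5.6} it is a subgroup of $V_{N,p}(C_N(R_i))$, cyclic of order $p$ provided the $R_{i+j}$ are pairwise distinct. Distinctness holds because the smallest non-multiple-of-$p$ element, namely $d_{i+j}$ reduced modulo $np^2$, takes $p$ different values as $j$ varies.

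The remaining step, which I expect to be the main obstacle, is to show that these are exactly the Type-2 partners. For Type-2 we need $R_{i+j} \ne zR_i$ for every $z \in \varphi_N$. I would argue by comparing the structure modulo $np^2$: a unit $z$ must fix the multiples of $p$, i.e.\ $zp \equiv \pm p$, so $z \equiv \pm 1 \pmod{np^2}$. Then $z$ maps the coset family of $d_i$ to that of $\pm d_i$, never to $d_{i+j}$ with $j \ne 0$. For the reverse inclusion, that no other $t$ (not a multiple of $n$) yields a circulant, I would invoke the symmetric equidistance characterization (Theorem 5.2 of \cite{v2-1}) together with property (f). The idea is that a $t$ not divisible by $n$ produces a non-uniform difference set, which is not of the form $C_N(S')$.
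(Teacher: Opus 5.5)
The paper gives no proof of this theorem; it is quoted from \cite{v2-9}. The closest material is Problem \ref{p6.4}, which carries out your residue-class computation numerically for $t=n$ and $t=4n$. Three parts of your proposal are sound. The first is the computation of $\theta_{N,p,jn}$ on the cosets $\pm d_i+np^2\mathbb{Z}_N$. The second is the distinctness of the $R_{i+j}$. The third is the non-Type-1 argument: $zp\equiv\pm p$ forces $z\equiv\pm1\pmod{np^2}$, and such $z$ fix $R_i$. That last argument is more careful than the check in Problem \ref{p6.4}. That check only tries $s\in\varphi_7$, so it never treats the units $z\equiv\pm1\pmod{np^2}$, which are exactly the ones fixing $\{\pm p\}$.

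What is missing is the inclusion $T2_{np^3,p}(C_{np^3}(R_i))\subseteq\{C_{np^3}(R_{i+j}):0\le j\le p-1\}$. Equivalently, you must show that $\theta_{N,p,t}(C_N(R_i))$ is not circulant when $n\nmid t$. The tools you name do not give this. Property (f) only says that multiples of the least admissible $t_3$ yield members of the Type-2 set, not that nothing else does, and you would still have to exclude every $t$ with $0<t<n$. The symmetric-equidistance criterion of \cite{v2-1} is just an equivalent restatement of circulancy, so it does not decide anything by itself.

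The gap closes with your own computation, done for arbitrary $t$ and read vertex by vertex. Let $H=np^2\mathbb{Z}_N$ and take $w=\theta_{N,p,t}(v_z)$ with $z\equiv j_0\pmod p$. The neighbours of $w$ are $w+D(j_0)$, where
$D(j_0)=\{\pm p\}\cup(d_i+xpt-\epsilon_1p^2t+H)\cup(-d_i-xpt+\epsilon_2p^2t+H)$.
Here $\epsilon_1=1$ if $j_0\ge p-x$ and $0$ otherwise, and $\epsilon_2=1$ if $j_0<x$ and $0$ otherwise.

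Since $p$ is odd and $p\nmid x$, the elements of $D(j_0)$ congruent to $x$ modulo $p$ form exactly the coset $d_i+xpt-\epsilon_1p^2t+H$. Because $1\le x\le p-1$, we get $\epsilon_1=0$ for $j_0=0$ and $\epsilon_1=1$ for $j_0=p-1$. These two cosets coincide if and only if $p^2t\in H$, that is, if and only if $n\mid t$.

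So for $n\nmid t$, two vertices of the image have different difference sets, and the image is not circulant. For $t=jn$, with any $j$ and the index taken modulo $p$, every vertex has difference set exactly $R_{i+j}$. This also makes precise the graph equality $\theta_{N,p,jn}(C_N(R_i))=C_N(R_{i+j})$, which your ``union over all $k$'' step only establishes at the level of difference sets. With this in place, the set equality and the cyclic group of order $p$ follow as you describe.
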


\vspace{.1cm}
\noindent
{\bf Notation.} \label{5.10} We introduce the following notation related to Type-2 isomorphism of $C_n(R)$ w.r.t. $m$.

$C_n(R) \cong_{T2_{n,m,t}} C_{n}(S)$ when $C_n(R)$ and $C_n(S)$ are Type-2 isomorphic w.r.t. $m$ and $T2_{n,m,t}(C_n(R))$ = $\theta_{n,m,t}(C_{n}(R))$ = $C_n(S)$ for some $t$ where $m > 1$ divides $\gcd(n,r)$, $m^3$ divides $n$, $r\in R,S$ and $1 \leq t \leq \frac{n}{m}-1$.

\section{Examples of Type-1 and Type-2 groups on isomorphic circulant graphs}

 To enlighten the topic, we present here examples of Type-1 and Type-2 sets of circulant graphs and Type-1 and Type-2 groups on isomorphic circulant graphs of Type-1 and Type-2, respectively. In \cite{v2-1} to \cite{v2-5}, we obtained many  isomorphic circulant graphs of Type-1 and Type-2 and we present  Type-1 and Type-2 sets and groups in problem \ref{p3} corresponding to isomorphic circulant graphs that occurs in problems in \cite{v2-1}. Similarly, one can present more Type-1 and Type-2 sets and groups corresponding to isomorphic circulant graphs that occurs in problems in \cite{v2-2} to \cite{v2-5}. 

\begin{prm}  \label{p1} {\rm For the circulant graphs $C_{54}(R_1)$, $C_{54}(R_2)$ and $C_{54}(R_3)$ with $R_1$ = $\{2,3,16,20\}$, $R_2$ = $\{3,4,14,22\}$ and $R_3$ = $\{3,8,10,26\}$, show that Type-1 and Type-2 sets of circulant graphs as given by
\begin{enumerate}
	\item [\rm (i)] $T2_{54,3}(C_{54}(R_1))$ = $\{C_{54}(R_1)$, $C_{54}(R_2), C_{54}(R_3)\}$ = $T2_{54,3}(C_{54}(R_2))$ = $T2_{54,3}(C_{54}(R_3))$;  
	
	\item [\rm (ii)]  $T1_{54}(C_{54}(R_1))$ = $\{C_{54}(R_1), C_{54}(A_1), C_{54}(B_1)\}$ = $\{C_{54}(xR_1): x = 1, 5, 7\}$,
	\\
	$T1_{54}(C_{54}(R_2))$ = $\{C_{54}(R_2), C_{54}(A_2), C_{54}(B_2)\}$ = $\{C_{54}(xR_2): x = 1, 5, 7\}$ and 
	\\
	$T1_{54}(C_{54}(R_3))$ = $\{C_{54}(R_3), C_{54}(A_3), C_{54}(B_3)\}$ = $\{C_{54}(xR_3): x = 1, 5, 7\}$  where 
	
	$C_{54}(A_1)$ = $C_{54}(8,10,15,26)$, $C_{54}(A_2)$ = $C_{54}(2,15,16,20)$, $C_{54}(A_3)$ = $C_{54}(4,14,15,22)$, 

	$C_{54}(B_1)$ = $C_{54}(4,14,21,22)$, $C_{54}(B_2)$ = $C_{54}(8,10,21,26)$, $C_{54}(B_3)$ = $C_{54}(2,16,20,21)$; and 
	
	\item [\rm (iii)] For $i$ = 1,2,3, $(T1_{54}(C_{54}(R_i)), \circ')$ and $(T2_{54,3}(C_{54}(R_i)), \circ)$ are Abelian groups.	 
\end{enumerate}	  }   
\end{prm}

\noindent
{\bf Solution.} \quad $(i)$ Here, $n$ = 54, $r = 3$ and $m$ = $\gcd(n, r)$ = $\gcd(54, 3)$ = 3 = $r$. Let

$S_1$ = $R_1 \cup (54-R_1)$ = $\{2,3,16,20, 34,38,51,52\}$, 

$S_2$ = $R_2 \cup (54-R_2)$ = $\{3,4,14,22,$ $32,40,50,51\}$ and 

$S_3$ = $R_3 \cup (54-R_3)$ = $\{3,8,10,26, 28,44,46,51\}$. For $i$ = 1 to 3, we have $C_{54}(S_i)$ = $C_{54}(R_i)$ and 

\noindent
 $V_{54,3}(C_{54}(R_1))$ = $V_{54,3}(C_{54}(2,3,16,20))$ 

\hfill = $\{\theta_{54,3,t}(C_{54}(2,3,16,20)): t = 0,1,\dots,\frac{54}{\gcd(54,3)}-1 = 17\}$ = $V_{54,3}(C_{54}(R_2))$ = $V_{54,3}(C_{54}(R_2))$.

 For various values of $t$, it is easy to see from Table 1 that $\theta_{54,3,i+6t}(C_{54}(S_1))$ = $\theta_{54,3,i}(C_{54}(S_1))$ and $\theta_{54,3,2(j-1)}(C_{54}(S_1))$ = $C_{54}(S_j)\in V_{54,3}(C_{54}(S_1))$, $0 \leq i \leq 5$, $0 \leq i+6t \leq$ $\frac{54}{\gcd(54,3)}-1 = 17$ and $1 \leq j \leq 3$. This implies, $V_{54,3}(C_{54}(R_1))$ = $\{\theta_{54,3,t}(C_{54}(2,3,16,$ $20)): t = 0,1,\dots,5 \}$ = $V_{54,3}(C_{54}(R_2))$ = $V_{54,3}(C_{54}(R_3))$, and $C_{54}(R_1)$, $\theta_{54,3,2}(C_{54}(R_1))$ = $C_{54}(R_2)$ and $\theta_{54,3,4}(C_{54}(R_1))$ = $C_{54}(R_3)$ are isomorphic circulant graphs. 
\\
 $T1_{54}(C_{54}(R_1))$ =  $T1_{54}(C_{54}(2,3,16,20))$ = $\{C_{54}(2,3,16,20), C_{54}(8,10,15,26), C_{54}(4,14,21,22)\}$. 

$\Rightarrow$ $C_{54}(R_2),C_{54}(R_3)\notin T1_{54}(C_{54}(R_1))$.

$\Rightarrow$ $C_{54}(R_i)$ are Type-2 isomorphic w.r.t. $m$ = 3 for $i$ = 1 to 3. Clearly, 
\\
$T2_{54,3}(C_{54}(R_i))$ = $\{C_{54}(R_1),$ $C_{54}(R_2)$, $C_{54}(R_3)\}$ $\subset$ $V_{54,3}(C_{54}(R_i))$ = $V_{54,3}(C_{54}(R_j))$ and 
\\
$(T2_{54,3}(R_i), \circ)$ is a subgroup of Abelian group $(V_{54,3}(C_{54}(R_j)), \circ)$, $1 \leq i,j \leq 3$. 

Type-2 isomorphic circulant graphs $C_{54}(2,3,16,20)$, $C_{54}(3,4,14,22)$, $C_{54}(3,8,10,26)$  w.r.t. $m$ = 3 are given in Figures 1, 2, 3.

\begin{figure}[ht]
	\centerline{\includegraphics[width=6.2in]{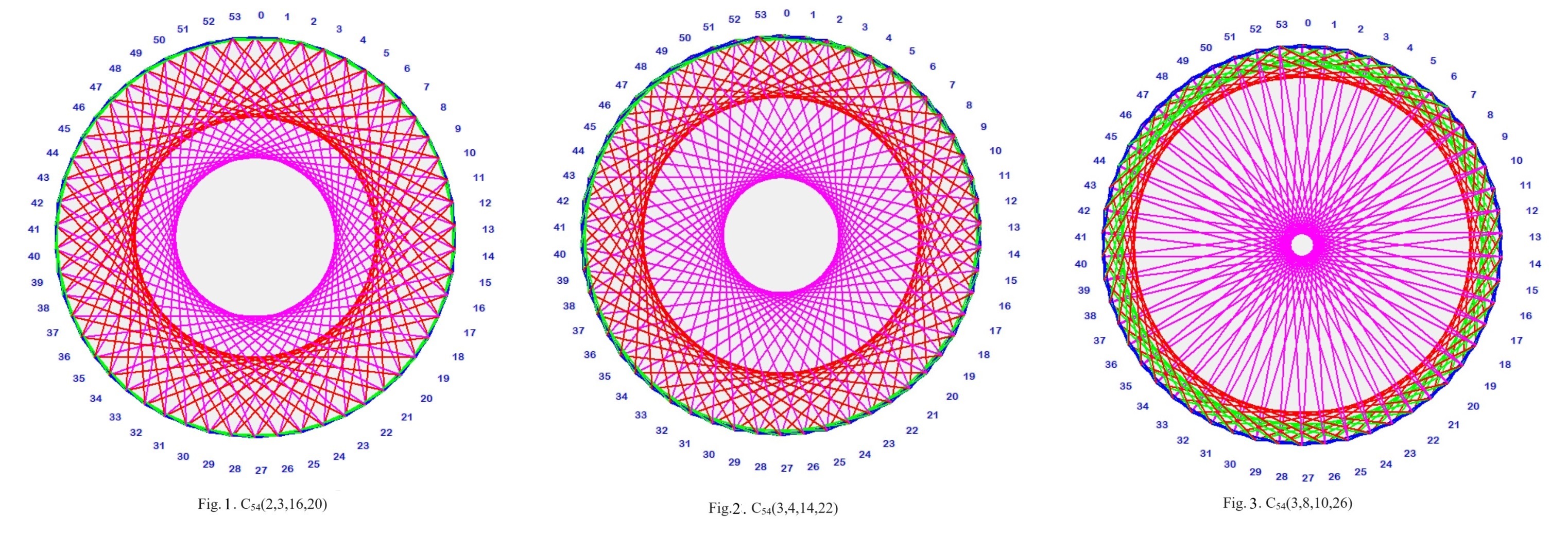}}
\end{figure}
\begin{table} \label{t1} 
\caption{\small{Calculation of $\theta_{54,3,t}(R_1 \cup (54-R_1))$, $R_1$ = $\{2,3,16,20\}$, $t$ = 0 to 6.}}
\begin{center}
\scalebox{0.7}{
\begin{tabular}{||c||c||c|c|c|c|c|c|c|c||c||} \hline \hline
~ \hspace{.05cm} $t$ \hspace{.1cm} & \backslashbox{$\theta_{54,3,t}(x)$}{Jump  size \\ $x$}
& \hspace{.1cm} 2 \hspace{.1cm} & \hspace{.1cm} 3 \hspace{.1cm} & \hspace{.1cm} 16 \hspace{.1cm} & \hspace{.1cm} 20 \hspace{.1cm} & \hspace{.1cm} 34 \hspace{.1cm} & \hspace{.1cm} 38 \hspace{.1cm} & \hspace{.1cm} 51 \hspace{.1cm} & \hspace{.1cm} 52 \hspace{.1cm} & Symmetric or NS \\
\cline{1-9}
  \hline \hline
& & &  &   &  &  & & & &  \\
t & $\theta_{54,3,t}(x)$ & 2+6t & 3 & 16+3t & 20+6t & 34+3t & 38+6t & 51 & 52+3t & {$T1$ or  $T2$ or  NS} \\ \hline \hline
& & &  &   &  &  & & & &  \\
0 & $\theta_{54,3,0}(x)$ & 2 & 3 & 16 & 20 & 34 & 38 & 51 & 52 & Yes (Identity)  \\\hline
 & & &  &   &  &  & & & & \\
1 & $\theta_{54,3,1}(x)$ & 8 & 3 & 19 & 26 & 37 & 44 & 51 & 1 & NS \\\hline 
& & &  &   &  &  & & & & \\
2 & $\theta_{54,3,2}(x)$ & 14 & 3 & 22 & 32 & 40 & 50 & 51 & 4  & Yes (Type-2) \\\hline
& & &  &   &  &  & & &  & \\
3 & $\theta_{54,3,3}(x)$ & 20 & 3 & 25 & 38 & 43 & 2 & 51 & 7 & NS \\\hline
& & &  &   &  &  & & & & \\
4 & $\theta_{54,3,4}(x)$ & 26 & 3 & 28 & 44 & 46 & 8 & 51 & 10 & Yes (Type-2) \\\hline
& & &  &   &  &  & & & & \\
5 & $\theta_{54,3,5}(x)$ & 32 & 3 & 31 & 50 & 49 & 14 & 51 & 13 & NS \\\hline
& & &  &   &  &  & & & & \\
6 & $\theta_{54,3,6}(x)$ & 38 & 3 & 34 & 2 & 52 & 20 & 51 & 16 & Yes (Identity) \\\hline\hline
\end{tabular}}
\end{center}
\footnotesize{T1: Type-1; T2: Type-2 isomorphic w.r.t. $r$ = 3; NS: Non-symmetric.}
\end{table} 

\vspace{.2cm}
\noindent
$(ii)$ We have

\hspace{.25cm} $T1_{54}(C_{54}(R_1))$ = $T1_{54}(C_{54}(2,3,16,20))$ = $\{C_{54}(x(2,3,16,20)): x\in\varphi_{54}\}$

\hspace{2.6cm} = $\{C_{54}(x(2,3,16,20)): x = 1, 5, 7, 11, 13, 17, 19, 23\}$ 

\hspace{2.6cm} = $\{C_{54}(2,3,16,20), C_{54}(8,10,15,26), C_{54}(4,14,21,22)\}$

\hspace{2.6cm} = $\{C_{54}(R_1), C_{54}(A_1), C_{54}(B_1)\}$ = $\{C_{54}(xR_1): x = 1, 5, 7\}$ 

\hspace{2.6cm} = $T1_{54}(C_{54}(A_1))$ = $T1_{54}(C_{54}(B_1))$ 
\\
where $A_1$ = $\{8,10,15,26\}$ and $B_1$ = $\{4,14,21,22\}$; 

\hspace{.25cm} $T1_{54}(C_{54}(R_2))$ = $T1_{54}(C_{54}(3,4,14,22))$ = $\{C_{54}(x(3,4,14,22)): x\in\varphi_{54}\}$

\hspace{2.6cm} = $\{C_{54}(x(3,4,14,22)): x = 1, 5, 7, 11, 13, 17, 19, 23\}$ 

\hspace{2.6cm} = $\{C_{54}(3,4,14,22), C_{54}(2,15,16,20), C_{54}(8,10,21,26)\}$

\hspace{2.6cm} = $\{C_{54}(R_2), C_{54}(A_2), C_{54}(B_2)\}$ = $\{C_{54}(xR_2): x = 1, 5, 7\}$ 

\hspace{2.6cm} = $T1_{54}(C_{54}(A_2))$ = $T1_{54}(C_{54}(B_2))$ 
\\
where $A_2$ = $\{2,15,16,20\}$ and $B_2$ = $\{8,10,21,26\}$; 

\hspace{.25cm} $T1_{54}(C_{54}(R_3))$ = $T1_{54}(C_{54}(3,8,10,26))$ = $\{C_{54}(x(3,8,10,26)): x\in\varphi_{54}\}$

\hspace{2.6cm} = $\{C_{54}(x(3,8,10,26)): x = 1, 5, 7, 11, 13, 17, 19, 23\}$ 

\hspace{2.6cm} = $\{C_{54}(3,8,10,26), C_{54}(4,14,15,22), C_{54}(2,16,20,21)\}$

\hspace{2.6cm} = $\{C_{54}(R_3), C_{54}(A_3), C_{54}(B_3)\}$ = $\{C_{54}(xR_3): x = 1, 5, 7\}$ 

\hspace{2.6cm} = $T1_{54}(C_{54}(A_3))$ = $T1_{54}(C_{54}(B_3))$ 
\\
where $A_3$ = $\{4,14,15,22\}$ and $B_3$ = $\{2,16,20,21\}$. 

\vspace{.1cm}
\noindent
$(iii)$ Result follows from definition \ref{a6} and Theorem \ref{t5.6}. \hfill $\Box$

\begin{prm} \quad \label{p2} {\rm For $R_1$ = $\{3,7,20,34\}$, 
\\
(a) find the following sets: (i) $T1_{81}(C_{81}(R_1))$,  (ii) $V_{81,3}(C_{81}(R_1))$, (iii) $T2_{81,3}(C_{81}(R_1))$; and
\\
(b) prove that (i) $(T1_{81}(C_{81}(R_1)), \circ')$, (ii) $(T2_{81,3}(C_{81}(R_1)), \circ)$, and (iii) $(V_{81,3}(C_{81}(R_1)), \circ)$  are Abelian 

 groups.   } 
\end{prm}

\noindent
{\bf Solution.}\quad Let $R_1$ = $\{3,7,20,34\}$ and $S_1$ = $R_1 \cup (81 - R_1)$ = $\{3,7,20,34, 47,61,74,78\}$. 

Here, $n$ = 81 = $3\times 3^3$ and so possible value of $m$ is $m$ = 3 = $\gcd(54, 3)$ = $\gcd(n, r)$ = $r \in R_1$.  

\vspace{.1cm}
\noindent
{\bf Case $a (i)$ $\&$ Case $b (ii)$:} Consider,

$T1_{81}(C_{81}(R_1))$ = $T1_{81}(C_{81}(3,7,20,34))$ = $\{C_{81}(x(3,7,20,34)): x\in\varphi_{81}\}$ 

\hspace{2.25cm} = $\{C_{81}(x(3,7,20,34)): x = 1,2, 4,5, 7,8, 10,11, 13,14, 16,17, 19,20,$ 

\hfill $22,23, 25,26, 28,29, 31,32, 34,35, 37,38, 39,40\}$

\hfill = $\{C_{81}(3,7,20,34), C_{81}(6, 13, 14, 40), C_{81}(1, 12, 26, 28), C_{81}(8, 15, 19, 35), C_{81}(5, 21, 22, 32),$

\hfill $C_{81}(2, 24, 25, 29), C_{81}(11, 16, 30, 38), C_{81}(4, 23, 31, 33), C_{81}(10, 17, 37, 39)\}$ 

\hspace{2.25cm}  = $\{C_{81}(xR_1): x = 1,2, 4,5, 7,8, 10,11, 13\}$.

Clearly, $(T1_{81}(C_{81}(R_1)), \circ)$ =  $(\{C_{81}(x(3, 7, 20, 34)): x = 1,2, 4,5, 7,8, 10,11, 13\}, \circ')$ is the Type-1 group of $C_{81}(R_1)$ which is an Abelian group.

\vspace{.1cm}
\noindent
{\bf Case $a (ii)$ $\&$ Case $b (iii)$:}  We have $V_{81,3}(C_{81}(R_1))$ = $\{\theta_{81,3,t}(C_{81}(R_1)):$ $t$ = 0 to $\frac{n}{\gcd(n,r)}$-1 = 26$\}$ and from Table 2,   

$\theta_{81,3,0}(S_1)$ = $S_1$, $\theta_{81,3,3}(S_1)$ = $\{3,11,16,38, 43,65,70,78\}$ = $S_2$,   

$\theta_{81,3,6}(S_1)$ = $\{2,3,25,29, 52,56,78,79\}$ = $S_3$, say,  

$\theta_{81,3,i+9t}(S_1)$ = $\theta_{81,3,i}(S_1)$, $\theta_{81,3,3(j-1)}(S_1)$ = $S_j$ and 

$C_{81}(S_j)\in V_{81,3}(C_{81}(S_1))$, $1 \leq j \leq 3$ and $0 \leq i \leq 8$. See Table 2.

Clearly, $(V_{81,3}(C_{81}(R_1)), \circ)$ =  $\{\theta_{81,3,t}(C_{81}(3, 7, 20, 34)):$ $t$ = 0 to 26$\}$ is an Abelian group.

\vspace{.1cm}
\noindent
{\bf Case $a (iii)$ $\&$ Case $b (ii)$:}    Let $R_2$ = $\{3,11,16,38\}$, $R_3$ = $\{2,3,25,29\}$, $S_2$ = $R_2 \cup (81 - R_2)$ and $S_3$ = $R_3 \cup (81 - R_3)$. From Table 2, we get, $\theta_{81,3,i+9t}(S_1)$ = $\theta_{81,3,i}(S_1)$, $\theta_{81,3,3(j-1)}(S_1)$ = $S_j$, $V_{81,3}(S_k)$ = $\{\theta_{81,3,t}(S_k):$ $t$ = 0 to 8$\}$ and $S_j\in V_{81,3}(S_k)$, $1 \leq j,k \leq 3$ and $0 \leq i \leq 8$. 

This implies, $\theta_{81,3,i+9t}(C_{81}(R_1))$ = $\theta_{81,3,i}(C_{81}(R_1))$, $\theta_{81,3,3(j-1)}(C_{81}(R_1))$ = $C_{81}(R_j)$,  $V_{81,3}(C_{81}(R_k))$ = $\{\theta_{81,3,t}(C_{81}(R_k)):$ $t$ = 0 to 8$\}$ and $C_{81}(R_j)\in V_{81,3}(C_{81}(R_k))$, $1 \leq j,k \leq 3$ and $0 \leq i \leq 8$. 

$\Rightarrow$ $C_{81}(R_i)$ are isomorphic circulant graphs for $i$ = 1 to 3. Moreover, 
\\
$T1_{81}(C_{81}(R_1))$ = $T1_{81}(C_{81}(3,7,20,34))$ = $\{C_{81}(x(3,7,20,34)): x\in\varphi_{81}\}$

\hspace{1.85cm} = $\{C_{81}(x(3,7,20,34)): x = 1, 5, 7, 11, 13, 17, 19, 23\}$ 

\hspace{1.85cm} = $\{C_{81}(3,7,20,34), C_{81}(6,13,14,40), C_{81}(1,12,26,28), C_{81}(8,15,19,35)$

\hfill $C_{81}(5,21,22,32), C_{81}(2,24,25,29), C_{81}(11,16,30,38), C_{81}(4,23,31,33)$, 

\hfill $C_{81}(10,17,37,39)\}$ = $T2_{81,3}(R_2)$ = $T2_{81,3}(R_3)$.  

$\Rightarrow$ $C_{81}(3,11,16,38),C_{81}(2,3,25,29)\notin T1_{81}(C_{81}(3,7,20,34))$.

$\Rightarrow$ $C_{81}(3,7,20,34)$, $C_{81}(3,11,16,38)$, $C_{81}(2,3,25,29)$ are Type-2 isomorphic w.r.t. $m$ = 3.

Clearly, $T2_{81,3}(R_i)$ = $T2_{81,3}(R_j)$ $\subset$ $V_{81,3}(C_{81}(R_j))$ = $V_{81,3}(C_{81}(R_i))$ and $(T2_{81,3}(C_{81}(R_i)), \circ)$ is a subgroup of Abelian group $(V_{81,3}(C_{81}(R_i)), \circ)$, $1 \leq i,j \leq 3$. 

\begin{table} \label{t2} 
\caption{\small{Calculation of $\theta_{81,3,t}(R \cup (81-R))$, $R$ = $\{3,7,20,34\}$, $r$ = 0 to 8.}}
\begin{center}
\scalebox{0.7}{
\begin{tabular}{||c||c||c|c|c|c|c|c|c|c|c||} \hline \hline
~ \hspace{.05cm} $t$ \hspace{.1cm} & \backslashbox{$\theta_{81,3,t}(x)$}{Jump size \\ $x$}
& \hspace{.1cm} 3 \hspace{.1cm} & \hspace{.1cm} 7 \hspace{.1cm} & \hspace{.1cm} 20 \hspace{.1cm} & \hspace{.1cm} 34 \hspace{.1cm} & \hspace{.1cm} 47 \hspace{.1cm} & \hspace{.1cm} 61 \hspace{.1cm} & \hspace{.1cm} 74 \hspace{.1cm} & \hspace{.1cm} 78 \hspace{.1cm} & Symmetric or NS \\\hline \hline
& & &  &   &  &  & & & &  \\
t & $\theta_{81,3,t}(x)$ & 3 & 7+3t & 20+6t & 34+3t & 47+6t & 61+3t & 74+6t & 78 & {$T1$ or  $T2$ or  NS} \\ \hline \hline
& & &  &   &  &  & & & &  \\
0 & $\theta_{81,3,0}(x)$ & 3 & 7 & 20 & 34 & 47 & 61 & 74 & 78 & Identity  \\\hline
 & & &  &   &  &  & & & & \\
1 & $\theta_{81,3,1}(x)$ & 3 & 10 & 26 & 37 & 53 & 64 & 80 & 78 & NS \\\hline 
& & &  &   &  &  & & & & \\
2 & $\theta_{81,3,2}(x)$ & 3 & 13 & 32 & 40 & 59 & 67 & 5 & 78 & NS \\\hline
& & &  &   &  &  & & &  & \\
3 & $\theta_{81,3,3}(x)$ & 3 & 16 & 38 & 43 & 65 & 70 & 11 & 78 & Yes (Type-2) \\\hline
& & &  &   &  &  & & & & \\
4 & $\theta_{81,3,4}(x)$ & 3 & 19 & 44 & 46 & 71 & 73 & 17 & 78 & NS \\\hline
& & &  &   &  &  & & & & \\
5 & $\theta_{81,3,5}(x)$ & 3 & 22 & 50 & 49 & 77 & 76 & 23 & 78 & NS \\\hline
& & &  &   &  &  & & & & \\
6 & $\theta_{81,3,6}(x)$ & 3 & 25 & 56 & 52 & 2 & 79 & 29 & 78 & Yes (Type-2) \\\hline
& & &  &   &  &  & & & & \\
7 & $\theta_{81,3,7}(x)$ & 3 & 28 & 62 & 55 & 8 & 1 & 35 & 78 & NS \\\hline
& & &  &   &  &  & & & & \\
8 & $\theta_{81,3,8}(x)$ & 3 & 31  & 68 & 58 & 14 & 4 & 41 & 78 & NS  \\\hline\hline
\end{tabular}}
\end{center}
\footnotesize{T1: Type-1; T2: Type-2 isomorphic w.r.t. $r$ = 3; NS: Non-symmetric.}
\end{table} 
\begin{figure}[ht]
	\centerline{\includegraphics[width=6.2in]{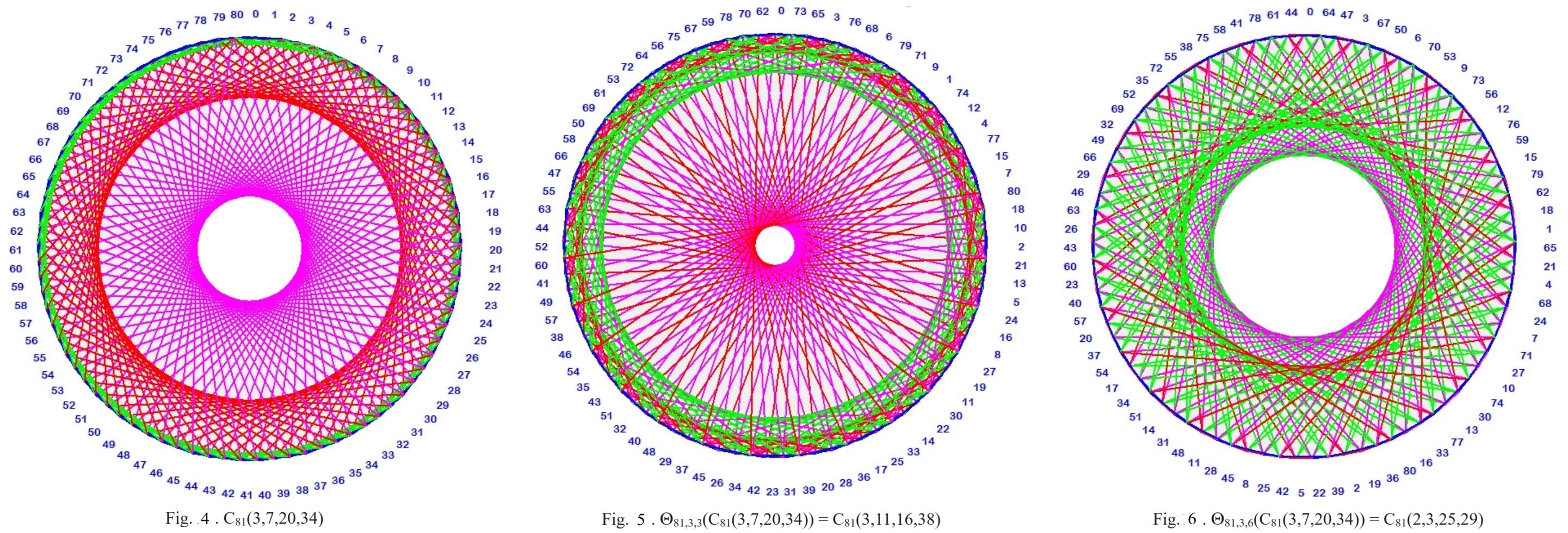}}
\end{figure}

Circulant graphs $C_{81}(3,7,20,34)$, $\theta_{81,3,3}(C_{81}(3,7,20,34))$ = $C_{81}(3,11,16,38)$, $\theta_{81,3,6}(C_{81}(3,7,20,34))$ = $C_{81}(2,3,25,29)$ which are Type-2 isomorphic w.r.t. $m$ = 3 are given in Figures 4, 5, 6.   \hfill $\Box$

\begin{prm} \quad \label{p3} {\rm For the following circulant graphs, find (i) Type-1 set,  (ii) Type-2 set, (iii) Type-1 group, and (iv) Type-2 group.

(a) $C_{16}(1,2,7)$,  (b) $C_{16}(1,2,4,6,7)$,  (c) $C_{16}(1,2,4,7,8)$,

(d) $C_{24}(1,2,8,11)$,  (e) $C_{24}(1,2,10,11)$,   (f) $C_{27}(1,3,8,10)$,

(g) $C_{48}(1,2,23)$,  (h) $C_{48}(1,4,23)$,  (i) $C_{48}(1,6,23)$,

(j) $C_{54}(1,3,17,19)$,  (k) $C_{54}(1,6,17,19)$,  (l) $C_{54}(1,17,18,19)$, 

(m) $C_{108}(3,5,31,41)$,  (n) $C_{108}(5,12,31,41)$, and  (o) $C_{108}(5,18,31,41)$. 
}   
\end{prm}

\noindent
{\bf Solution.} \quad Type-1/Type-2 isomorphic circulant graphs of these graphs are obtained in \cite{v2-1} and using these isomorphic graphs, here we find their corresponding (i) Type-1 set,  (ii) Type-2 set, (iii) Type-1 group, and (iv) Type-2 group as follows. 
\begin{enumerate}
\item [\rm (a)] In problem 3.13 in \cite{v2-1}, we have
\\
$T1_{16}(C_{16}(1,2,7))$ = $Ad_{16}(C_{16}(1,2,7))$ = $\{C_{16}(1,2,7), C_{16}(3,5,6)\}$ = $T1_{16}(C_{16}(3,5,6))$ and
\\
$T2_{16,2}(C_{16}(1,2,7))$ = $\{C_{16}(1,2,7), C_{16}(2,3,5)\}$ = $T2_{16,2}(C_{16}(2,3,5))$. 
\\
$\Rightarrow$ Type-1 set of $C_{16}(1,2,7)$ is 

$T1_{16}(C_{16}(1,2,7))$ = $\{C_{16}(1,2,7), C_{16}(3,5,6)\}$ = $T1_{16}(C_{16}(3,5,6))$; 
\\
Type-1 group of $C_{16}(1,2,7)$ is 

$(T1_{16}(C_{16}(1,2,7)), \circ')$ = $(T1_{16}(C_{16}(3,5,6)), \circ')$ 
\\ 
where $T1_{16}(C_{16}(1,2,7))$ = $\{C_{16}(1,2,7), C_{16}(3,5,6)\}$ = $T1_{16}(C_{16}(3,5,6))$; and
 \\
Type-2 group of $C_{16}(1,2,7)$ w.r.t. $m$ = 2 is 

$(T2_{16,2}(C_{16}(1,2,7)), \circ)$ = $(T2_{16,2}(C_{16}(2,3,5)), \circ)$ 
\\ 
where $T2_{16,2}(C_{16}(1,2,7))$ = $\{C_{16}(1,2,7), C_{16}(2,3,5)\}$ = $T2_{16,2}(C_{16}(2,3,5))$. 

\item [\rm (b)] In problem 3.14 in \cite{v2-1}, we have
\\
$T1_{16}(C_{16}(1,2,4,6,7))$ = $\{C_{16}(1,2,4,6,7), C_{16}(2,3,4,5,6)\}$ = $T1_{16}(C_{16}(2,3,4,5,6))$;
\\
$T2_{16,2}(C_{16}(1,2,4,6,7))$ = $\{C_{16}(1,2,4,6,7)\}$ and $T2_{16,2}(C_{16}(2,3,4,5,6))$ = $\{C_{16}(2,3,4,5,6)\}$. 
\\
$\Rightarrow$ Type-1 set of $C_{16}(1,2,4,6,7)$ is 

$T1_{16}(C_{16}(1,2,4,6,7))$ = $\{C_{16}(1,2,4,6,7), C_{16}(2,3,4,5,6)\}$ = $T1_{16}(C_{16}(2,3,4,5,6))$; 
\\
Type-1 group of $C_{16}(1,2,4,6,7)$ is

 $(T1_{16}(C_{16}(1,2,4,6,7)), \circ')$ = $(T1_{16}(C_{16}(2,3,4,5,6)), \circ')$ 
\\
 where $T1_{16}(C_{16}(1,2,4,6,7))$ = $\{C_{16}(1,2,4,6,7), C_{16}(2,3,4,5,6)\}$ = $T1_{16}(C_{16}(2,3,4,5,6))$; 
 \\
Type-2 group of $C_{16}(1,2,4,6,7)$ w.r.t. $m$ = 2 is $(T2_{16,2}(C_{16}(1,2,4,6,7)), \circ)$  and 
\\
Type-2 group of $C_{16}(2,3,4,5,6)$ w.r.t. $m$ = 2 is $(T2_{16,2}(C_{16}(2,3,4,5,6)), \circ)$ where  
\\
$T2_{16,2}(C_{16}(1,2,4,6,7))$ = $\{C_{16}(1,2,4,6,7)\}$ and $T2_{16,2}(C_{16}(2,3,4,5,6))$ = $\{ C_{16}(2,3,4,5,6)\}$. 

\item [\rm (c)] In problem 3.14 in \cite{v2-1}, we have

$T1_{16}(C_{16}(1,2,4,7,8))$ = $\{C_{16}(1,2,4,7,8), C_{16}(3,4,5,6,8)\}$ = $T1_{16}(C_{16}(3,4,5,6,8))$ and

$T2_{16,2}(C_{16}(1,2,4,7,8))$ = $\{C_{16}(1,2,4,7,8), C_{16}(2,3,4,5,8)\}$ = $T2_{16,2}(C_{16}(2,3,4,5,8))$. 
\\
$\Rightarrow$ Type-1 set of $C_{16}(1,2,4,7,8)$ is 

$T1_{16}(C_{16}(1,2,4,7,8))$ = $\{C_{16}(1,2,4,7,8), C_{16}(3,4,5,6,8)\}$ = $T1_{16}(C_{16}(3,4,5,6,8))$; 
\\
Type-1 group of $C_{16}(1,2,4,7,8)$ is 

$(T1_{16}(C_{16}(1,2,4,7,8)), \circ')$ = $(T1_{16}(C_{16}(3,4,5,6,8)), \circ')$ 
\\
 where $T1_{16}(C_{16}(1,2,4,7,8))$ = $\{C_{16}(1,2,4,7,8), C_{16}(3,4,5,6,8)\}$ = $T1_{16}(C_{16}(3,4,5,6,8))$; 
 \\
Type-2 group of $C_{16}(1,2,4,7,8)$ w.r.t. $m$ = 2 is 

$(T2_{16,2}(C_{16}(1,2,4,7,8)), \circ)$ = $(T2_{16,2}(C_{16}(2,3,4,5,8)), \circ)$ where

\hfill  $T2_{16,2}(C_{16}(1,2,4,7,8))$ = $\{C_{16}(1,2,4,7,8), C_{16}(2,3,4,5,8)\}$ = $T2_{16,2}(C_{16}(2,3,4,5,8))$. 

\item [\rm (d)] In problem 3.15 in \cite{v2-1}, we have

$T1_{24}(C_{24}(1,2,8,11))$ = $\{C_{24}(1,2,8,11), C_{24}(5,7,8,10)\}$ = $T1_{24}(C_{24}(5,7,8,10))$ and

$T2_{24,2}(C_{24}(1,2,8,11))$ = $\{C_{24}(1,2,8,11), C_{24}(2,5,7,8)\}$ = $T2_{24,2}(C_{24}(2,5,7,8))$. 
\\
$\Rightarrow$ Type-1 set of $C_{24}(1,2,8,11)$ is 

$T1_{24}(C_{24}(1,2,8,11))$ = $\{C_{24}(1,2,8,11), C_{24}(5,7,8,10)\}$ = $T1_{24}(C_{24}(5,7,8,10))$; 
\\
Type-1 group of $C_{24}(1,2,8,11)$ is 

$(T1_{24}(C_{24}(1,2,8,11)), \circ')$ = $(T1_{24}(C_{24}(5,7,8,10)), \circ')$ 
\\ 
where $T1_{24}(C_{24}(1,2,8,11))$ = $\{C_{24}(1,2,8,11), C_{24}(5,7,8,10)\}$ = $T1_{24}(C_{24}(5,7,8,10))$; and
 \\
Type-2 group of $C_{24}(1,2,8,11)$ w.r.t. $m$ = 2 is 

$(T2_{24,2}(C_{24}(1,2,8,11)), \circ)$ = $(T2_{24,2}(C_{24}(2,5,7,8)), \circ)$ 
\\
 where $T2_{24,2}(C_{24}(1,2,8,11))$ = $\{C_{24}(1,2,8,11), C_{24}(2,5,7,8)\}$ = $T2_{24,2}(C_{24}(2,5,7,8))$. 

\item [\rm (e)] In problem 3.15 in \cite{v2-1}, we have

$T1_{24}(C_{24}(1,2,10,11))$ = $\{C_{24}(1,2,10,11), C_{24}(2,5,7,10)\}$ = $T1_{24}(C_{24}(2,5,7,10))$ and

$T2_{24,2}(C_{24}(1,2,10,11))$ = $\{C_{24}(1,2,10,11)\}$ and $T2_{24,2}(C_{24}(2,5,7,10))$ = $\{C_{24}(2,5,7,10)\}$. 
\\
$\Rightarrow$ Type-1 set of $C_{24}(1,2,10,11)$ is 

$T1_{24}(C_{24}(1,2,10,11))$ = $\{C_{24}(1,2,10,11), C_{24}(2,5,7,10)\}$ = $T1_{24}(C_{24}(2,5,7,10))$; 
\\
Type-1 group of $C_{24}(1,2,10,11)$ is 

$(T1_{24}(C_{24}(1,2,10,11)), \circ')$ = $(T1_{24}(C_{24}(2,5,7,10)), \circ')$ 
\\ 
where $T1_{24}(C_{24}(1,2,10,11))$ = $\{C_{24}(1,2,10,11), C_{24}(2,5,7,10)\}$ = $T1_{24}(C_{24}(2,5,7,10))$; and
 
Type-2 group of $C_{24}(1,2,10,11)$ w.r.t. $m$ = 2 is $(T2_{24,2}(C_{24}(1,2,10,11)), \circ)$ and 

Type-2 group of $C_{24}(2,5,7,10)$ w.r.t. $m$ = 2 is $(T2_{24,2}(C_{24}(2,5,7,10)), \circ)$
\\
where $T2_{24,2}(C_{24}(1,2,10,11))$ = $\{C_{24}(1,2,10,11)\}$ and $T2_{24,2}(C_{24}(2,5,7,10))$ = $\{C_{24}(2,5,7,10)\}$. 

\item [\rm (f)] In problem 3.16 in \cite{v2-1}, we have
\\
$T1_{27}(C_{27}(1,3,8,10))$ = $\{C_{27}(1,3,8,10), C_{27}(2,6,7,11), C_{27}(4,5,12,13)\}$ 

\hfill = $T1_{27}(C_{27}(2,6,7,11))$ =  $T1_{27}(C_{27}(4,5,12,13))$ and
\\
$T2_{27,3}(C_{27}(1,3,8,10))$ = $\{C_{27}(1,3,8,10), C_{27}(3,4,5,13), C_{27}(2,3,7,11)\}$ 

\hfill  = $T2_{27,3}(C_{27}(3,4,5,13))$ = $T2_{27,3}(C_{27}(2,3,7,11))$. 
\\
$\Rightarrow$ Type-1 set of $C_{27}(1,3,8,10)$ is 

$T1_{27}(C_{27}(1,3,8,10))$ = $\{C_{27}(1,3,8,10), C_{27}(2,6,7,11), C_{27}(4,5,12,13)\}$ 

\hfill = $T1_{27}(C_{27}(2,6,7,11))$ = $T1_{27}(C_{27}(4,5,12,13))$; 
\\
Type-1 group of $C_{27}(1,3,8,10)$ is 

$(T1_{27}(C_{27}(1,3,8,10)), \circ')$ = $(T1_{27}(C_{27}(2,6,7,11)), \circ')$ = $(T1_{27}(C_{27}(4,5,12,13)), \circ')$
\\
where $T1_{27}(C_{27}(1,3,8,10))$ = $\{C_{27}(1,3,8,10), C_{27}(2,6,7,11), C_{27}(4,5,12,13)\}$ 

\hfill = $T1_{27}(C_{27}(2,6,7,11))$ = $T1_{27}(C_{27}(4,5,12,13))$; and
 \\
Type-2 group of $C_{27}(1,3,8,10)$ w.r.t. $m$ = 3 is 

$(T2_{27,3}(C_{27}(1,3,8,10)), \circ)$ = $(T2_{27,3}(C_{27}(3,4,5,13)), \circ)$ = $(T2_{27,3}(C_{27}(2,3,7,11)), \circ)$
\\ 
where $T2_{27,3}(C_{27}(1,3,8,10))$ = $\{C_{27}(1,3,8,10), C_{27}(3,4,5,13), C_{27}(2,3,7,11)\}$ 

\hfill  = $T2_{27,3}(C_{27}(3,4,5,13))$ = $T2_{27,3}(C_{27}(2,3,7,11))$. 

\item [\rm (g)] In problem 3.17 in \cite{v2-1}, we have

$T1_{48}(C_{48}(1,2,23))$ = $\{C_{48}(1,2,23), C_{48}(5,10,19), C_{48}(7,14,17), C_{48}(11,13,22)\}$ 

\hfill = $T1_{48}(C_{48}(5,10,19))$ = $T1_{48}(C_{48}(7,14,17))$ = $T1_{48}(C_{48}(11,13,22))$  and

$T2_{48,2}(C_{48}(1,2,23))$ = $\{C_{48}(1,2,23), C_{48}(2,11,13)\}$ = $T2_{48,2}(C_{48}(2,11,13))$. 
\\
$\Rightarrow$ Type-1 set of $C_{48}(1,2,23)$ is 

$T1_{48}(C_{48}(1,2,23))$ = $\{C_{48}(1,2,23), C_{48}(5,10,19), C_{48}(7,14,17), C_{48}(11,13,22)\}$ 

\hfill = $T1_{48}(C_{48}(5,10,19))$ = $T1_{48}(C_{48}(7,14,17))$ = $T1_{48}(C_{48}(11,13,22))$; 
\\
Type-1 group of $C_{48}(1,2,23)$ is 

$(T1_{48}(C_{48}(1,2,23)), \circ')$ = $(T1_{48}(C_{48}(5,10,19)), \circ')$ 

\hfill = $(T1_{48}(C_{48}(7,14,17)), \circ')$ = $(T1_{48}(C_{48}(11,13,22)), \circ')$  
\\ 
where $T1_{48}(C_{48}(1,2,23))$ = $\{C_{48}(1,2,23), C_{48}(5,10,19), C_{48}(7,14,17), C_{48}(11,13,22)\}$ 

\hfill = $T1_{48}(C_{48}(5,10,19))$ = $T1_{48}(C_{48}(7,14,17))$ = $T1_{48}(C_{48}(11,13,22))$; and
 \\
Type-2 group of $C_{48}(1,2,23)$ w.r.t. $m$ = 2 is 

$(T2_{48,2}(C_{48}(1,2,23)), \circ)$ = $(T2_{48,2}(C_{48}(2,11,13)), \circ)$ 
\\ 
where $T2_{48,2}(C_{48}(1,2,23))$ = $\{C_{48}(1,2,23), C_{48}(2,11,13)\}$ = $T2_{48,2}(C_{48}(2,11,13))$.

\item [\rm (h)] In problem 3.17 in \cite{v2-1}, we have

$T1_{48}(C_{48}(1,4,23))$ = $\{C_{48}(1,4,23), C_{48}(5,19,20), C_{48}(7,17,20), C_{48}(4,11,13)\}$ 

\hfill = $T1_{48}(C_{48}(5,19,20))$ = $T1_{48}(C_{48}(7,17,20))$ = $T1_{48}(C_{48}(4,11,13))$  and

$T2_{48,2}(C_{48}(1,4,23))$ = $\{C_{48}(1,4,23), C_{48}(4,11,13)\}$ = $T2_{48,2}(C_{48}(4,11,13))$. 
\\
$\Rightarrow$ Type-1 set of $C_{48}(1,4,23)$ is 

$T1_{48}(C_{48}(1,4,23))$ = $\{C_{48}(1,4,23), C_{48}(5,19,20), C_{48}(7,17,20), C_{48}(4,11,13)\}$ 

\hfill = $T1_{48}(C_{48}(5,19,20))$ = $T1_{48}(C_{48}(7,17,20))$ = $T1_{48}(C_{48}(4,11,13))$; 
\\
Type-1 group of $C_{48}(1,4,23)$ is 

$(T1_{48}(C_{48}(1,4,23)), \circ')$ = $(T1_{48}(C_{48}(5,19,20)), \circ')$ 

\hfill = $(T1_{48}(C_{48}(7,17,20)), \circ')$ = $(T1_{48}(C_{48}(4,11,13)), \circ')$  
\\ 
where $T1_{48}(C_{48}(1,4,23))$ = $\{C_{48}(1,4,23), C_{48}(5,19,20), C_{48}(7,17,20), C_{48}(4,11,13)\}$ 

\hfill = $T1_{48}(C_{48}(5,19,20))$ = $T1_{48}(C_{48}(7,17,20))$ = $T1_{48}(C_{48}(4,11,13))$; and
 \\
Type-2 group of $C_{48}(1,4,23)$ w.r.t. $m$ = 2 is 

$(T2_{48,2}(C_{48}(1,4,23)), \circ)$ = $(T2_{48,2}(C_{48}(4,11,13)), \circ)$ 
\\ 
where $T2_{48,2}(C_{48}(1,4,23))$ = $\{C_{48}(1,4,23), C_{48}(4,11,13)\}$ = $T2_{48,2}(C_{48}(4,11,13))$.

\item [\rm (i)] In problem 3.17 in \cite{v2-1}, we have

$T1_{48}(C_{48}(1,6,23))$ = $\{C_{48}(1,6,23), C_{48}(5,18,19), C_{48}(6,7,17), C_{48}(11,13,18)\}$ 

\hfill = $T1_{48}(C_{48}(5,18,19))$ = $T1_{48}(C_{48}(6,7,17))$ = $T1_{48}(C_{48}(11,13,18))$  and

$T2_{48,2}(C_{48}(1,6,23))$ = $\{C_{48}(1,6,23), C_{48}(6,11,13)\}$ = $T2_{48,2}(C_{48}(6,11,13))$. 
\\
$\Rightarrow$ Type-1 set of $C_{48}(1,6,23)$ is 

$T1_{48}(C_{48}(1,6,23))$ = $\{C_{48}(1,6,23), C_{48}(5,18,19), C_{48}(6,7,17), C_{48}(11,13,18)\}$ 

\hfill = $T1_{48}(C_{48}(5,18,19))$ = $T1_{48}(C_{48}(6,7,17))$ = $T1_{48}(C_{48}(11,13,18))$; 
\\
Type-1 group of $C_{48}(1,6,23)$ is 

$(T1_{48}(C_{48}(1,6,23)), \circ')$ = $(T1_{48}(C_{48}(5,18,19)), \circ')$ = $(T1_{48}(C_{48}(6,7,17)), \circ')$ 

\hfill = $(T1_{48}(C_{48}(11,13,18)), \circ')$   
\\
where $T1_{48}(C_{48}(1,6,23))$ = $\{C_{48}(1,6,23), C_{48}(5,18,19), C_{48}(6,7,17), C_{48}(11,13,18)\}$ 

\hfill = $T1_{48}(C_{48}(5,18,19))$ = $T1_{48}(C_{48}(6,7,17))$ = $T1_{48}(C_{48}(11,13,18))$; and
 \\
Type-2 group of $C_{48}(1,6,23)$ w.r.t. $m$ = 2 is 

$(T2_{48,2}(C_{48}(1,6,23)), \circ)$ = $(T2_{48,2}(C_{48}(6,11,13)), \circ)$ 
\\ where $T2_{48,2}(C_{48}(1,6,23))$ = $\{C_{48}(1,6,23), C_{48}(6,11,13)\}$ = $T2_{48,2}(C_{48}(6,11,13))$.

\item [\rm (j)] In problem 3.18 in \cite{v2-1}, we have
\\
$T1_{54}(C_{54}(1,3,17,19))$ = $\{C_{54}(1,3,17,19), C_{54}(5,13,15,23), C_{54}(7,11,21,25)\}$  

\hfill = $T1_{54}(C_{54}(5,13,15,23))$ =  $T1_{54}(C_{54}(7,11,21,25))$ and
\\
$T2_{54,3}(C_{54}(1,3,17,19))$ = $\{C_{54}(1,3,17,19), C_{54}(3,7,11,25), C_{54}(3,5,13,23)\}$ 

\hfill  = $T2_{54,3}(C_{54}(3,7,11,25))$ = $T2_{54,3}(C_{54}(3,5,13,23))$. 
\\
$\Rightarrow$ Type-1 set of $C_{54}(1,3,17,19)$ is 

$T1_{54}(C_{54}(1,3,17,19))$ = $\{C_{54}(1,3,17,19), C_{54}(5,13,15,23), C_{54}(7,11,21,25)\}$  

\hfill = $T1_{54}(C_{54}(5,13,15,23))$ =  $T1_{54}(C_{54}(7,11,21,25))$; 
\\
Type-1 group of $C_{54}(1,3,17,19)$ is 

$(T1_{54}(C_{54}(1,3,17,19)), \circ')$ = $(T1_{54}(C_{54}(5,13,15,23)), \circ')$ = $(T1_{54}(C_{54}(7,11,21,25)), \circ')$
\\
where $T1_{54}(C_{54}(1,3,17,19))$ = $\{C_{54}(1,3,17,19), C_{54}(5,13,15,23), C_{54}(7,11,21,25)\}$  

\hfill = $T1_{54}(C_{54}(5,13,15,23))$ =  $T1_{54}(C_{54}(7,11,21,25))$; and
 \\
Type-2 group of $C_{54}(1,3,17,19)$ w.r.t. $m$ = 3 is 

$(T2_{54,3}(C_{54}(1,3,17,19)), \circ)$ = $(T2_{54,3}(C_{54}(3,7,11,25)), \circ)$ = $(T2_{54,3}(C_{54}(3,5,13,23)), \circ)$
\\ 
where $T2_{54,3}(C_{54}(1,3,17,19))$ = $\{C_{54}(1,3,17,19), C_{54}(3,7,11,25), C_{54}(3,5,13,23)\}$ 

\hfill = $T2_{54,3}(C_{54}(3,7,11,25))$ = $T2_{54,3}(C_{54}(3,5,13,23))$. 

\item [\rm (k)] In problem 3.18 in \cite{v2-1}, we have

$T1_{54}(C_{54}(1,6,17,19))$ = $\{C_{54}(1,6,17,19), C_{54}(5,13,23,24), C_{54}(7,11,12,25)\}$ 

\hfill = $T1_{54}(C_{54}((5,13,23,24))$ =  $T1_{54}(C_{54}(7,11,12,25))$ and

$T2_{54,3}(C_{54}(1,6,17,19))$ = $\{C_{54}(1,6,17,19), C_{54}(6,7,11,25), C_{54}(5,6,13,23)\}$ 

\hfill = $T2_{54,3}(C_{54}(6,7,11,25))$ = $T2_{54,3}(C_{54}(5,6,13,23))$. 
\\
$\Rightarrow$ Type-1 set of $C_{54}(1,6,17,19)$ is 

$T1_{54}(C_{54}(1,6,17,19))$ = $\{C_{54}(1,6,17,19), C_{54}(5,13,23,24), C_{54}(7,11,12,25)\}$ 

\hfill = $T1_{54}(C_{54}((5,13,23,24))$ =  $T1_{54}(C_{54}(7,11,12,25))$; 
\\
Type-1 group of $C_{54}(1,6,17,19)$ is 

$(T1_{54}(C_{54}(1,6,17,19)), \circ')$ = $(T1_{54}(C_{54}(5,13,23,24)), \circ')$ = $(T1_{54}(C_{54}(7,11,12,25)), \circ')$
\\
 where $T1_{54}(C_{54}(1,6,17,19))$ = $\{C_{54}(1,6,17,19), C_{54}(5,13,23,24), C_{54}(7,11,12,25)\}$ 

\hfill = $T1_{54}(C_{54}((5,13,23,24))$ =  $T1_{54}(C_{54}(7,11,12,25))$; and
 \\
Type-2 group of $C_{54}(1,6,17,19)$ w.r.t. $m$ = 3 is 

$(T2_{54,3}(C_{54}(1,6,17,19)), \circ)$ = $(T2_{54,3}(C_{54}(6,7,11,25)), \circ)$ = $(T2_{54,3}(C_{54}(5,6,13,23)), \circ)$
\\
 where $T2_{54,3}(C_{54}(1,6,17,19))$ = $\{C_{54}(1,6,17,19), C_{54}(6,7,11,25), C_{54}(5,6,13,23)\}$ 

\hfill = $T2_{54,3}(C_{54}(6,7,11,25))$ = $T2_{54,3}(C_{54}(5,6,13,23))$. 

\item [\rm (l)] In problem 3.18 in \cite{v2-1}, we have
\\
$T1_{54}(C_{54}(1,17,18,19))$ = $\{C_{54}(1,17,18,19), C_{54}(5,13,18,23), C_{54}(7,11,18,25)\}$  

\hfill = $T1_{54}(C_{54}(5,13,18,23))$ =  $T1_{54}(C_{54}(7,11,18,25))$ and
\\
$T2_{54,3}(C_{54}(1,17,18,19))$ = $\{C_{54}(1,3,17,19)\}$. 
\\
$\Rightarrow$ Type-1 set of $C_{54}(1,17,18,19)$ is 

$T1_{54}(C_{54}(1,17,18,19))$ = $\{C_{54}(1,3,17,19), C_{54}(5,13,18,23), C_{54}(7,11,18,25)\}$  

\hfill = $T1_{54}(C_{54}(5,13,18,23))$ =  $T1_{54}(C_{54}(7,11,18,25))$; 
\\
Type-1 group of $C_{54}(1,17,18,19)$ is 

$(T1_{54}(C_{54}(1,17,18,19)), \circ')$ = $(T1_{54}(C_{54}(5,13,18,23)), \circ')$ = $(T1_{54}(C_{54}(7,11,18,25)), \circ')$
\\
 where $T1_{54}(C_{54}(1,17,18,19))$ = $\{C_{54}(1,3,17,19), C_{54}(5,13,18,23), C_{54}(7,11,18,25)\}$  

\hfill = $T1_{54}(C_{54}(5,13,18,23))$ =  $T1_{54}(C_{54}(7,11,18,25))$; and
 \\
Type-2 group of $C_{54}(1,17,18,19)$ w.r.t. $m$ = 3 is 

$(T2_{54,3}(C_{54}(1,17,18,19)), \circ)$ where $T2_{54,3}(C_{54}(1,17,18,19))$ = $\{C_{54}(1,17,18,19)\}$.

\item [\rm (m)] In problem 3.19 in \cite{v2-1}, we have
\\
$T1_{108}(C_{108}(3,5,31,41))$ = $\{C_{108}(3,5,31,41), C_{108}(11,15,25,47), C_{108}(1,21,35,37)$, 

\hfill $C_{108}(17,19,33,53)$,  $ C_{108}(7,29,39,43), C_{108}(13,23,49,51)\}$ 

\hfill = $T1_{108}(C_{108}(11,15,25,47))$  =  $T1_{108}(C_{108}(1,21,35,37))$ =  $T1_{108}(C_{108}(17,19,33,53))$ 

\hfill  =  $T1_{108}(C_{108}(7,29,39,43))$ =  $T1_{108}(C_{108}(13,23,49,51))$ and
\\
$T2_{108,3}(C_{108}(3,5,31,41))$ = $\{C_{108}(3,5,31,41), C_{108}(3,7,29,43),  C_{108}(3,17,19,53)\}$ 

\hfill = $T2_{108,3}(C_{108}(3,7,29,43))$ = $T2_{108,3}(C_{108}(3,17,19,53))$. 
\\
$\Rightarrow$ Type-1 set of $C_{108}(3,5,31,41)$ is 

$T1_{108}(C_{108}(3,5,31,41))$ = $\{C_{108}(3,5,31,41), C_{108}(11,15,25,47), C_{108}(1,21,35,37)$, 

\hfill $C_{108}(17,19,33,53)$, $ C_{108}(7,29,39,43), C_{108}(13,23,49,51)\}$ 

\hfill = $T1_{108}(C_{108}(11,15,25,47))$  =  $T1_{108}(C_{108}(1,21,35,37))$ =  $T1_{108}(C_{108}(17,19,33,53))$

\hfill   =  $T1_{108}(C_{108}(7,29,39,43))$ =  $T1_{108}(C_{108}(13,23,49,51))$; 
\\
Type-1 group of $C_{108}(3,5,31,41)$ is 

$(T1_{108}(C_{108}(3,5,31,41)), \circ')$ = $(T1_{108}(C_{108}(11,15,25,47)), \circ')$ 

\hfill = $(T1_{108}(C_{108}(1,21,35,37)), \circ')$ = $(T1_{108}(C_{108}(17,19,33,53)), \circ')$ 

\hfill = $(T1_{108}(C_{108}(7,29,39,43)), \circ')$ = $(T1_{108}(C_{108}(13,23,49,51)), \circ')$ where
\\
$T1_{108}(C_{108}(3,5,31,41))$ = $\{C_{108}(3,5,31,41), C_{108}(11,15,25,47), C_{108}(1,21,35,37)$, 

\hfill $C_{108}(17,19,33,53)$, $ C_{108}(7,29,39,43), C_{108}(13,23,49,51)\}$ 

\hfill = $T1_{108}(C_{108}(11,15,25,47))$  =  $T1_{108}(C_{108}(1,21,35,37))$ =  $T1_{108}(C_{108}(17,19,33,53))$ 

\hfill  =  $T1_{108}(C_{108}(7,29,39,43))$ =  $T1_{108}(C_{108}(13,23,49,51))$; and
 \\
Type-2 group of $C_{108}(3,5,31,41)$ w.r.t. $m$ = 3 is 
\\
$(T2_{108,3}(C_{108}(3,5,31,41)), \circ)$ = $(T2_{108,3}(C_{108}(3,7,29,43)), \circ)$ = $(T2_{108,3}(C_{108}(3,17,19,53)), \circ)$ 
\\
where $T2_{108,3}(C_{108}(3,5,31,41))$ = $\{C_{108}(3,5,31,41), C_{108}(3,7,29,43),  C_{108}(3,17,19,53)\}$ 

\hfill = $T2_{108,3}(C_{108}(3,7,29,43))$ = $T2_{108,3}(C_{108}(3,17,19,53))$.

\item [\rm (n)] In problem 3.19 in \cite{v2-1}, we have
\\
$T1_{108}(C_{108}(5,12,31,41))$ = $\{C_{108}(5,12,31,41), C_{108}(11,25,47,48), C_{108}(1,24,35,37)$, 

\hfill $C_{108}(17,19,24,53), C_{108}(7,29,43,48), C_{108}(12,13,23,49)\}$ 

\hfill = $T1_{108}(C_{108}(11,25,47,48))$ =  $T1_{108}(C_{108}(1,24,35,37))$ =  $T1_{108}(C_{108}(17,19,24,53))$ 

\hfill =  $T1_{108}(C_{108}(7,29,43,48))$ =  $T1_{108}(C_{108}(12,13,23,49))$ and
\\
$T2_{108,3}(C_{108}(5,12,31,41))$ = $\{C_{108}(5,12,31,41), C_{108}(7,12,29,43),  C_{108}(12,17,19,53)\}$ 

\hfill = $T2_{108,3}(C_{108}(7,12,29,43))$ = $T2_{108,3}(C_{108}(12,17,19,53))$. 
\\
$\Rightarrow$ Type-1 set of $C_{108}(5,12,31,41)$ is 

$T1_{108}(C_{108}(1,17,18,19))$ = $\{C_{108}(1,3,17,19), C_{108}(5,13,18,23), C_{108}(7,11,18,25)\}$  

\hfill = $T1_{108}(C_{108}(5,13,18,23))$ =  $T1_{108}(C_{108}(7,11,18,25))$; 
\\
Type-1 group of $C_{108}(5,12,31,41)$ is 

$(T1_{108}(C_{108}(5,12,31,41)), \circ')$ = $(T1_{108}(C_{108}(11,25,47,48)), \circ')$ 

\hfill = $(T1_{108}(C_{108}(1,24,35,37)), \circ')$ = $(T1_{108}(C_{108}(17,19,24,53)), \circ')$ 

\hfill = $(T1_{108}(C_{108}(7,29,43,48)), \circ')$ = $(T1_{108}(C_{108}(12,13,23,49)), \circ')$

\hfill where $T1_{108}(C_{108}(5,12,31,41))$ = $\{C_{108}(5,12,31,41), C_{108}(11,25,47,48), C_{108}(1,24,35,37)$, 

\hfill $C_{108}(17,19,24,53), C_{108}(7,29,43,48), C_{108}(12,13,23,49)\}$ 

\hfill = $T1_{108}(C_{108}(11,25,47,48))$ =  $T1_{108}(C_{108}(1,24,35,37))$ =  $T1_{108}(C_{108}(17,19,24,53))$ 

\hfill =  $T1_{108}(C_{108}(7,29,43,48))$ =  $T1_{108}(C_{108}(12,13,23,49))$; and
 \\
Type-2 group of $C_{108}(5,12,31,41)$ w.r.t. $m$ = 3 is 

$(T2_{108,3}(C_{108}(5,12,31,41)), \circ)$ =  $(T2_{108,3}(C_{108}(7,12,29,43)), \circ)$ 

\hfill = $(T2_{108,3}(C_{108}(12,17,19,53)), \circ)$
\\
where $T2_{108,3}(C_{108}(5,12,31,41))$ = $\{C_{108}(5,12,31,41), C_{108}(7,12,29,43),  C_{108}(12,17,19,53)\}$ 

\hfill = $T2_{108,3}(C_{108}(7,12,29,43))$ = $T2_{108,3}(C_{108}(12,17,19,53))$. 

\item [\rm (o)] In problem 3.19 in \cite{v2-1}, we have
\\
$T1_{108}(C_{108}(5,18,31,41))$ = $\{C_{108}(5,18,31,41), C_{108}(11,18,25,47), C_{108}(1,18,35,37)$, 

\hfill $C_{108}(17,18,19,53), C_{108}(7,18,29,43), C_{108}(13,18,23,49)\}$ 

\hfill = $T1_{108}(C_{108}(11,18,25,47))$ =  $T1_{108}(C_{108}(1,18,35,37))$ =  $T1_{108}(C_{108}(17,18,19,53))$ 

\hfill =  $T1_{108}(C_{108}(7,18,29,43))$ =  $T1_{108}(C_{108}(13,18,23,49))$  and
\\
$T2_{108,3}(C_{108}(5,18,31,41))$ = $\{C_{108}(5,18,31,41)\}$. 
\\
$\Rightarrow$ Type-1 set of $C_{108}(5,18,31,41)$ is 

$T1_{108}(C_{108}(5,18,31,41))$ = $\{C_{108}(5,18,31,41), C_{108}(11,18,25,47), C_{108}(1,18,35,37)$, 

\hfill $C_{108}(17,18,19,53), C_{108}(7,18,29,43), C_{108}(13,18,23,49)\}$ 

\hfill = $T1_{108}(C_{108}(11,18,25,47))$ =  $T1_{108}(C_{108}(1,18,35,37))$ =  $T1_{108}(C_{108}(17,18,19,53))$ 

\hfill =  $T1_{108}(C_{108}(7,18,29,43))$ =  $T1_{108}(C_{108}(13,18,23,49))$; and 
\\
Type-1 group of $C_{108}(5,18,31,41)$ is 

$(T1_{108}(C_{108}(5,18,31,41)), \circ')$ = $(T1_{108}(C_{108}(11,18,25,47)), \circ')$ 

 = $(T1_{108}(C_{108}(1,18,35,37)), \circ')$ = $(T1_{108}(C_{108}(17,18,19,53)), \circ')$ 

 = $(T1_{108}(C_{108}(7,18,29,43)), \circ')$ = $(T1_{108}(C_{108}(13,18,23,49)), \circ')$
\\
where $T1_{108}(C_{108}(5,18,31,41))$ = $\{C_{108}(5,18,31,41), C_{108}(11,18,25,47), C_{108}(1,18,35,37)$, 

\hfill $C_{108}(17,18,19,53), C_{108}(7,18,29,43), C_{108}(13,18,23,49)\}$ 

\hfill = $T1_{108}(C_{108}(11,18,25,47))$ =  $T1_{108}(C_{108}(1,18,35,37))$ =  $T1_{108}(C_{108}(17,18,19,53))$ 

\hfill =  $T1_{108}(C_{108}(7,18,29,43))$ =  $T1_{108}(C_{108}(13,18,23,49))$; and
 \\
Type-2 group of $C_{108}(5,18,31,41)$ w.r.t. $m$ = 3 is 

$(T2_{108,3}(C_{108}(5,18,31,41)), \circ)$ where $T2_{108,3}(C_{108}(5,18,31,41))$ = $\{C_{108}(5,18,31,41)\}$.  \hfill $\Box$
\end{enumerate}

\begin{prm} \label{p6.4} {\rm  For $i$ = 1 to 7, 
\\
(a) find $T2_{1715,7}(C_{1715}(G_i))$ and
\\
(b) prove that $(T2_{1715,7}(C_{1715}(G_i)), \circ)$ is an Abelian group where 

$G_1$ = $\{7,17,228,262,473,507,718,752\}$,

$G_2$ = $\{7,122,123,367,368,612,613,857\}$,  

$G_3$ = $\{7,18,227,263,472,508,717,753\}$,

$G_4$ = $\{7,87,158,332,403,577,648,822\}$,

$G_5$ = $\{7,53,192,298,437,543,682,788\}$,

$G_6$ = $\{7,52,193,297,438,542,683,787\}$ and

$G_7$ = $\{7,88,157,333,402,578,647,823\}$.}  
\end{prm}
\noindent
{\bf Solution} Here, 1715 = $5\times {7}^3$ = $np^3$ and all the 7 sets $G_1$, $G_2$, . . . , $G_7$ are subsets of $\mathbb{Z}_{1715/2}$ and have 7 as a common element where $n$ = 5, $p$ = 7 and $np^2$ = 245. Thus, there is a possibility that $C_n(G_i)$ may be of Type-2 isomorphic circulant graphs of order 1715 = $5\times {7}^3$ w.r.t.  $m$ = 7, $1 \leq i \leq 7$. Using Theorem \ref{t5.12}, we show that circulant graphs $C_{1715}(G_i)$ are isomorphic and of Type-2 w.r.t.  $m$ = 7 = $p$,  $1 \leq i \leq p$ = 7.  

The minimum jump size, other than 7, in the seven $G_i$ sets is 17 which implies, 17 = $x+yp$ = $3 + 2\times 7$ which implies, $x = 3$ and $y = 2$. See Theorem \ref{t5.12}. Thus, 

$d^{np^3,x+yp}_i$ = $d^{5\times {7}^3,3+2\times 7}_i$ = $(i-1)xpn+yp+x$ = $105(i-1)+17$, $i = $ 1 to 7. This implies, 

$d^{5\times {7}^3,3+2\times 7}_1$ = $17\in G_1$; $d^{5\times {7}^3,3+2\times 7}_2$ = $122\in G_2$; $d^{5\times {7}^3,3+2\times 7}_3$ = $227\in G_3$; 

$d^{5\times {7}^3,3+2\times 7}_4$ = $332\in G_4$; $d^{5\times {7}^3,3+2\times 7}_5$ = $437\in G_5$; $d^{5\times {7}^3,3+2\times 7}_6$ = $542\in G_6$ and

$d^{5\times {7}^3,3+2\times 7}_7$ = $647\in G_7$.
\\
For $i =$ 1 to $p$, we have 

$R^{np^3,yp+x}_i$ = $\{p,$ $d^{np^3,yp+x}_i$, $np^2-d^{np^3,yp+x}_i$, $np^2+d^{np^3,yp+x}_i$, $2np^2-d^{np^3,yp+x}_i$, $2np^2+d^{np^3,yp+x}_i$,

\hfill . . . , $(p-1)np^2-d^{np^3,yp+x}_i$, $(p-1)np^2+d^{np^3,yp+x}_i$, $np^3-d^{np^3,yp+x}_i$, $np^3-p\}$. 
\\
This implies, under $modulo$ 1715,

$R^{5\times 7^3,2\times 7+3}_1$ = $\{7, 17, 228, 262, 473, 507, 718, 752, $ 

\hfill $963, 997, 1208, 1242, 1453, 1487, 1698, 1708\}$ = $G_1 \cup (1715 - G_1)$; 

$R^{5\times 7^3,2\times 7+3}_2$ = $\{7, 122, 123, 367, 368, 612, 613, 857, $ 

\hfill $858, 1102, 1103, 1347, 1348, 1592, 1593, 1708\}$ = $G_2 \cup (1715 - G_2)$; 

$R^{5\times 7^3,2\times 7+3}_3$ = $\{7, 227, 18, 472, 263, 717, 508, 962, 753, 1207, 998,$

\hfill $1452, 1243, 1697, 1488, 1708\}$ = $G_3 \cup (1715 - G_3)$; 

$R^{5\times 7^3,2\times 7+3}_4$ = $\{7, 332, (245-332)+1715 = 1628, 577, 158, 822, 403, 1067,$ 

\hfill $648, 1312, 893, 1557, 1138,  1802 = 87, 1383, 1708\}$ = $G_4 \cup (1715 - G_4)$; 

$R^{5\times 7^3,2\times 7+3}_5$ = $\{7, 437, (245-437)+1715 = 1523, 682, 53, 927,$ 

\hfill  $298, 1172, 543, 1417, 788, 1662, 1033, 192, 1278, 1708\}$ = $G_5 \cup (1715 - G_5)$; 

$R^{5\times 7^3,2\times 7+3}_6$ = $\{7, 542, (245-542)+1715 = 1418, 787, 1663, 1032,$ 

\hfill $ 193, 1277, 438, 1522, 683, 52, 928, 297, 1173, 1708\}$ = $G_6 \cup (1715 - G_6)$;

$R^{5\times 7^3,2\times 7+3}_7$ = $\{7, 647, (245-647)+1715 = 1313, 892, 1558, 1137,$ 

\hfill $88, 1382, 333, 1627, 578, 157, 823, 402, 1068, 647, 1708\}$ = $G_7 \cup (1715 - G_7)$. 

\vspace{.2cm}
\noindent
{\bf{\it Claim 1.}} $\Theta_{1715,7,n}(R^{1715,17}_1)$ = $R^{1715,17}_2$, $n$ = 5.   

\vspace{.2cm}
\noindent
$\Theta_{1715,7,n}(R^{1715,17}_1)$ = $\Theta_{1715,7,5}(R^{1715,17}_1)$ = $\Theta_{1715,7,5}(\{7, 17, 228, 262, 473,$ 

~\hfill $507, 718, 752, 963, 997, 1208, 1242, 1453, 1487, 1698, 1708 \})$ 

 = $\Theta_{1715,7,5}(\{7, 1708\})$ $\bigcup$ $\Theta_{1715,7,5}(\{17, 262, 507, 752, 997, 1242, 1487\})$ 

~\hfill $\bigcup$ $\Theta_{1715,7,5}(\{228, 473, 718, 963, 1208, 1453, 1698\})$

= $\{7, 1708\} \bigcup (3\times 7 \times 5+\{17, 262, 507, 752, 997, 1242, 1487\})$

~\hfill $\bigcup$ $(4\times 7 \times 5+\{228, 473, 718, 963, 1208, 1453, 1698\})$ 

 = $\{7, 1708\}$ $\bigcup$ $\{122, 367, 612, 857, 1102, 1347, 1592\} 
\bigcup \{368, 613, 858, 1103, 1348, 1593, 123\}$ = $R^{1715,17}_2$.

Hence Claim 1 is true.

\vspace{.2cm}
\noindent
{\it Claim 2.} $\Theta_{1715,7,4n}(R^{1715,17}_1)$ = $R^{1715,17}_5$, $n = 5$.   

\vspace{.2cm}
\noindent
 $\Theta_{1715,7,4n}(R^{1715,17}_1)$ = $\Theta_{1715,7,20}(\{7, 1708\}) \bigcup \Theta_{1715,7,20}(\{17, 262, 507, 752, 997, 1242, 1487\})$ 

~\hfill $\bigcup$ $\Theta_{1715,7,20}(\{228, 473, 718, 963, 1208, 1453, 1698\})$

 = $\{7, 1708\}$ $\bigcup$ $(3\times 20 \times 7+\{17, 262, 507, 752, 997, 1242, 1487\})$

~\hfill $\bigcup$ $(4\times 20 \times 7+\{228, 473, 718, 963, 1208, 1453, 1698\})$ 

 = $\{7, 1708\}$ $\bigcup$ $\{437, 682, 927, 1172, 1417, 1662, 1907 = 192\}$ 

~\hfill $\bigcup$ $\{788, 1033, 1278, 1523, 1768 = 53, 2013 = 298, 543\}$ = $R^{1715,17}_5$.

Hence Claim 2 is true.

$\Rightarrow$ $\Theta_{1715,7,n}(C_{1715}(G_1))$ = $\Theta_{1715,7,n}(C_{1715}(R^{1715,17}_1))$ = $C_{1715}(R^{1715,17}_2)$ = $C_{1715}(G_2)$ and 

\hspace{.25cm} $\Theta_{1715,7,4n}(C_{1715}(G_1))$ = $\Theta_{1715,7,4n}(C_{1715}(R^{1715,17}_1))$ = $C_{1715}(R^{1715,17}_5)$ = $C_{1715}(G_5)$.

$\Rightarrow$ $C_{1715}(G_1)$, $C_{1715}(G_2)$ and $C_{1715}(G_5)$ are isomorphic graphs.

Similarly, we can show that $\Theta_{1715,7,jn}(C_{1715}(G_1))$ = $C_{1715}(G_{j+1})$ as well as $\Theta_{1715,7,jn}(C_{1715}(G_i))$ = $C_{1715}(G_{i+j})$ where $i+j$ in $G_{i+j}$ is calculated under addition modulo 7, $0 \leq i,j \leq 6$ and $G_0$ = $G_7$. This implies, $C_{1715}(G_i)$ are isomorphic circulant graphs for $i$ = 1 to 7.

Also, for $1 \leq i,j \leq 7$, $i \neq j$, $1 < s < 6$ and $s\in\varphi_{7}$, $sR^{1715,17}_i$ $\neq$ $R^{1715,17}_i, R^{1715,17}_j$ since $7\in R^{1715,17}_i$ but $7\notin sR^{1715,17}_i$ whereas $7s\in sR^{1715,17}_i$ but $7s \notin R^{1715,17}_i$. This implies, $C_{1715}(R^{1715,17}_i)$ and $C_{1715}(R^{1715,17}_j)$ are not Adam's isomorphic when $i \neq j$ and $1 \leq i,j \leq 7$. This implies, $C_{1715}(G_i)$ and $C_{1715}(G_{j})$ are isomorphic but not of Type-1 isomorphic when $i \neq j$ and $1 \leq i,j \leq 7$. 

This implies, $\Theta_{1715,7,jn}(C_{1715}(G_1))$ = $\Theta_{1715,7,jn}(C_{1715}(R^{1715,17}_1))$ = $C_{1715}(R^{1715,17}_{j+1})$ = $C_{1715}(G_{j+1})$ are isomorphic circulant graphs of Type-2 w.r.t.  $r = 7$, $0 \leq j \leq p-1 = 6$.  

Then the result follows from Theorem \ref{t5.12}. \hfill $\Box$

\section{Conclusion}

In Theorem \ref{t5.12}, we have $T2_{n,r}(C_{n}(R)) \subseteq V_{n,r}(C_{n}(R))$ for any circulant graph $C_{n}(R)$. It is observed that $T2_{54,3}(C_{54}(2,3,16,20)) \subsetneq V_{54,3}(C_{54}(2,3,16,20))$ in problem \ref{p1} and $T2_{54,3}(C_{54}(3,7,20,34)) \subsetneq V_{54,3}(C_{54}(3,7,20,34))$ in problem \ref{p2}. This need not be the case always. In problem 3.16 in \cite{v2-1}, we have $\theta_{27,3,1}(C_{27}(1,3,8,10))$ = $C_{27}(3,4,5,13)$ and $\theta_{27,3,2}(C_{27}(1,3,8,10))$ = $C_{27}(2,3,7,11)$ and thereby, $T2_{27,3}(C_{27}(1,3, ~8,10))$ = $\{C_{27}(1,3,8,10)$, $C_{27}(3,4,5,13)$, $C_{27}(2,3,7,11)\}$ = $V_{27,3}(C_{27}(1,3, ~8,10))$ = $\{\theta_{27,3,t}(C_{27}(1,3,8,10)):$ $0 \leq t \leq \frac{27}{3}-1 = 8\}$ = $\{\theta_{27,3,t}(C_{27}(1,3,8,10)):$ $0 \leq t \leq 2\}$. Based on the above observation, we propose the following open problems related to  $T2_{n,m}(C_{n}(R))$ and $V_{n,m}(C_{n}(R))$ for further research.

\begin{oprm}\quad {\rm \label{op1} Characterise and find all circulant graphs $C_n(R)$ such that  

(i) $T2_{n,r}(C_{n}(R))$ = $\{C_{n}(R)\}$. $i.e.$, $|T2_{n,r}(C_{n}(R))|$ = 1;  

(ii)  $T2_{n,r}(C_{n}(R))$ $\neq$ $\{C_{n}(R)\}$. $i.e.$,  $|T2_{n,r}(C_{n}(R))| > 1$. \hfill $\Box$}
\end{oprm}

\begin{oprm}\quad {\rm \label{op2} Characterise and find all circulant graphs $C_n(R)$ such that  

(i) $T2_{n,r}(C_{n}(R))$ = $V_{n,r}(C_{n}(R))$;  

(ii)  $T2_{n,r}(C_{n}(R))$ $\neq$ $V_{n,r}(C_{n}(R))$. $i.e.$,  $T2_{n,r}(C_{n}(R)) \subset V_{n,r}(C_{n}(R))$. \hfill $\Box$}
\end{oprm}

\vspace{.1cm}
\noindent
\textbf{Declaration of competing interest}\quad 
The authors declare that they have no conflict of interest.

\begin {thebibliography}{10}
\bibitem {ad67}  
A. Adam, 
{\it Research problem 2-10},  
J. Combinatorial Theory, {\bf 3} (1967), 393.

\bibitem {v96} 
V. Vilfred, 
{\it $\sum$-labelled Graphs and Circulant Graphs}, 
Ph.D. Thesis, University of Kerala, Thiruvananthapuram, Kerala, India (1996). 

\bibitem {v17} 
V. Vilfred, 
{\it A Study on Isomorphic Properties of Circulant Graphs:~ Self-complimentary, isomorphism, Cartesian product and factorization},  
Advances in Science, Technology and Engineering Systems (ASTES) Journal, \textbf{2 (6)} (2017), 236--241. DOI: 10.25046/ aj020628. ISSN: 2415-6698.

\bibitem {v13} 
V. Vilfred, 
{\it A Theory of Cartesian Product and Factorization of Circulant Graphs},  
Hindawi Pub. Corp. - J. Discrete Math.,  \textbf{Vol. 2013}, Article~ ID~ 163740, 10 pages.

\bibitem {v24} 
V. Vilfred Kamalappan, 
\emph{A study on Type-2 Isomorphic Circulant Graphs and related Abelian Groups}, 
arXiv: 2012.11372v11 [math.CO] (26 Nov. 2024), 183 pages.

\bibitem {v2-2-arX} 
V. Vilfred Kamalappan, 
\emph{All Type-2 Isomorphic Circulant Graphs of $C_{16}(R)$ and $C_{24}(S)$}, 
arXiv:2508.09384 [math.CO] (12 Aug. 2025), 28 pages.

\bibitem {v20} 
V. Vilfred Kamalappan, 
\emph{ New Families of Circulant Graphs Without Cayley Isomorphism Property with $r_i = 2$},
Int. Journal of Applied and Computational Mathematics, (2020) 6:90, 34 pages. https://doi.org/10.1007/s40819-020-00835-0. Published online: 28.07.2020 by Springer.

\bibitem {v2-1} 
V. Vilfred Kamalappan, 
\emph{A study on Type-2 Isomorphic Circulant Graphs. \\ Part 1: Type-2 isomorphic circulant graphs $C_n(R)$ w.r.t. $m$ = 2}. 
Preprint. 31 pages

\bibitem {v2-2} 
V. Vilfred Kamalappan, 
\emph{A study on Type-2 isomorphic circulant graphs. \\ Part 2: Type-2 isomorphic circulant graphs of orders 16, 24, 27}. 
Preprint. 32 pages

\bibitem {v2-3} 
V. Vilfred Kamalappan, 
\emph{A study on Type-2 isomorphic circulant graphs. \\ Part 3: 384 pairs of Type-2 isomorphic circulant graphs $C_{32}(R)$}. 
Preprint. 42 pages

\bibitem {v2-4} 
V. Vilfred Kamalappan, 
\emph{A study on Type-2 isomorphic circulant graphs. \\ Part 4: 960 triples of Type-2 isomorphic circulant graphs $C_{54}(R)$}. 
Preprint. 76 pages

\bibitem {v2-5} 
V. Vilfred Kamalappan, 
\emph{A study on Type-2 isomorphic circulant graphs. \\ Part 5: Type-2 isomorphic circulant graphs of orders 48, 81, 96}. 
Preprint. 33 pages

\bibitem {v2-6} 
V. Vilfred Kamalappan, 
\emph{A study on Type-2 Isomorphic Circulant Graphs. \\ Part 6: Abelian groups $(T2_{n, m}(C_n(R)), \circ)$ and $(V_{n, m}(C_n(R)), \circ)$}. 
Preprint. 19 pages

\bibitem {v2-7} 
V. Vilfred Kamalappan, 
\emph{A study on Type-2 Isomorphic Circulant Graphs. \\ Part 7: Isomorphism series, digraph and graph of $C_n(R)$}. 
Preprint. 54 pages

\bibitem {v2-8} 
V. Vilfred Kamalappan, 
\emph{A Study on Type-2 Isomorphic Circulant Graphs: Part 8: $C_{432}(R)$, $C_{6750}(S)$ - each has 2 types of Type-2 isomorphic circulant graphs}. 
Preprint. 99 pages

\bibitem {v2-9} 
V. Vilfred Kamalappan and P. Wilson, 
\emph{A study on Type-2 Isomorphic Circulant Graphs. \\ Part 9: Computer program to show Type-1 and -2 isomorphic circulant graphs}. 
Preprint. 21 pages

\bibitem {v2-10} 
V. Vilfred Kamalappan and P. Wilson, 
\emph{A study on Type-2 Isomorphic Circulant Graphs. \\ Part 10: Type-2 isomorphic  $C_{np^3}(R)$ w.r.t. $m$ = $p$ and related groups}. 
Preprint. 20 pages

\end{thebibliography}


\end{document}